\documentclass[reqno,12pt]{amsart}
\usepackage[utf8]{inputenc}
\usepackage[T1]{fontenc}
\usepackage{lmodern}
\usepackage{fancyhdr}
\usepackage{amsmath,amssymb,amsfonts}
\usepackage{bm}

\newcommand{\reset}{\setcounter{equation}{0}}

\usepackage{graphicx}
\graphicspath{{figures/}}
\usepackage{subcaption}
\usepackage{xcolor}

\usepackage{array}
\usepackage{longtable}

\usepackage[colorlinks=true,linkcolor=black,citecolor=red,urlcolor=blue]{hyperref}
\usepackage{cite}

\usepackage{enumitem}
\usepackage{float}
\usepackage{caption}

\AtBeginDocument{%
	\setlength{\abovedisplayskip}{8pt plus 2pt minus 1pt}%
	\setlength{\belowdisplayskip}{8pt plus 2pt minus 1pt}%
	\setlength{\abovedisplayshortskip}{5pt plus 2pt minus 1pt}%
	\setlength{\belowdisplayshortskip}{7pt plus 2pt minus 1pt}%
}
\theoremstyle{plain}  
\newtheorem{theorem}{Theorem}[section]
\newtheorem{lemma}[theorem]{Lemma}

\newtheorem{conjecture}[theorem]{Conjecture}

\theoremstyle{definition}

\newtheorem{problem}{Problem}[section]

\newtheorem{remark}{Remark}[section]

\theoremstyle{remark}

\makeatletter
\renewcommand{\thm@space@setup}{%
	\thm@preskip=9pt plus 2pt minus 1pt
	\thm@postskip=9pt plus 2pt minus 1pt}
\makeatother

\title{Concavity Properties of Robin Solutions on $C^{3,1}$ Uniformly Convex Domains}
\author{Dong Ye}
\author{Dekai Zhang}
\address{School of Mathematical Sciences\\ Key Laboratory of Mathematics and Engineering Applications (Ministry of Education)\\Shanghai Key Laboratory of PMMP \\East China Normal University, Shanghai 200241, China \\
	Email: dye@math.ecnu.edu.cn
}
\address{School of Mathematical Sciences\\ Key Laboratory of Mathematics and Engineering Applications (Ministry of Education) \\Shanghai Key Laboratory of PMMP\\East China Normal University, Shanghai 200241, China\\
	Email: dkzhang@math.ecnu.edu.cn}
	\makeatletter
	\renewcommand{\@settitle}{%
		\begin{center}
			\baselineskip14\p@\relax
			\bfseries
			\@title
		\end{center}
	}
	\makeatother
	\makeatletter
	\renewcommand{\section}{\@startsection{section}{1}%
		\z@{.7\linespacing\@plus\linespacing}{.5\linespacing}%
		{\normalfont\bfseries\centering}}
	\makeatother
\begin{document}
	\pagestyle{plain}
	\begin{abstract}
		We prove that, on a bounded uniformly convex domain of class
		$C^{3,1}$, the first Robin eigenfunction is strictly log-concave and the
		Robin torsion function is strictly $\frac12$-concave for all sufficiently
		large Robin parameters. This means that Conjecture~1.1 of
		Andrews, Clutterbuck and Hauer in \cite{AndrewsClutterbuckHauer2020} holds for uniformly convex $C^{3,1}$ domains in any dimension, and settles 
		an open
		 problem posed by Crasta and Fragal\`a \cite{CrastaFragala2021} when the domain has $C^{3,1}$ regularity. Our proof derives suitable uniform
$C^2$-decay estimates by maximum principle arguments, thereby
removing the higher-order boundary regularity assumption in \cite{CrastaFragala2021}.
	\end{abstract}
\vspace*{-8mm}
	\maketitle
\vspace*{-6mm}
		\noindent\textbf{Keywords.} Robin problems; first eigenfunction; torsion function; concavity.

	\noindent\textbf{2020 MSC.}
	 35E10, 35B40, 35B45, 35J25.
	\par\medskip
	
	\section{Introduction}

The convexity of level sets of solutions to elliptic equations is a classical problem in partial differential equations. For Dirichlet problems in convex domains, this question has a long history.  Stronger notions include log-concavity and $p$-concavity. For the Laplacian, two basic examples are the torsion function and the first Dirichlet eigenfunction. Brascamp and Lieb \cite{BrascampLieb1976} proved the log-concavity of the first Dirichlet eigenfunction. For the torsion problem, Makar-Limanov \cite{MakarLimanov1971} proved the $\frac12$-concavity in dimension two, and Kennington \cite{Kennington1985} extended it to arbitrary dimension. 
Both problems were studied for the $2$-Hessian equation, see  \cite{MaXu2008, LiuMaXu2010, Salani2012,BianchiniSalani2013}  in $\mathbb{R}^3$. Very recently,  Li, Ma, Qiu and Salani
\cite{LiMaQiuSalani2026} extended these results for $2$-Hessian in higher dimensions
$n\geq4$.

There are two main approaches to convexity problems of this type. The macroscopic convexity principle is based on the maximum principle  for suitable two-point functions; see Caffarelli and Spruck \cite{CaffarelliSpruck1982}, Korevaar \cite{Korevaar1983}, and Kennington \cite{Kennington1985}. The microscopic convexity principle is based on the method of continuity and constant rank theorems. Caffarelli and Friedman \cite{CaffarelliFriedman1985} developed this approach for semilinear equations in dimension two, and Korevaar and Lewis \cite{KorevaarLewis1987} extended it to higher dimensions. Guan and Ma \cite{GuanMa2003} developed a corresponding constant rank method for fully nonlinear  equations. We refer interested readers to Chen and Ma \cite{ChenMa2018} for more references.

Lions \cite{Lions1981} raised the question whether positive solutions of semilinear Dirichlet problem in convex domains have always convex superlevel sets. Hamel, Nadirashvili and Sire \cite{HamelNadirashviliSire2016} constructed a counterexample in dimension two, hence the quasiconcavity of positive solution may fail for general semilinear Dirichlet problems in convex domains. Zhang \cite{Zhang2026StrictlyStable} recently shows such failure even for minimal, strictly stable, positive solutions on smooth uniformly convex planar domains.

For the Robin problems, much less is known. Let $\Omega\subset\mathbb R^n$, $n\geq2$, be a bounded domain, let $\nu$
	be its outward unit normal, and let $\alpha>0$. We consider the Robin
	torsion problem
	\begin{equation}\label{RobinTorsion}
		\begin{cases}
		-\Delta u^{T,\alpha}=1 & \text{in }\Omega,\\
		\partial_\nu u^{T,\alpha}+\alpha u^{T,\alpha}=0
		& \text{on }\partial\Omega,
		\end{cases}
	\end{equation}
	and the first Robin eigenvalue problem
	\begin{equation}\label{RobinEigen}
		\begin{cases}
		-\Delta u^{E,\alpha}=\lambda^\alpha u^{E,\alpha}
		& \text{in }\Omega,\\
		\partial_\nu u^{E,\alpha}+\alpha u^{E,\alpha}=0
		& \text{on }\partial\Omega.
		\end{cases}
	\end{equation}
	The eigenfunction $u^{E,\alpha}$ is chosen positive and normalized by
	$\|u^{E,\alpha}\|_{L^2(\Omega)}=1$. For the Robin problem and its spectrum, see
	\cite{BucurFreitasKennedy2017,Laugesen2019}.

	In contrast with the Dirichlet problem, a positive first Robin eignefunction does not always preserve log-concavity. 
	In their seminal work, Andrews, Clutterbuck and Hauer
\cite{AndrewsClutterbuckHauer2020} constructed smooth convex domains
for which the first Robin eigenfunction has nonconvex superlevel sets,
and hence is not quasiconcave, when $\alpha>0$ is sufficiently small.
 The argument in the proof of
\cite[Theorem~1.3]{AndrewsClutterbuckHauer2020} also implies that  Robin torsion functions may
have nonconvex superlevel sets for sufficiently small positive
$\alpha$ on bounded convex domains. Meanwhile, they proposed the following 
	conjecture.
	\begin{conjecture}
	\label{conj:ACH}
		For a bounded convex domain $\Omega\subset\mathbb R^n$, there
		exists $\alpha_0=\alpha_0(\Omega)>0$ such that the first Robin
		eigenfunction is log-concave for all $\alpha\geq\alpha_0$.
	\end{conjecture}
   The intuitive argument is that as $\alpha$ goes to infinity, the solutions of the Robin problem converge  to the corresponding Dirichlet solutions. For example, the normalized positive solutions to \eqref{RobinEigen} would converge to the normalized first Dirichlet eigenfunction. Crasta and Fragal\`a proved the conjecture under a boundary regularity assumption whose order increases with the dimension.
	\begin{theorem}[Theorems 1.1-1.2 in \cite{CrastaFragala2021}
	\label{thm:CF-published}]
		Let $\Omega\subset\mathbb R^n$ be bounded, uniformly convex, and
		$\partial\Omega\in C^m$, with
		$\lfloor m-n/2\rfloor\geq4$.
		Then there exists $\alpha_0>0$ such that, for every
		$\alpha\geq\alpha_0$,
		\begin{enumerate}[label=\textup{(\roman*)}]
			\item the first Robin eigenfunction is strictly log-concave;
			\item the Robin torsion function is strictly
			$\frac12$-concave.
		\end{enumerate}
	\end{theorem}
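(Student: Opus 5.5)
The plan is the perturbative route of Crasta and Fragal\`a: treat the Robin solution as a small $C^2$ perturbation of the Dirichlet solution and use the strict concavity of the Dirichlet problem. Write $u_0$ for the Dirichlet solution, meaning either the torsion function or the normalized first Dirichlet eigenfunction.

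Set $v=\log u$ in the eigenfunction case and $v=\sqrt{u}$ in the torsion case. By Brascamp--Lieb and Kennington, the corresponding Dirichlet quantity $v_0$ is concave. For a uniformly convex domain with enough regularity it is in fact strictly concave with a quantitative bound.

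\textbf{Interior.} On any compact $K\Subset\Omega$ I would aim for
\[
D^2v_0\le -c_K I \quad\text{on } K, \qquad c_K>0.
\]
I would also aim for $u^\alpha\to u_0$ in $C^2(\overline\Omega)$ as $\alpha\to\infty$, at rate $O(1/\alpha)$. To get this, write $u^\alpha-u_0=w$. Then $w$ solves $-\Delta w=\lambda^\alpha u^\alpha-\lambda u_0$ (or $0$ in the torsion case), with boundary condition $\partial_\nu w+\alpha w=-\partial_\nu u_0$. Schauder estimates for the Robin problem, uniform in $\alpha$, give
\[
\|w\|_{C^{2,\beta}}\le C\,\|\partial_\nu u_0\|_{C^{1,\beta}(\partial\Omega)}/\alpha .
\]
This is where the high regularity $\partial\Omega\in C^m$ is used: it makes $\partial_\nu u_0$ smooth enough for the estimate to hold. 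The eigenvalue convergence $\lambda^\alpha\to\lambda$ at rate $O(1/\alpha)$ follows from the same estimate together with the variational characterization. Since $u_0\ge c_K$ on $K$, strict concavity of $v^\alpha$ on $K$ then follows for large $\alpha$.

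\textbf{Boundary layer.} This is the main obstacle, because there $u\to0$ and the Hessian of $v$ degenerates. Use coordinates $(y',d)$ with $d=\operatorname{dist}(x,\partial\Omega)$. The Dirichlet function satisfies $u_0\approx d\,|\nabla u_0|$ with $|\nabla u_0|\ge c>0$ on $\partial\Omega$ by Hopf's lemma. The Robin function satisfies $u^\alpha\approx |\nabla u^\alpha|(d+1/\alpha)$, so it behaves like the Dirichlet solution on the enlarged domain $\{d>-1/\alpha\}$. I would compute
\[
D^2\log u=\frac{D^2u}{u}-\frac{\nabla u\otimes\nabla u}{u^2}.
\]
In the normal direction the second term dominates, giving $-c/(d+1/\alpha)^2$. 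In tangential directions, $D^2u$ restricted to tangent vectors on the boundary equals $-\partial_\nu u\,\mathrm{II}+O(u)$, which is negative definite by uniform convexity and has size of order $1/(d+1/\alpha)$. The mixed terms are controlled by the $C^3$ closeness of $u^\alpha$ to a function vanishing on $\{d=-1/\alpha\}$. This requires expanding $u^\alpha$ to third order near the boundary, again with error $O(1/\alpha)$ in $C^3$; the higher Sobolev-embedding regularity $m-n/2\ge4$ is exactly what provides $C^{3}$ convergence up to the boundary. The torsion case is the same computation with
\[
D^2\sqrt u=\frac{D^2u}{2\sqrt u}-\frac{\nabla u\otimes\nabla u}{4u^{3/2}} .
\]

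Finally, patch the two regions. Choose the boundary strip width $\delta$ from the boundary analysis, uniformly for large $\alpha$. Then apply the interior argument on $\{d\ge\delta\}$ and take $\alpha_0$ to be the larger of the two thresholds.
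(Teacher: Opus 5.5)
Your architecture is the one the paper uses: interior compactness plus a boundary strip, both driven by $\|u^\alpha-u_0\|_{C^2(\overline\Omega)}=O(\alpha^{-1})$. The paper in fact recovers this statement from Theorems~\ref{thm:torsion-concavity}--\ref{thm:eigen-concavity}, since $\lfloor m-n/2\rfloor\ge4$ forces $\partial\Omega\in C^{3,1}$. The gap is that the step carrying all the difficulty, the uniform-in-$\alpha$ $C^2$ convergence, is only asserted, and the tool you cite does not exist in that form.

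\begin{itemize}
\item Schauder estimates for Robin/oblique problems (Gilbarg--Trudinger, Section~6.7) have constants depending on the $C^{1,\beta}$ size of the zeroth-order boundary coefficient, here $\alpha$. Dividing the boundary condition by $\alpha$ does not help: it makes the obliqueness constant $\alpha^{-1}$ degenerate.
\item Your displayed inequality is false already in the half-space model $\{x_n>0\}$. For boundary data $\cos(Nx_1)$ the solution is $w=\cos(Nx_1)e^{-Nx_n}/(N+\alpha)$. Taking $N=\alpha$ gives $|\partial_{11}w(0)|=\alpha/2$, whereas the data's $C^{1,\beta}$ norm divided by $\alpha$ is only of order $\alpha^{\beta}$.
\item A gain of $\alpha^{-1}$ costs one more derivative of the data. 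Proving such a corrected bound uniformly in $\alpha$ is exactly the content of the paper's Sections~2--3: the corrector $u^1-\alpha v^{T,\alpha}$, shown to be $O(\alpha^{-1})$ in $C^1$; the auxiliary functions $G$ and $W$, which give the gradient bound and reduce the Hessian bound to $v_{\nu\nu}$ on $\partial\Omega$; and the strip barrier of Lemma~\ref{lem:torsion-double-normal}.
\item Crasta--Fragal\`a instead use high-order Sobolev estimates plus Morrey's embedding. That is where their dimension-dependent hypothesis enters, not in making $\partial_\nu u_0\in C^{1,\beta}$, which needs only a $C^{2,\beta}$ boundary.
\item For the eigenfunction there is a further obstruction you do not address. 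The operator $-\Delta-\lambda^\alpha$ with the Robin condition has $u^\alpha$ in its kernel, so no estimate can control $u^\alpha-u_0$ by the data alone. You first need $L^2$ and $L^\infty$ decay of $u^\alpha-u_0$, which uses the $L^2$ normalization and the Dirichlet spectral gap $\lambda_2^D-\lambda^D>0$. The paper does this by duality in Lemma~\ref{lem:eigen-l2}, then the ABP bound of Lemmas~\ref{lem:ABP-Robin}--\ref{lem:eigen-c0}.
\end{itemize}

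Your boundary-layer step also relies on $O(\alpha^{-1})$ convergence in $C^3$, which you attribute to $m-n/2\ge4$ without derivation. It is also unnecessary, because $C^2$ closeness suffices. Split unit vectors $\eta$ at the nearest boundary point $y_0$:
\begin{itemize}
\item If $|\langle\eta,\nu(y_0)\rangle|\le\epsilon_0$, uniform convexity and the Hopf lemma give $-\langle D^2u^\alpha\eta,\eta\rangle\ge\tau_0$. The gradient term in $D^2(-\log u^\alpha)$ or $D^2(-\sqrt{u^\alpha})$ is nonnegative, so positivity follows.
\item Otherwise $|\langle Du^\alpha,\eta\rangle|\ge c$, while $u^\alpha\le C(\mu_1+\alpha^{-1})$ in the strip, so the gradient term dominates.
\end{itemize}
No mixed-term bookkeeping or third-order expansion is needed; this is Lemma~\ref{prop:boundary-convexity}.
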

	Here $\lfloor t\rfloor$ means the integer part function. They also tracked the threshold: for fixed $n$ and $m$, $\alpha_0$ can be chosen
	to depend continuously and increasingly on the diameter of the domain $\Omega$, the $C^m$ size of $\partial\Omega$, and decreasingly on the minimum principal curvature of $\partial\Omega$.

	Notice that the regularity assumption for the convex domain depends on the dimension in Theorem \ref{thm:CF-published}, and this dependence was necessary for the approach in \cite{CrastaFragala2021}, since the
	high-order Sobolev estimates and the Morrey--Sobolev embedding were used
	to obtain the $C^2$ convergence; see
	\cite[Sections~2--3]{CrastaFragala2021}. Crasta and Fragal\`a then asked
	the following question \cite[p.~181]{CrastaFragala2021}.

	\begin{problem}
	\label{prob:CF-open}
		Is it possible to remove or weaken the regularity assumptions of Theorems 1.1 and 1.2 in \cite{CrastaFragala2021}?
	\end{problem}

	In this paper,  we solve the above problem in any dimension when the domain  is $C^{3,1}$. Thus our regularity assumption for the domain is dimensional free, and much weaker than that in \cite{CrastaFragala2021}. This also implies Conjecture~\ref{conj:ACH} of Andrews, Clutterbuck and Hauer for uniformly convex $C^{3,1}$ domains. Our main
	results are the following.

	\begin{theorem}[Robin torsion]\label{thm:torsion-concavity}
		Let $\Omega\subset\mathbb R^n$ be a bounded uniformly convex domain of
		class $C^{3,1}$. Then there is $\alpha_{0,T}>0$ such that
		$u^{T,\alpha}$, the Robin torsion function given by \eqref{RobinTorsion}, is strictly $\frac12$-concave for every
		$\alpha\geq\alpha_{0,T}$. The threshold depends only on $n$, the
		$C^{3,1}$ bound for $\partial\Omega$, and the positive lower bound for the
		principal curvatures on $\partial\Omega$.
	\end{theorem}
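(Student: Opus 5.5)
We follow the perturbative strategy of Crasta and Fragal\`a, but replace their Sobolev--Morrey step by maximum-principle estimates. Let $u=u^{T,\alpha}$ and let $u^D$ be the Dirichlet torsion function of $\Omega$. By Kennington's theorem, combined with the boundary behaviour of $u^D$ on a uniformly convex domain, $w^D:=\sqrt{u^D}$ is strictly concave up to the boundary in the following quantitative sense. There are $c_0>0$ and a collar $\Omega_\delta=\{x\in\Omega:d(x)<\delta\}$ such that:
\begin{enumerate}[label=\textup{(\alph*)}]
\item $D^2 w^D\le -c_0\,d^{-1/2}\,\mathrm{Id}$ in the tangential directions and $\le -c_0 d^{-3/2}$ in the normal direction near $\partial\Omega$;
\item $D^2 w^D\le -c_0\,\mathrm{Id}$ in $\Omega\setminus\Omega_\delta$.
\end{enumerate}
These constants depend only on $n$, the $C^{3,1}$ norm and the curvature lower bound $\kappa_0$. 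We obtain them from $u^D=d\,\partial_\nu$-type expansions: $u^D=|\nabla u^D|\,d+O(d^2)$ with $|\nabla u^D|\ge c>0$ on $\partial\Omega$ (Hopf), plus $C^{3,\beta}$ Schauder regularity of $u^D$.

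\textbf{Step 1: interior $C^2$ convergence.} Set $v=u-u^D$. Then $v$ is harmonic, and on $\partial\Omega$ it satisfies $\partial_\nu v+\alpha v=-\partial_\nu u^D=|\nabla u^D|$. Hence $v=O(1/\alpha)$ in $C^0$, by the maximum principle: compare with $(\max|\nabla u^D|)/\alpha$ plus a barrier. Differentiating tangentially, the functions $\partial_\tau v$ and their second derivatives satisfy Robin-type problems with $C^{1,1}$ data, because $\partial\Omega\in C^{3,1}$ gives exactly enough regularity to differentiate the boundary condition twice. A maximum-principle argument on $|\nabla v|^2$ and on second tangential derivatives then yields $\|v\|_{C^2(\overline\Omega)}\le C/\alpha$. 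The normal second derivative is recovered from $\Delta v=0$. This is the dimension-free replacement of \cite[Sections~2--3]{CrastaFragala2021}, and I expect it to be the main obstacle: the maximum principle for $\partial_\tau\partial_\tau v$ produces commutator terms involving third derivatives of the boundary, and the auxiliary function must be chosen so that the curvature terms have a good sign.

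\textbf{Step 2: concavity away from the boundary.} On $\Omega\setminus\Omega_\delta$ we have $u^D\ge c_\delta>0$. Step 1 gives $\|\sqrt u-\sqrt{u^D}\|_{C^2}\le C/\alpha$ there, so by (b), $D^2\sqrt u<0$ once $\alpha$ is large.

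\textbf{Step 3: the collar, and the main obstacle.} Near $\partial\Omega$, $u$ does not vanish: $u=\partial_\nu$-determined with $u|_{\partial\Omega}\approx|\nabla u^D|/\alpha$. So $w=\sqrt u$ behaves like $\sqrt{d+1/\alpha}$ times a smooth positive factor. Write $D^2 w=\frac{1}{2\sqrt u}\bigl(D^2u-\frac{\nabla u\otimes\nabla u}{2u}\bigr)$ and analyse directions separately.
\begin{itemize}
\item \emph{Normal direction.} The term $-|\partial_\nu u|^2/(2u)\approx -c\,(d+1/\alpha)^{-1}$ dominates the bounded $\partial_{\nu\nu}u$.
\item \emph{Tangential directions.} Here we need $D^2_{\tau\tau}u<0$ uniformly. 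On $\partial\Omega$ we have $u_{\tau\tau}=-\mathrm{II}(\tau,\tau)\,u_\nu+O(1/\alpha)$, and $u_\nu\approx -|\nabla u^D|<0$; by uniform convexity ($\mathrm{II}\ge\kappa_0$) this gives $u_{\tau\tau}\le -\kappa_0 c+C/\alpha$. This extends into $\Omega_\delta$ for small $\delta$ by the $C^2$ closeness of Step 1 together with the corresponding property of $u^D$.
\item \emph{Mixed terms.} Since $u_{\nu\tau}=O(1)$, these are absorbed by Cauchy--Schwarz, because the normal eigenvalue is of size $(d+1/\alpha)^{-1}\gg1$.
\end{itemize}

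Choosing first $\delta$ and then $\alpha_{0,T}$ large, we obtain $D^2\sqrt u<0$ on $\overline\Omega$, which is strict $\frac12$-concavity. The constants depend only on the listed quantities, which gives the stated dependence of $\alpha_{0,T}$.
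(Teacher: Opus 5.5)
\emph{Verdict: there is a genuine gap.} Your Steps 2 and 3 are essentially the paper's Section 4: interior $C^2$ closeness to the strictly concave $\sqrt{u^{T,D}}$, and in the collar a split according to the size of $|\langle\eta,\nu\rangle|$. Two small corrections there. First, on $\partial\Omega$ one has $u_{\tau\tau}=\nabla^2_{\partial\Omega}u(\tau,\tau)+\mathrm{II}(\tau,\tau)\,\partial_\nu u$, so the sign in front of $\mathrm{II}$ is $+$, not $-$. Second, your Cauchy--Schwarz absorption must also handle the gradient cross term $ab\,u_\tau u_\nu/u$, and this is harmless only because $u_\tau=O(d+\alpha^{-1})$, which again needs the $C^1$ decay.

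The gap is Step 1. The estimate $\|u^{T,\alpha}-u^{T,D}\|_{C^2(\overline\Omega)}\le C\alpha^{-1}$ is the whole content of the theorem, and the mechanism you sketch does not produce it.
\begin{enumerate}[label=\textup{(\roman*)}]
\item From $\|v\|_{L^\infty}\le C\alpha^{-1}$ (with $v=u^{T,\alpha}-u^{T,D}$) you only get $\partial_\nu v=q^T-\alpha v=O(1)$ on $\partial\Omega$. Nothing in your plan improves this to $O(\alpha^{-1})$.
\item Your claim that derivatives of $v$ solve Robin problems with $\alpha$-independent data holds in special geometries (flat boundary, rotation fields on a ball), but not in general. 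For a coordinate derivative, writing $e_i^T$ for the tangential part of $e_i$,
\[
\partial_\nu v_i+\alpha v_i=\partial_{e_i^T}q^T-\langle Dv,D_{e_i^T}\nu\rangle+\nu^i\bigl(v_{\nu\nu}+\alpha\,\partial_\nu v\bigr)\quad\text{on }\partial\Omega .
\]
The last term contains the unknown $v_{\nu\nu}$ and $\alpha\,\partial_\nu v$, which is a priori of size $\alpha$.
\item At a boundary maximum of $|\nabla v|^2$, the term $\partial_\nu v\,v_{\nu\nu}$ is uncontrolled.
\item Recovering $v_{\nu\nu}$ from $\Delta v=0$ presupposes $v_{\tau\tau}=O(\alpha^{-1})$ on $\partial\Omega$. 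Since $v|_{\partial\Omega}=\alpha^{-1}(q^T-\partial_\nu v)$, this requires two tangential derivatives of $\partial_\nu v$ to be $O(1)$, which is third-order information of the same strength as what you want to prove.
\end{enumerate}

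The missing idea is the corrector. Let $u^1$ be the harmonic extension of $q^T$ and set $w=u^1-\alpha v$. Then $w$ is harmonic and satisfies $\partial_\nu w+\alpha w=\partial_\nu u^1$, so the same boundary-maximum argument gives $\|w\|_{L^\infty}\le C\alpha^{-1}$. Since $w=\partial_\nu v$ on $\partial\Omega$, this yields the normal-derivative decay. Iterating with $\widetilde w=\widetilde u-\alpha w$ gives $\|Dw\|_{L^\infty}\le C\alpha^{-1}$.

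The paper then builds its auxiliary functions around $w$ so that the bad terms above never appear.
\begin{enumerate}[label=\textup{(\alph*)}]
\item For the gradient, it uses $G=\langle\xi,Dv\rangle+\langle\xi,Dh\rangle w+\frac12w^2$, with a tangential/non-tangential split at a boundary maximum.
\item For the Hessian, it uses the Lions--Trudinger--Urbas type function $W=v_{\xi\xi}+2\langle\xi,Dh\rangle\langle\widetilde\xi,Dw+D^2h\,Dv\rangle+\frac12|Dw|^2$. At a tangential boundary maximum this uses $v_{n11}=\alpha v_{11}+O(1+|D^2v|)$ and reduces everything to bounding $\|v_{\nu\nu}\|_{L^\infty(\partial\Omega)}$.
\item For the double normal derivative, it uses the barrier $\pm\varphi-\frac M4\bigl(h-\frac{C_0}{2}h^2\bigr)$ with $\varphi=\langle Dv,Dh\rangle+w$. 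This vanishes on $\partial\Omega$, and its normal derivative there is $-v_{\nu\nu}+\partial_\nu w$.
\end{enumerate}
No curvature sign is used in any of this; convexity enters only in the concavity step. So the obstacle is not a curvature term, as you expected. It is the lack of decay of $\partial_\nu v$, and the resulting $\alpha\,\partial_\nu v$ terms, which is exactly what $w$ captures.
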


	\begin{theorem}[First Robin eigenfunction]
	\label{thm:eigen-concavity}
		Let $\Omega\subset\mathbb R^n$ be a bounded uniformly convex domain of
		class $C^{3,1}$. Then there is $\alpha_{0,E}>0$ such that the first
		Robin eigenfunction $u^{E,\alpha}$, solution to \eqref{RobinEigen}, is strictly log-concave for every
		$\alpha\geq\alpha_{0,E}$. The threshold has the same dependence as
		in the Robin torsion problem.
	\end{theorem}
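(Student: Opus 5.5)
We prove Theorem~\ref{thm:eigen-concavity} by comparing $u^{E,\alpha}$ with the first Dirichlet eigenfunction $u^{E,\infty}$, normalized by $\|u^{E,\infty}\|_{L^2}=1$ and with eigenvalue $\lambda^\infty$. By Brascamp--Lieb, $\log u^{E,\infty}$ is concave. On a uniformly convex $C^{3,1}$ domain we need a quantitative, boundary-uniform version of this. Write $u^{E,\infty}=d\,\psi$ near $\partial\Omega$, where $d$ is the distance function and $\psi>0$ is $C^{1,1}$ up to the boundary. Then
\[
D^2\log u^{E,\infty}=-\frac{\nabla u\otimes\nabla u}{u^2}+\frac{D^2u}{u}.
\]
At the boundary, the normal--normal part behaves like $-1/d^2$, while the tangential part is governed by $-\kappa_i|\nabla u|/u\sim -\kappa_i/d$, where the $\kappa_i$ are the principal curvatures. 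So there is $c_0>0$ such that $D^2\log u^{E,\infty}\le -c_0\,\mathrm{diag}(d^{-2},d^{-1},\dots,d^{-1})$ in a collar. In the interior, strict log-concavity holds with a constant $c_0>0$; this follows from a constant-rank or strict Brascamp--Lieb argument, or directly from Korevaar-type concavity maximum principles.

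For the Robin eigenfunction we set $w=\log u^{E,\alpha}$ and prove $D^2w<0$ for large $\alpha$. We split $\Omega$ into an interior region $\{d\ge\delta\}$ and a collar $\{d<\delta\}$, where $\delta$ is chosen in terms of the geometry.

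\textbf{Interior region.} It suffices to prove uniform $C^2$ convergence $u^{E,\alpha}\to u^{E,\infty}$ on compact subsets, with rate $O(1/\alpha)$. Since $u^{E,\alpha}-u^{E,\infty}$ is $O(1/\alpha)$ on $\partial\Omega$ in $C^{2,\beta}$ norm, Schauder estimates together with eigenvalue convergence $\lambda^\alpha\to\lambda^\infty$ handle this region. Only $C^{2,\beta}$ boundary regularity is needed here.

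\textbf{Collar, and the main obstacle.} In the collar $u^{E,\alpha}$ does not vanish; instead $u^{E,\alpha}\approx \psi\,(d+1/\alpha)$. The natural ansatz is that $v:=u^{E,\alpha}$ behaves like the Dirichlet solution shifted outward by $1/\alpha$. We aim for \emph{uniform $C^2$ decay estimates}: with $\rho:=d+1/\alpha$,
\[
|D^2(v-u^{E,\infty}-\tfrac1\alpha\,\partial_\nu\text{-correction})|\le C\alpha^{-1}\rho^{-1}\cdot o(1)
\]
or a comparable weighted bound. These are to be obtained by maximum principle arguments rather than by Sobolev embedding. Concretely:
\begin{itemize}[label={}]
\item[(a)] We bound $\nabla v$ via the Robin condition and a barrier.
\item[(b)] We differentiate the equation tangentially twice. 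In boundary-flattening coordinates the second tangential derivatives satisfy an elliptic equation with an oblique (Robin-type) boundary condition. The coefficients of that boundary condition involve third derivatives of the boundary parametrization, and the lower-order terms involve fourth derivatives; this is exactly where $C^{3,1}$ is used.
\item[(c)] We recover $v_{\nu\nu}$ from the equation $v_{\nu\nu}=-\lambda v-\text{(tangential Laplacian)}-H v_\nu$.
\end{itemize}
Given these estimates, the key computation is the sign of $D^2 w$ on the boundary. We have $w_\nu=-\alpha$, and tangentially $w_{ij}=v_{ij}/v-v_iv_j/v^2$ with $v_{ij}=-\kappa_{ij}v_\nu+O(1)=\alpha\kappa_{ij}v\cdot(-1)+O(1)$. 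Hence $w_{ij}\le-\alpha\kappa_{\min}+O(1)$, which is negative for large $\alpha$ because $v$ is bounded below by $c/\alpha$ on $\partial\Omega$. The mixed terms are controlled by the tangential-normal derivative estimates, and the normal--normal term is $\approx-\alpha^2$. We then propagate negativity into the collar by comparing with the model $\log\rho$-type matrix $-\mathrm{diag}(\rho^{-2},\rho^{-1},\dots)$. The errors coming from (b) must be $o(\rho^{-1})$ tangentially and $o(\rho^{-2})$ normally, and $\delta$ is fixed small first and $\alpha$ taken large afterwards.

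The main obstacle is the uniform-in-$\alpha$ second derivative estimate near $\partial\Omega$ in step (b) with the correct weight in $\rho$. I would try first a Bernstein-type auxiliary function $\Phi=|D_T^2 v|^2+A|\nabla v|^2+B v$, adapted to the oblique boundary condition, and use uniform convexity to sign the boundary term in Hopf's lemma.

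The torsion case, Theorem~\ref{thm:torsion-concavity}, is identical with $\sqrt{u}$ in place of $\log u$: there the Dirichlet model satisfies $D^2\sqrt{u}\le -c_0\,\mathrm{diag}(d^{-3/2},d^{-1/2},\dots)$ by Kennington's result, and the same thresholds apply.
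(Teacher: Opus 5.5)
\textbf{There is a genuine gap: the proposal does not prove uniform $C^2$ convergence up to the boundary, and the tool you suggest for it cannot deliver convergence.}

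Your overall architecture matches the paper's: compare with $u^{E,D}$, use strict Dirichlet log-concavity on a compact interior set, and use uniform convexity plus Hopf in a collar. What is missing is the estimate everything rests on, namely $u^{E,\alpha}\to u^{E,D}$ in $C^2$ up to $\partial\Omega$, uniformly for large $\alpha$. You flag it as the main obstacle and only suggest trying $\Phi=|D_T^2v|^2+A|\nabla v|^2+Bv$.

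Applied to $v=u^{E,\alpha}$, such a function can at best give uniform $C^2$ bounds. It does not give closeness to $u^{E,D}$, and closeness is exactly what the collar needs. Your own boundary computation shows this. From $v_{ij}=-\kappa_{ij}v_\nu+O(1)$ you cannot conclude $(\log u^{E,\alpha})_{ij}\le-\alpha\kappa_{\min}+O(1)$. The main term is $\kappa_{ij}q^E=O(1)$, so the $O(1)$ error is the same size. After dividing by $v\approx q^E/\alpha$, both are $O(\alpha)$. You need the tangential Hessian of the trace of $u^{E,\alpha}$ to be $o(1)$, which is $C^2$ convergence again.

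Running the Bernstein argument on the difference $v^{E,\alpha}=u^{E,\alpha}-u^{E,D}$ does not fix this. Differentiating $\partial_\nu v^{E,\alpha}+\alpha v^{E,\alpha}=q^E$ twice along $\partial\Omega$ produces a term $\alpha\kappa\,\partial_\nu v^{E,\alpha}$. A priori $\partial_\nu v^{E,\alpha}=q^E-\alpha v^{E,\alpha}$ is only $O(1)$, so a boundary maximum argument yields $O(1)$ rather than decay.

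\textbf{The missing idea is the paper's corrector.} Let $u^1$ solve $-\Delta u^1=\alpha(\lambda^\alpha v^{E,\alpha}+f^\alpha)$ in $\Omega$ with $u^1=q^E$ on $\partial\Omega$. It is uniformly bounded in $C^2$ once $\|v^{E,\alpha}\|_{L^\infty}=O(\alpha^{-1})$, which the paper gets from $L^2$ decay plus a Robin ABP estimate. Set $w=u^1-\alpha v^{E,\alpha}$. Then $w$ is harmonic, $w=\partial_\nu v^{E,\alpha}$ on $\partial\Omega$, and $\partial_\nu w+\alpha w=\partial_\nu u^1$. The elementary boundary maximum principle gives $\|w\|_{L^\infty}=O(\alpha^{-1})$, and a second corrector gives $\|Dw\|_{L^\infty}=O(\alpha^{-1})$. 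This turns the Robin condition into $\partial_\nu v^{E,\alpha}=O(\alpha^{-1})$ in $C^1(\partial\Omega)$, which removes the bad term above.

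With this in hand, the paper proceeds in three steps:
\begin{enumerate}
\item It uses $v_\xi+\langle\xi,Dh\rangle w+\frac12w^2$ for $C^1$ decay.
\item It uses $v_{\xi\xi}+2\langle\xi,Dh\rangle\langle\widetilde\xi,Dw+D^2h\,Dv\rangle+\frac12|Dw|^2$ to reduce $\|D^2v^{E,\alpha}\|_{L^\infty}$ to $\max_{\partial\Omega}|v^{E,\alpha}_{\nu\nu}|$.
\item It bounds the double normal derivative with the barrier $\pm\varphi-\frac M4(h-\frac{C_0}2h^2)$, where $\varphi=\langle Dv^{E,\alpha},Dh\rangle+w$.
\end{enumerate}

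\textbf{Two smaller points.} First, your $\rho$-weighted target is unnecessary. The unweighted bound $\|u^{E,\alpha}-u^{E,D}\|_{C^2(\overline\Omega)}\le C\alpha^{-1}$ suffices, because the factor $1/u$ is common to both terms. For $|\langle\eta,\nu(y_0)\rangle|\le\epsilon_0$ one has $-u^{E,\alpha}_{\eta\eta}\ge c_0$. Otherwise $|u^{E,\alpha}_\eta|\ge c_0$ and $u^{E,\alpha}\le C\mu_1+C\alpha^{-1}$, so the term $(u_\eta)^2/u^2$ dominates $|D^2u|/u$. No model matrix is needed.

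Second, your interior premise that $u^{E,\alpha}-u^{E,D}$ is $O(\alpha^{-1})$ in $C^{2,\beta}(\partial\Omega)$ is unproved. It amounts to uniform third-derivative control of $u^{E,\alpha}$ at the boundary. It is also not needed: interior estimates from the $L^2$ decay of $v^{E,\alpha}$ and $|\lambda^\alpha-\lambda^D|=O(\alpha^{-1})$ suffice on compact sets.
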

    A main ingredient, which is also of independent interest, is the following global $C^2$-decay estimate. We state them separately.
	For the decay statements, let $u^{T,D}$ solve
	\begin{equation}\label{DirichletTorsion}
		\begin{cases}
			-\Delta u^{T,D}=1 & \text{in }\Omega,\\
			u^{T,D}=0 & \text{on }\partial\Omega,
		\end{cases}
	\end{equation}
	and let $u^{E,D}>0$ be the first Dirichlet eigenfunction satisfying
	$\|u^{E,D}\|_{L^2(\Omega)}=1$,
	\begin{equation}\label{DirichletEigen}
		\begin{cases}
			-\Delta u^{E,D}=\lambda^D u^{E,D} & \text{in }\Omega,\\
			u^{E,D}=0 & \text{on }\partial\Omega.
		\end{cases}
	\end{equation}

	\begin{theorem}[Convergence rate for the torsion problem]
	\label{thm:torsion-decay}
		Let $\Omega\subset\mathbb R^n$ be a bounded domain of class $C^{3,1}$. There are positive constants $C_T$ and $\alpha_T$,
		depending only on the $C^{3,1}$ bound of $\partial\Omega$
		 such that for every $\alpha\geq\alpha_T$,
		\[
		\|u^{T,\alpha}-u^{T,D}\|_{C^2(\overline\Omega)}
		\leq C_T\alpha^{-1}.
		\]
	\end{theorem}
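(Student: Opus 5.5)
Set $w=u^{T,\alpha}-u^{T,D}$. Then $w$ is harmonic in $\Omega$, and since $u^{T,D}=0$ on $\partial\Omega$ the boundary condition becomes
\[
\partial_\nu w+\alpha w=-\partial_\nu u^{T,D}=:g \quad\text{on }\partial\Omega,
\]
where $g>0$ is a fixed $C^{2,1}$ function on $\partial\Omega$. Its norm is controlled by Schauder estimates for $u^{T,D}$ in the $C^{3,1}$ domain. The plan is to first obtain $\|w\|_{C^0}\le C\alpha^{-1}$. We then bootstrap to derivatives by applying maximum principle arguments to the harmonic functions $\partial_k w$ and to suitable tangential and second-order combinations, each time using the Robin condition to convert information about $\alpha w$ into information about $\partial_\nu w$.

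\textbf{Step 1 ($C^0$).} Since $w$ is harmonic, its maximum and minimum are attained on $\partial\Omega$. At a boundary maximum point $\partial_\nu w\ge0$, so $\alpha w\le g\le \|g\|_\infty$. At a minimum point $\partial_\nu w\le 0$, which gives $\alpha w\ge g\ge0$. Hence $0\le w\le \|g\|_\infty/\alpha$.

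\textbf{Step 2 ($C^1$).} Extend $\nu$ to a $C^{2,1}$ vector field near $\partial\Omega$, and let $T$ be tangential fields. Differentiating the boundary relation tangentially gives
\[
\partial_\nu(Tw)+\alpha\, Tw=Tg+(\text{terms involving }D w\text{ weighted by the curvature}).
\]
To handle these I would use the auxiliary function $\Phi=|Dw|^2+A w^2$, or $\Phi=\partial_e w$ plus a barrier. Harmonicity makes $|Dw|^2$ subharmonic, so $\max\Phi$ is attained on $\partial\Omega$. At a boundary maximum, split $Dw$ into normal and tangential parts. The normal part is $\partial_\nu w=g-\alpha w=O(1)$. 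For the tangential part I would apply the maximum principle to $Tw$: at its boundary maximum the Robin-type identity gives $\alpha\,Tw\le C+C|Dw|$, so $|Tw|\le C\alpha^{-1}(1+\|Dw\|)$. Combining with the $O(1)$ normal bound gives $\|Dw\|\le C$. This alone is not $O(\alpha^{-1})$ for the normal part, since $\partial_\nu w=g-\alpha w$ and $\alpha w\to g$ only at rate $O(1)$. The better rate comes from iterating: set $w_1=w-\alpha^{-1}h$ with $h$ the harmonic extension of $g$. Then
\[
\partial_\nu w_1+\alpha w_1=-\alpha^{-1}\partial_\nu h,
\]
with data of size $O(\alpha^{-1})$ in $C^{1,1}$. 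Since $\|h\|_{C^{2}}\le C$ (because $g\in C^{2,1}$), applying the $C^0$ and $C^1$ argument to $w_1$ yields $\|w_1\|_{C^1}\le C\alpha^{-1}$, and hence $\|w\|_{C^1}\le C\alpha^{-1}$.

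\textbf{Step 3 ($C^2$).} This is the main obstacle and needs the full $C^{3,1}$ regularity. I would repeat the scheme at second order, as follows.
\begin{enumerate}[label=(\alph*)]
\item Pure tangential second derivatives $T_iT_jw_1$: differentiate the boundary condition twice tangentially. This requires $\nu\in C^{2,1}$ and data in $C^{1,1}$, hence $g\in C^{2,1}$ and $\partial\Omega\in C^{3,1}$. The maximum principle then gives $|TTw_1|\le C\alpha^{-1}(1+\|D^2w_1\|)$.
\item Mixed derivatives $\partial_\nu T w_1$: read these from the tangentially differentiated Robin condition, $\partial_\nu T w_1=-\alpha\,Tw_1+O(\alpha^{-1})$. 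This is $O(1)$ rather than small, so it forces a second correction $w_2=w_1+\alpha^{-2}h_2$, with $h_2$ harmonic with boundary data $\partial_\nu h$. The correction moves the error to $O(\alpha^{-2})$ in $C^1$-type norms, after which the mixed terms become $O(\alpha^{-1})$.
\item Normal-normal derivatives: obtain these from harmonicity, $\partial_{\nu\nu}w=-\sum_i T_iT_iw+O(|Dw|)$.
\end{enumerate}
The interior $C^2$ bound then follows from the maximum principle applied to the components of $D^2w_2$, which are harmonic. Finally, $\|D^2 h\|$ and $\|D^2h_2\|$ are bounded by Schauder theory, using $h_2$'s data in $C^{1,1}$ and $\partial\Omega\in C^{3,1}$. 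Together these give $\|w\|_{C^2}\le C\alpha^{-1}$.

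The delicate point is closing the $C^2$ estimate with constants independent of $\alpha$. The curvature terms from differentiating $\nu$ twice must be absorbed, and the corrector data must be checked to stay in $C^{1,1}$, which is exactly where $C^{3,1}$ is used. If the direct maximum principle fails to absorb the $\|D^2 w\|$ term, I would instead use a boundary barrier $\alpha^{-1}(1-e^{-Kd})$ in the distance function $d$, with $\alpha$ large.
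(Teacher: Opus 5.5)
Your Step 2 idea of peeling off $\alpha^{-1}h$ is the right one: in your notation the paper's key function is $\omega:=h-\alpha w=-\alpha w_1$, which is harmonic with $\omega=\partial_\nu w$ on $\partial\Omega$.

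\textbf{Step 3 fails at $C^{3,1}$ because your correctors are one derivative short.} Schauder theory gives $u^{T,D}\in C^{3,\beta}$. Hence $g\in C^{2,\beta}$, $h\in C^{2,\beta}(\overline\Omega)$ and $\partial_\nu h\in C^{1,\beta}(\partial\Omega)$, so $h_2$ is only $C^{1,\beta}(\overline\Omega)$. Dirichlet data in $C^{1,\beta}$, or even $C^{1,1}$, do not give bounded Hessians; the harmonic extension of $x_1|x_1|$ to a half-space has a logarithmically divergent Hessian.

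\begin{itemize}
\item Your final claim $\|D^2h_2\|_{L^\infty}\le C$ is therefore false, and $D^2w_2$ is not uniformly bounded.
\item The Robin datum $\alpha^{-2}\partial_\nu h_2$ of $w_2$ cannot even be differentiated tangentially. So neither ``the $C^1$ argument applied to $w_2$'' nor ``the maximum principle for the components of $D^2w_2$'' is available.
\item Step (a) has the same defect. Differentiating $\partial_\nu w_1+\alpha w_1=-\alpha^{-1}\partial_\nu h$ twice tangentially, and taking $\partial_\nu$ of $(w_1)_{\tau\tau}$ at a maximum, both require $D^3h$.
\end{itemize}

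The paper stays within the available regularity. It differentiates $\partial_\nu w=\omega$ twice, where only $D^2h$ enters. It uses $h_2$ (its $\widetilde u$) only through $\|Dh_2\|_{L^\infty}$, by running the first-order argument on $\omega$ with $\widetilde w:=h_2-\alpha\omega=\alpha^2w_2$ playing the role of $\partial_\nu\omega$. This gives $\|D\omega\|_{L^\infty}\le C\alpha^{-1}$, i.e.\ $\|Dw_1\|\le C\alpha^{-2}$. Then $w_{\nu\tau}=\omega_\tau-\langle Dw,D_\tau\nu\rangle=O(\alpha^{-1})$ follows directly, with no second corrector needed at the $C^2$ level.

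\textbf{The maximum-principle steps are not well posed, and the double normal derivative has no independent bound.}
\begin{itemize}
\item $Tw$ and $TTw_1$ for tangential vector fields are not harmonic.
\item For a constant direction $e$, the boundary maximum of $\partial_ew$ or $w_{ee}$ need not occur where $e$ is tangent. That is the only place the differentiated Robin condition can be used.
\item At a boundary maximum of $|Dw|^2+Aw^2$, the term $w_\nu w_{\nu\nu}$ appears with $w_{\nu\nu}$ uncontrolled.
\end{itemize}

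The paper uses auxiliary functions whose boundary values see only tangential components. With $d$ the distance function (called $h$ in the paper) and $\widetilde\xi=\xi-\langle\xi,Dd\rangle Dd$, these are $\langle\xi,Dw\rangle+\langle\xi,Dd\rangle\omega+\tfrac12\omega^2$ and $w_{\xi\xi}+2\langle\xi,Dd\rangle\langle\widetilde\xi,D\omega+D^2d\,Dw\rangle+\tfrac12|D\omega|^2$. With these, a non-tangential direction splits into a tangential one plus the pure normal one.

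Even so, the extremal direction can be $\nu$, and your (c) is circular there. The identity $w_{\nu\nu}=-\sum_\tau w_{\tau\tau}$ turns an upper bound for $w_{\nu\nu}$ into a lower bound for the tangential second derivatives. A one-sided Hopf argument at a maximum does not provide that lower bound.

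The missing ingredient is a separate barrier estimate for $M:=\max_{\partial\Omega}|w_{\nu\nu}|$. Set $\varphi:=\langle Dw,Dd\rangle+\omega$.
\begin{itemize}
\item $\varphi$ vanishes on $\partial\Omega$ and satisfies $|\varphi|\le C\alpha^{-1}$.
\item Using the reduction $\|D^2w\|\le C(\alpha^{-1}+M)$ from the second auxiliary function, $|\Delta\varphi|\le C(M+\alpha^{-1})$.
\item Since $D^2d\,\nu=0$ on $\partial\Omega$, one has $\partial_\nu\varphi=-w_{\nu\nu}+\partial_\nu\omega$ there.
\end{itemize}
Comparing $\pm\varphi$ with $\tfrac M4\bigl(d-\tfrac{C_0}{2}d^2\bigr)$ in a boundary strip yields $M\le C\alpha^{-1}$.

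Your fallback barrier $\alpha^{-1}(1-e^{-Kd})$ cannot do this job. The barrier's amplitude must be the unknown $M$. It must also be compared with a function that vanishes on $\partial\Omega$ and whose normal derivative there equals $-w_{\nu\nu}$ up to $O(\alpha^{-1})$.
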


	\begin{theorem}[Convergence rate for the first eigenfunction]
	\label{thm:eigen-decay}
		Let $\Omega\subset\mathbb R^n$ be a bounded domain of class $C^{3,1}$. There are uniform positive constants $C_E$ and $\alpha_E$,
		depending only on the $C^{3,1}$ bound for $\partial\Omega$,
		 such that, for every $\alpha\geq\alpha_E$,
		\[
		\|u^{E,\alpha}-u^{E,D}\|_{C^2(\overline\Omega)}
		\leq C_E\alpha^{-1}.
		\]
	\end{theorem}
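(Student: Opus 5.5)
Set $w=u^{E,\alpha}-u^{E,D}$ and reduce the eigenfunction estimate to the torsion-type analysis behind Theorem~\ref{thm:torsion-decay}. The difference satisfies
\[
-\Delta w=\lambda^\alpha w+(\lambda^\alpha-\lambda^D)u^{E,D}\ \text{in }\Omega,\qquad \partial_\nu w+\alpha w=-\partial_\nu u^{E,D}\ \text{on }\partial\Omega .
\]
The boundary datum $g:=-\partial_\nu u^{E,D}$ is $C^{2,1}$ on $\partial\Omega$, since $u^{E,D}\in C^{3,1}(\overline\Omega)$ by Schauder theory on $C^{3,1}$ domains. So we will need (a) an eigenvalue rate $0<\lambda^D-\lambda^\alpha\le C\alpha^{-1}$, (b) a $C^0$ (then $C^2$) bound on $w$ of order $\alpha^{-1}$, and (c) control of the normalization, which is only fixed in $L^2$.

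Step one is the eigenvalue rate. Monotonicity in $\alpha$ gives $\lambda^\alpha\le\lambda^D$. For the lower bound, use $\phi=u^{E,\alpha}$ in the variational characterization of $\lambda^D$ after correcting its trace, or use the first-order boundary perturbation: test the equation of $u^{E,\alpha}$ against $u^{E,D}$ and integrate by parts. This gives
\[
(\lambda^D-\lambda^\alpha)\int u^{E,\alpha}u^{E,D}=\int_{\partial\Omega}u^{E,\alpha}\,|\partial_\nu u^{E,D}|,
\]
with $u^{E,\alpha}=-\alpha^{-1}\partial_\nu u^{E,\alpha}$ on $\partial\Omega$. A uniform gradient bound for $u^{E,\alpha}$ (barrier/Bernstein argument, uniform in $\alpha\ge1$) then gives $\lambda^D-\lambda^\alpha=O(\alpha^{-1})$, once $\int u^{E,\alpha}u^{E,D}$ is bounded below. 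That lower bound follows from compactness: $u^{E,\alpha}\to u^{E,D}$ in $L^2$ as $\alpha\to\infty$.

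Step two is the $C^0$ bound for $w$ modulo normalization. Write $u^{E,\alpha}=c_\alpha u^{E,D}+v$ with $v\perp u^{E,D}$ in $L^2$. The equation for $v$ has a right-hand side that is $O(\alpha^{-1})$, and the operator $-\Delta-\lambda^\alpha$ is coercive on the orthogonal complement uniformly for large $\alpha$, because the spectral gap $\lambda_2^\alpha-\lambda^\alpha$ stays bounded below. This gives $\|v\|_{H^1}=O(\alpha^{-1})$. The $L^2$ normalization then forces $|c_\alpha-1|=O(\alpha^{-2})$. To upgrade to $L^\infty$, use the maximum principle with barriers of the form $\alpha^{-1}(A-B|x|^2)$, or better $\alpha^{-1}\psi$ with $\psi$ solving an auxiliary Robin problem, exactly as for the torsion difference. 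Here the Robin boundary term is handled because $\alpha w+\partial_\nu w=g$ gives $|w|\lesssim\alpha^{-1}(\|g\|_\infty+|\nabla w|)$ on $\partial\Omega$.

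Step three, the main obstacle, is lifting to $C^2$ uniformly in $\alpha$. I would reproduce the maximum-principle scheme used for Theorem~\ref{thm:torsion-decay}. First, tangential derivatives of $w$ up to order two satisfy Robin-type problems with $\alpha$ times lower-order commutator terms; these are controlled using the $C^{3,1}$ flattening, which explains the regularity threshold. Second, the normal derivative bound comes from the boundary relation $\partial_\nu w=g-\alpha w$ differentiated tangentially, so $\alpha\nabla_T w$ must be shown to be $O(1)$. Third, mixed and normal–normal second derivatives are recovered from the equation, with $\partial_{\nu\nu}w=-\Delta_T w-H\partial_\nu w-\lambda^\alpha w-(\lambda^\alpha-\lambda^D)u^{E,D}$. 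The zero-order terms $\lambda^\alpha w+(\lambda^\alpha-\lambda^D)u^{E,D}$ are already $O(\alpha^{-1})$ by Steps one and two, so they enter as harmless forcing. The key difficulty is the tangential $C^2$ estimate near the boundary, which needs the auxiliary functions $\alpha\,\partial_T^k w$ to be bounded independently of $\alpha$. I would attempt this with a function $\alpha|\nabla_T w|^2+K|w|^2\alpha^2+\dots$ and a boundary analysis at its maximum, as in the torsion case.
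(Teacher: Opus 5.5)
\textbf{Gaps in Step 2.} Your Step 1 is acceptable (the paper simply cites this rate). Step 2, however, has two real gaps. Below, $v^{E,\alpha}$ denotes your $w=u^{E,\alpha}-u^{E,D}$, and $r:=u^{E,\alpha}-c_\alpha u^{E,D}$ is your orthogonal part.

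\emph{The $L^2$ rate.} $r$ solves $(-\Delta-\lambda^\alpha)r=c_\alpha(\lambda^\alpha-\lambda^D)u^{E,D}=O(\alpha^{-1})$, but its Robin datum is $\partial_\nu r+\alpha r=c_\alpha q^E$, which is of order one. Testing with $r$ and using coercivity, the term $c_\alpha\int_{\partial\Omega}q^E r$ only yields $\|r\|_{L^2(\partial\Omega)}\lesssim\alpha^{-1}$ and $\|r\|_{H^1(\Omega)}\lesssim\alpha^{-1/2}$. This term is genuinely of size $\alpha^{-1}$, since $r=-\alpha^{-1}\partial_\nu u^{E,\alpha}\approx\alpha^{-1}q^E$ on $\partial\Omega$. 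It is cancelled to leading order only by $\alpha\|r\|_{L^2(\partial\Omega)}^2$, which coercivity cannot detect. The $\alpha^{-1}$ rate is true but needs an extra idea. One option is to subtract a corrector $\alpha^{-1}c_\alpha\Phi$ with $\Phi=q^E$ on $\partial\Omega$. The other is the duality argument of Lemma~\ref{lem:eigen-l2}: test against the Dirichlet solution of $(-\Delta-\lambda^\alpha)\psi^\alpha=z^\alpha$ with $\psi^\alpha\perp u^{E,D}$, which gives $\|z^\alpha\|_{L^2}^2=\alpha^{-1}\int_{\partial\Omega}\partial_\nu u^{E,\alpha}\,\partial_\nu\psi^\alpha$.

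\emph{The $L^\infty$ upgrade.} More seriously, this cannot be done ``exactly as for the torsion difference''. With the Robin condition, $-\Delta-\lambda^\alpha$ annihilates $u^{E,\alpha}$, so there is no comparison principle. If you move $\lambda^\alpha v^{E,\alpha}$ to the right-hand side as a source, the natural barrier $\alpha^{-1}\|q^E\|_\infty+(\lambda^\alpha\|v^{E,\alpha}\|_\infty+C\alpha^{-1})u^{T,\alpha}$ gives
$\|v^{E,\alpha}\|_\infty\le C\alpha^{-1}+\lambda^\alpha\|u^{T,\alpha}\|_\infty\|v^{E,\alpha}\|_\infty$.
This never closes, because $\lambda^\alpha\|u^{T,\alpha}\|_\infty\ge1$; to see this, compare $u^{E,\alpha}$ with $\lambda^\alpha\|u^{E,\alpha}\|_\infty u^{T,\alpha}$ at the maximum point of $u^{E,\alpha}$. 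You need an estimate that sees only an integral norm of the source. The paper uses the Robin ABP estimate (Lemma~\ref{lem:ABP-Robin}), with $\|\lambda^\alpha v^{E,\alpha}+f^\alpha\|_{L^n}$ on the right. It then closes via $\|v^{E,\alpha}\|_{L^n}\le\|v^{E,\alpha}\|_{L^\infty}^{1-2/n}\|v^{E,\alpha}\|_{L^2}^{2/n}$, the $L^2$ rate, and Young's inequality.

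\textbf{Gaps in Step 3.} Step 3 is a programme rather than a proof, and it lacks the structural ingredient that makes the rate $\alpha^{-1}$ possible. On $\partial\Omega$ one has $\partial_\nu v^{E,\alpha}=q^E-\alpha v^{E,\alpha}$ and $v^{E,\alpha}_{\nu\tau}=\partial_\tau(q^E-\alpha v^{E,\alpha})-\langle Dv^{E,\alpha},D_\tau\nu\rangle$. Hence decay of the normal and mixed derivatives at rate $\alpha^{-1}$ is equivalent to a two-term boundary expansion $\alpha v^{E,\alpha}=q^E+O(\alpha^{-1})$ in $C^1(\partial\Omega)$. Bounds such as $\alpha\nabla_T v^{E,\alpha}=O(1)$ or $\alpha\partial_T^k v^{E,\alpha}=O(1)$, which is what you propose to prove, give these quantities only as $O(1)$.

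The paper captures the expansion through the corrector $u^1$, defined by $-\Delta u^1=\alpha(\lambda^\alpha v^{E,\alpha}+f^\alpha)$ and $u^1=q^E$ on $\partial\Omega$. Then $w=u^1-\alpha v^{E,\alpha}$ is harmonic with Robin datum $\partial_\nu u^1=O(1)$, so $\|w\|_\infty=O(\alpha^{-1})$ by the maximum principle. One more iteration, using the harmonic extension $\widetilde u$ of $\partial_\nu u^1$ and $\widetilde w=\widetilde u-\alpha w$, gives $\|Dw\|_\infty=O(\alpha^{-1})$. This $C^1$ decay of $w$ drives the auxiliary functions $G$ and $W$, and also the boundary barrier $\pm\varphi-\frac{M_E}{4}\bigl(h-\frac{C_0}{2}h^2\bigr)$ that bounds $v^{E,\alpha}_{\nu\nu}$.

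Two further points. Reading $v^{E,\alpha}_{\nu\nu}$ off the equation presupposes the tangential Hessian bound that you leave open. And tangential differentiation in flattened coordinates produces interior commutators involving $D^2v^{E,\alpha}$, which are not controlled at that stage.
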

For a function $g$ which is twice differentiable up to the boundary, set
	\[
	\|g\|_{C^2(\overline\Omega)}
	:=\|g\|_{L^\infty(\Omega)}
	+\|Dg\|_{L^\infty(\Omega)}
	+\|D^2g\|_{L^\infty(\Omega)}.
	\]
	Hereafter, $D$ and $D^2$ denote the gradient and the hessian of a gereric function. Thus Theorem \ref{thm:torsion-decay} and \ref{thm:eigen-decay} assert respectively the uniform convergence of the functions $u^{T,\alpha}$ and $u^{E,\alpha}$, their
	gradient, and also their second-order derivatives. 

	\subsection{Outline of the proof for convergence}
    Our proof relies on maximum principle and barrier arguments, which is fundamentally different from the method developed by Crasta and Fragal\`a \cite{CrastaFragala2021}.
    
	For the torsion problem, we consider
	\begin{equation}\label{eq:intro-torsion-notation}
	 v^{T,\alpha}:=u^{T,\alpha}-u^{T,D}.
	\end{equation}
	Then $v^{T,\alpha}$ satisfies
	\begin{equation}\label{eq:intro-torsion-problem}
	\left\{
	\begin{aligned}
	 \Delta v^{T,\alpha}&=0 &&\text{in }\Omega,\\
	 \partial_\nu v^{T,\alpha}+\alpha v^{T,\alpha}&=q^T
	 &&\text{on }\partial\Omega,
	\end{aligned}
	\right.
	\end{equation}
	where  $ q^T:=-\partial_\nu u^{T,D}.$

The standard maximum principle first gives directly $\|v^{T,\alpha}\|_{L^\infty(\Omega)}\leq C\alpha^{-1}$. Let $u^1$ be the harmonic extension of $q^T$, and set $w:=u^1-\alpha v^{T,\alpha}$. We remark that $w$ plays a key role in the decay estimates.
     
     Remark that $w$ depends on $\alpha$, but we omit the index to simplify the writing. A boundary maximum argument will give $\|w\|_{L^\infty(\Omega)}\leq C\alpha^{-1}$. Since $w=\partial_\nu v^{T,\alpha}$ on $\partial\Omega$, this also yields the decay estimate for the normal derivative of $v^{T,\alpha}$. We will construct suitable barrier functions and apply the maximum principle to obtain the global gradient estimate for $v^{T,\alpha}$. Similar arguments will provide the uniform gradient estimate for $w$. 

The second derivative estimate is obtained in two steps. First,
similar to the approach of Lions, Trudinger and Urbas
\cite{LionsTrudingerUrbas1986} to the Neumann problem for the
Monge--Amp\`ere equation, we construct auxiliary functions involving
$w$ and show that the global second derivative estimate for
$v^{T,\alpha}$ follows from a bound for its double normal derivative on
$\partial\Omega$. We then obtain this boundary double normal derivative
estimate by constructing a suitable barrier involving $w$ in a boundary
strip, inspired  by Ma and Qiu \cite{MaQiu2019} for 
 the $k$-Hessian Robin problem. The $C^1$-decay estimate
for $w$ is crucial in both steps, and no convexity assumption on
$\Omega$ is required.
	
	We use similar ideas for the eigenvalue problem \eqref{RobinEigen}, where the situation is a little bit more involved, since the error terms are no longer harmonic functions. Consider
	\begin{equation}\label{eq:intro-eigen-notation}
		\begin{aligned}
			v^{E,\alpha}&:=u^{E,\alpha}-u^{E,D},
		\end{aligned}
	\end{equation}
	then  $v^{E,\alpha}$ satisfies
	\begin{equation}\label{eq:intro-eigen-problem}
		\left\{
		\begin{aligned}
			-\Delta v^{E,\alpha}&=\lambda^\alpha v^{E,\alpha}+f^\alpha
			&&\text{in }\Omega,\\
			\partial_\nu v^{E,\alpha}+\alpha v^{E,\alpha}&=q^E
			&&\text{on }\partial\Omega.
		\end{aligned}
		\right.
	\end{equation}
	where $q^E=-\partial_\nu u^{E,D}$ and
	$f^\alpha:=(\lambda^\alpha-\lambda^D)u^{E,D}$.
	
    Notice that as $q^T$, the boundary function $q^E$ are independent of $\alpha$. Since
	$\partial\Omega$ is $C^{3,1}$, standard boundary regularity theory for the
	Dirichlet problems gives $\|q^E\|_{C^{2,\beta}(\partial\Omega)}\leq C$ where $C$ depends only on the $C^{3,1}$ bound for
	$\partial\Omega$, $\beta \in (0, 1)$ and $\lambda^D$.
    
	With the convergence estimate for the eigenvalues  and the $L^2$ decay for $v^{E,\alpha}$ by Filinovskiy  \cite{Filinovskiy2017}, Crasta and Fragal\`a \cite{CrastaFragala2021}, we derive uniform decay estimate
for $v^{E,\alpha}$
    by an argument based on the Alexandrov--Bakelman--Pucci (ABP) estimate
(see Lemma~\ref{lem:ABP-Robin} below). The gradient, and the second-order
	derivative decay estimates follow from similar approach as for the
	torsion problem.

	Once we establish the above decay estimates, the concavity properties follow easily. 
    Therefore, our results hold with the $C^{3,1}$ boundary control.

The paper is organized as follows. In Section~2, we prove the $C^2$-decay estimate for the Robin torsion
function as the Robin parameter tends to infinity. The corresponding
estimate for the first Robin eigenfunction is established in Section~3. Based on these decay estimates, we prove the concavity results in
Section~4.
    
	\section{Convergence for the Robin Torsion Problem}
    \reset
	In this section, we establish the $C^2$ decay estimate for
	$v^{T,\alpha}$. 

\subsection{Notations}
	We introduce first some notations. Unless stated otherwise, $\Omega$ denotes a bounded connected $C^{3,1}$
	domain.  Let $g\in C^{2}(\bar\Omega)$ and denote
	\[
	\partial_\nu g=Dg\cdot\nu,
	\qquad
	g_{\nu\tau}=g_{ij}\nu^i\tau^j,
	\qquad
	g_{\nu\nu}=g_{ij}\nu^i\nu^j,
	\]
	where $\tau, \nu \in \mathbb{R}^n$, $g_i = \partial_{x^i}g$, and $g_{ij} = \partial^2_{x^ix^j}g$. Moreover, repeated indices are summed from $1$ to $n$, by Einstein summation convention.
    
    A $C^k$ or $C^{k,1}$ bound includes the
	diameter, a uniform chart radius, and the corresponding norms of the local
	chart functions. All constants $C$ are positive, generic, and independent of large $\alpha$. 

    We fix also a smooth extension $h$ of the distance function ${\rm dist}(x, \partial\Omega)$ near the boundary. More precisely, $h\in C^{3,1}(\overline\Omega)$ with its norm controlled by the corresponding
	$C^{3,1}$ bound of $\partial\Omega$. For $\mu>0$, define $\Omega_\mu:=\{x\in\Omega:\operatorname{dist}(x,\partial\Omega)<\mu\}$.
    We assume that there is $\mu_*>0$ such that
	\[
	h(x) =\operatorname{dist}(x,\partial\Omega) \quad \mbox{in }\overline\Omega_{\mu_*}.
	\] 
	 Hence $|Dh|=1$ in $\overline\Omega_{\mu_*}$, and 
	$Dh=-\nu$ on $\partial\Omega$. The extension function $h$ can be built from ${\rm dist}(x, \partial\Omega)$ in a tubular neighborhood of $\partial\Omega$ and interior extension; see for
		instance \cite{CrastaMalusa2007}.

	Denote
	\[
	p_T=\min_{\partial\Omega}q^T,
	\qquad
	p_E=\min_{\partial\Omega}q^E,
	\qquad
	g_D=\lambda_2^D-\lambda^D,
	\]
	where $\lambda_2^D$ is the second Dirichlet eigenvalue, $q^T=-\partial_\nu u^{T,D}$, $q^E=-\partial_\nu u^{E,D}$ as defined previously. If $\Omega$ is
	uniformly convex, set
	\[
	\kappa_0=\min_{x\in\partial\Omega}
	\min_{1\leq i\leq n-1}\kappa_i(x)>0,
	\]
    where $\kappa_i(x)$, $1\leq i\leq n-1$, are the principal curvatures of $\partial\Omega$ at $x\in\partial\Omega$.
	The Hopf lemma and simplicity of the first eigenvalue give
	$p_T,p_E,g_D>0$. 
    
	Recall that the
	Dirichlet and Robin torsion functions satisfy respectively 
	\begin{equation}\label{TD1}
		\begin{cases}
		-\Delta u^{T,D}=1 & \text{in }\Omega,\\
		u^{T,D}=0 & \text{on }\partial\Omega,
		\end{cases}\qquad \mbox{and} \qquad \begin{cases}
		-\Delta u^{T,\alpha}=1 & \text{in }\Omega,\\
		\partial_\nu u^{T,\alpha}+\alpha u^{T,\alpha}=0
		& \text{on }\partial\Omega.
		\end{cases}
	\end{equation}

    So $v^{T,\alpha} = v^{T,\alpha} - v^{T,D}$ is harmonic and satisfies \eqref{eq:intro-torsion-problem}, i.e.
	\begin{equation}\label{eq:torsion-difference}
		\begin{cases}
		\Delta v^{T,\alpha}=0 & \text{in }\Omega,\\
		\partial_\nu v^{T,\alpha}+\alpha v^{T,\alpha}=q^T
		& \text{on }\partial\Omega.
		\end{cases}
	\end{equation}

	\subsection[The L-infinity decay estimate]
	{\texorpdfstring{The $L^\infty$ estimate}{The L-infinity estimate}}
Firstly, we give the uniform decay estimate for $v^{T,\alpha}$.
	\begin{lemma}\label{lem:torsion-c0} For any $\alpha>0$, we have \[ \|v^{T,\alpha}\|_{L^\infty(\Omega)} \leq \alpha^{-1}\|q^T\|_{L^\infty(\partial\Omega)} \leq C\alpha^{-1}, \] where $C>0$ depends only on $n$, the $C^2$ bound of $\partial\Omega$, and $\operatorname{diam}(\Omega)$. \end{lemma}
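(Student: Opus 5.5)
The plan is to apply the maximum principle to the harmonic function $v:=v^{T,\alpha}$ and then read off the boundary Robin condition at an extremum. Since $\Delta v=0$ in $\Omega$ and $v\in C^1(\overline\Omega)$ (both $u^{T,\alpha}$ and $u^{T,D}$ are $C^{2}$ up to the boundary for a $C^{3,1}$ domain), the weak maximum principle gives that $\max_{\overline\Omega}v$ is attained at some $x_0\in\partial\Omega$. If $v$ is constant, the Robin condition gives $\alpha v=q^T$ at every boundary point, hence $|v|\leq \alpha^{-1}\|q^T\|_{L^\infty(\partial\Omega)}$ directly. Otherwise, Hopf's lemma at $x_0$ (which applies because $\partial\Omega\in C^{3,1}$ satisfies the interior sphere condition) yields $\partial_\nu v(x_0)>0$. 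Even without Hopf, $\partial_\nu v(x_0)\geq 0$ holds simply because $x_0$ is a boundary maximum and $\nu$ points outward.

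From the Robin condition we then get $\alpha v(x_0)=q^T(x_0)-\partial_\nu v(x_0)\leq q^T(x_0)$, so $\max v\leq \alpha^{-1}\|q^T\|_{L^\infty}$. Applying the same argument to $-v$ at a boundary minimum $x_1$ gives $\partial_\nu v(x_1)\leq 0$, hence $\alpha v(x_1)\geq q^T(x_1)\geq -\|q^T\|_{L^\infty}$. Combining the two bounds gives the first inequality. In fact, since $q^T>0$, the argument also shows $v\geq 0$, although this is not needed.

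It remains to bound $\|q^T\|_{L^\infty(\partial\Omega)}=\|Du^{T,D}\|_{L^\infty(\partial\Omega)}$ with the stated dependence. I would use a barrier argument. First, $0\leq u^{T,D}\leq \operatorname{diam}(\Omega)^2/(2n)$ by comparison with $(R^2-|x-x^*|^2)/(2n)$ on a ball of radius $R=\operatorname{diam}\Omega$ containing $\Omega$. Next, at each $y\in\partial\Omega$ I would use a uniform exterior ball $B_\rho(z)$ tangent at $y$, whose radius $\rho$ is controlled by the $C^2$ bound, together with the standard barrier $\psi(x)=A(\rho^{2-n}-|x-z|^{2-n})+B(|x-z|^2-\rho^2)$ (or the logarithmic version when $n=2$). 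Choosing $A$ and $B$ depending on $n$, $\rho$ and $\operatorname{diam}\Omega$ makes $-\Delta\psi\geq 1$ in $\Omega$ and $\psi\geq u^{T,D}$ on $\partial\Omega$. Comparison then gives $0\leq u^{T,D}\leq\psi$, and since $\psi(y)=u^{T,D}(y)=0$, this bounds $|\partial_\nu u^{T,D}(y)|\leq|D\psi(y)|\leq C$.

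The only mildly delicate step is this last gradient bound, specifically making the barrier constants depend only on $n$, $\rho$ and the diameter. It is classical, but if the exterior-ball barrier is awkward, an alternative is to cite the Schauder boundary estimate for $u^{T,D}$, at the cost of a stronger regularity dependence than the lemma states.
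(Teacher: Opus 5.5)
Your proof is correct and matches the paper's argument for the first inequality: the extrema of the harmonic function $v^{T,\alpha}$ lie on the boundary, and the sign of $\partial_\nu v^{T,\alpha}$ there is read off the Robin condition (as you note, Hopf is not needed). The paper simply takes the bound $\|q^T\|_{L^\infty(\partial\Omega)}\le C$ for granted, whereas you prove it. Your exterior-ball barrier, with $B=-\frac{1}{2n}$ and $A$ large, is a valid way to do so and gives exactly the stated dependence on $n$, the $C^2$ bound of $\partial\Omega$, and $\operatorname{diam}(\Omega)$.
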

	\begin{proof}
		Since $v^{T,\alpha}$ is harmonic, its maximum and minimum are attained
		on the boundary. Let $x_1$ and $x_2$ be maximum and minimum points,
		respectively. The outer normal derivatives satisfy
		$\partial_\nu v^{T,\alpha}(x_1)\geq0$ and
		$\partial_\nu v^{T,\alpha}(x_2)\leq0$. Then  the boundary condition in
		\eqref{eq:torsion-difference}  gives
		\begin{align*}
		\alpha v^{T,\alpha}(x_1)\leq q^T(x_1),\qquad 
		\alpha v^{T,\alpha}(x_2)\geq q^T(x_2).
		\end{align*}
	So we are done.\end{proof}

	\subsection{The gradient estimate}

    By the $L^{\infty}$ decay estimate, we only have the boundedness of $\partial_{\nu}v^{T_{\alpha}}$ on $\partial\Omega$. To obtain the decay estimate of $|Dv^{T,\alpha}|$,
	we consider the asymptotic behavior of $\alpha v^{T,\alpha}$ as $\alpha$ goes to $\infty$. It is not difficult to imagine its limit is $u^1$,  the harmonic extension of $q^T$, i.e. $u^{1}$ solves
	\begin{equation}\label{harmonic-extension}
		\begin{cases}
		\Delta u^1=0 & \text{in }\Omega,\\
		u^1=q^T & \text{on }\partial\Omega.
		\end{cases}
	\end{equation}
	The $C^{2, \beta}$ bound for $q^T$ and standard elliptic
	estimates give
	\[
	\|Du^1\|_{L^\infty(\Omega)}\leq C,
	\]
	where $C$ depends only on $\Omega$. Denote the error term by
	\begin{equation}\label{wtorsion}
		w:=u^1-\alpha v^{T,\alpha}.
	\end{equation}
 On the boundary $\partial\Omega$,
	\[
	w=q^T-\alpha v^{T,\alpha}=\partial_\nu v^{T,\alpha}.
	\]
	Thus, $w$ solves
	\begin{equation}\label{eq:w-torsion}
		\begin{cases}
		\Delta w=0 & \text{in }\Omega,\\
		\partial_\nu w+\alpha w=\partial_\nu u^1
		& \text{on }\partial\Omega.
		\end{cases}
	\end{equation}
	The next lemma gives the $L^\infty$ decay estimate for $w$, and hence the
	normal derivative estimate for $v^{T,\alpha}$.
	\begin{lemma}\label{lem:w-c0} For any $\alpha>0$, we have \begin{align}\label{westimate1}
    \|w\|_{L^\infty(\Omega)} \leq \alpha^{-1}\|\partial_\nu u^1\|_{L^\infty(\partial\Omega)}\le C\alpha^{-1}. \end{align} 
    where $C$ depends only on $n$, the $C^3$ bound of $\partial\Omega$, and $\operatorname{diam}(\Omega)$. In particular, \[\|\partial_\nu v^{T,\alpha}\|_{L^\infty(\partial\Omega)} \leq C\alpha^{-1}. \] \end{lemma}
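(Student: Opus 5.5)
The plan is to repeat the argument of Lemma~\ref{lem:torsion-c0} verbatim, now applied to $w$ and the boundary problem \eqref{eq:w-torsion}. The key input is that $w$ is harmonic: both $u^1$ and $v^{T,\alpha}$ are harmonic, by \eqref{harmonic-extension} and \eqref{eq:torsion-difference}.

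First I would check the Robin condition in \eqref{eq:w-torsion}. On $\partial\Omega$ we have $\partial_\nu w=\partial_\nu u^1-\alpha\partial_\nu v^{T,\alpha}$ and $w=\partial_\nu v^{T,\alpha}$. Adding $\alpha w$ therefore gives $\partial_\nu w+\alpha w=\partial_\nu u^1$. Here $w\in C^1(\overline\Omega)$, since $u^1$ and $v^{T,\alpha}$ are $C^{2,\beta}$ up to the boundary by standard Schauder theory.

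Next I would use the strong maximum principle. It places the maximum of $w$ at some $x_1\in\partial\Omega$ and the minimum at some $x_2\in\partial\Omega$, or else $w$ is constant; in that case $\partial_\nu w=0$ and $\alpha w=\partial_\nu u^1$, which is even better. At a boundary maximum the outer normal derivative satisfies $\partial_\nu w(x_1)\ge0$. Hence $\alpha w(x_1)\le\partial_\nu u^1(x_1)\le\|\partial_\nu u^1\|_{L^\infty(\partial\Omega)}$. Symmetrically, $\partial_\nu w(x_2)\le 0$ gives $\alpha w(x_2)\ge-\|\partial_\nu u^1\|_{L^\infty(\partial\Omega)}$. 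Together these yield the first inequality in \eqref{westimate1}.

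It remains to bound $\|\partial_\nu u^1\|_{L^\infty(\partial\Omega)}\le\|Du^1\|_{L^\infty(\Omega)}\le C$. I would do this in two steps.

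\begin{itemize}
\item By Schauder theory for the Dirichlet problem \eqref{TD1} on a $C^{3,1}$ domain (in fact $C^{2,\beta}$ suffices), $u^{T,D}\in C^{2,\beta}(\overline\Omega)$. Hence $q^T=-\partial_\nu u^{T,D}\in C^{1,\beta}(\partial\Omega)$, since $\nu$ is $C^{2,1}$.
\item The global Schauder estimate for the harmonic extension \eqref{harmonic-extension} then gives $\|u^1\|_{C^{1,\beta}(\overline\Omega)}\le C\|q^T\|_{C^{1,\beta}(\partial\Omega)}$.
\end{itemize}

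The constants depend only on $n$, $\operatorname{diam}(\Omega)$ and the $C^3$ (indeed $C^{2,\beta}$) bound of $\partial\Omega$. The final statement follows immediately, since $\partial_\nu v^{T,\alpha}=w$ on $\partial\Omega$.

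The only step needing care is tracking the regularity constants in the bound $\|Du^1\|_{L^\infty}\le C$, so that they are uniform in terms of the stated boundary bounds. The maximum principle step itself is routine.
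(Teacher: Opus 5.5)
Your proposal is correct and is essentially the paper's own proof. Harmonicity of $w$ puts its extrema on $\partial\Omega$; the sign of $\partial_\nu w$ at those points, combined with $\partial_\nu w+\alpha w=\partial_\nu u^1$, gives the two-sided bound, and $\partial_\nu v^{T,\alpha}=w$ on $\partial\Omega$ yields the last claim. Your only addition is spelling out the Schauder argument for $\|Du^1\|_{L^\infty(\Omega)}\le C$, which the paper asserts just before the lemma from the regularity of $q^T$.
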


	\begin{proof}
		Since $w$ is harmonic, it attains its maximum and minimum on
		$\partial\Omega$. Let $x_1$ be a maximum point, then
		$\partial_\nu w(x_1)\geq0$. Hence the boundary condition in
		\eqref{eq:w-torsion} gives
		\begin{align*}
		\alpha w(x_1)\leq \partial_\nu u^1(x_1)
		\leq \|\partial_\nu u^1\|_{L^\infty(\partial\Omega)}.
		\end{align*}
		Let $x_2$ be a minimum point, then
		$\partial_\nu w(x_2)\leq0$, and therefore
		\[
		\alpha w(x_2)\geq \partial_\nu u^1(x_2)
		\geq-\|\partial_\nu u^1\|_{L^\infty(\partial\Omega)}.
		\]
		This gives \eqref{westimate1}. The estimate for
		$\partial_\nu v^{T,\alpha}$ follows because
		$w=\partial_\nu v^{T,\alpha}$ on $\partial\Omega$.
	\end{proof}
	
	Next, we prove the decay estimates for $Dv^{T,\alpha}$ and $Dw$.
	\begin{lemma}[Gradient estimates]\phantomsection
		\label{lem:torsion-c1}
        There are positive constants $C_{1}$ depending only on the $C^3$ bound for
			$\partial\Omega$, and $C_{2}$ depending only on the $C^{3,1}$ bound for
			$\partial\Omega$, such that, for large enough $\alpha$,
			\[
			\|Dv^{T,\alpha}\|_{L^\infty(\Omega)}
			\leq C_{1}\alpha^{-1}\quad \mbox{and} \quad
			\|Dw\|_{L^\infty(\Omega)}
			\leq C_{2}\alpha^{-1}.
			\]
	\end{lemma}

	\begin{proof}
		Let $h$ be the extension of distance function fixed above. Let
		\[
		G(x,\xi)= {\langle \xi, Dv^{T,\alpha}\rangle}
		{ + } \langle\xi,Dh\rangle w+\frac{w^2}{2}, \quad (x, \xi) \in \overline{\Omega}\times \mathbb S^{n-1}.
		\]
		Suppose that $G$ attains its maximum at $(x_0,\xi_0)$. We claim 
        \begin{align}
        \label{claimG}
            G(x_0,\xi_0) = \max_{\overline\Omega\times \mathbb S^{n-1}} G(x, \xi) \le C\alpha^{-1} \quad \mbox{if $\alpha$ is large enough}.
        \end{align}
        
        Let first $x_0\in\Omega$. Since $v^{T,\alpha}$ and $w$ are harmonic, we have
		\[
		\Delta G(x, \xi_0)
		= \langle\xi_0,D\Delta h\rangle w
		+ 2\langle\xi_0,D^2h Dw\rangle+|Dw|^2.
		\]
		Without specific mention, the derivatives are always for the variable $x$, and $|\cdot|$ stands for the Euclidean norm. At $x_0$, using $|\xi_0| - 1$, there holds
		\[
		0\geq\Delta G(x_0,\xi_0)
		\geq |Dw|^2-C|Dw|-C.
		\]
		Thus $|Dw(x_0)|\leq C$. Since
		$Dv^{T,\alpha}=\alpha^{-1}(Du^1-Dw)$, it follows that
		$G(x_0,\xi_0)\leq C\alpha^{-1}$ using \eqref{westimate1}.

		Assume next that $x_0\in\partial\Omega$. 
        
        {\bf Subcase 1.} Assume that $\xi_0$ is tangent to
		$\partial\Omega$ at $x_0$. We choose the coordinates such that $x_0=0$,
		$\nu(0)=-e_n$, $\xi_0=e_1$, and 
$\partial\Omega$ is represented locally by $x_n=\rho(x')$ with $\rho(0')=0$, $\nabla \rho(0')=0'$.

Notice that given any $2\le k \le n-1$, as $0 \ge G(x_0, \pm e_k) - G(x_0, e_1) = \pm v_k^{T, \alpha} - v_1^{T, \alpha}$ so that $v_1^{T, \alpha}(x_0) \ge |v_k^{T, \alpha}|(x_0)$ for $2\le k \le n-1$. On the other hand, as $\nu = (\nu^k)_{1\le k\le n}$ and $|\nu|^2 \equiv 1$, we get then $-\partial_1\nu^n(x_0) = \sum_{1 \le k \le n} \nu^k\partial_1\nu^k(x_0) = 0$.

		Differentiating
		$\partial_\nu v^{T,\alpha}=w$ in $e_1$-direction at $x_0$ gives
		\begin{align*}
		    -v^{T,\alpha}_{n1}
		= u^1_1- \alpha
		v^{T,\alpha}_1 - \sum_{k=1}^n \partial_1\nu^k v_k^{T, \alpha} & = u^1_1- \alpha
		v^{T,\alpha}_1 - \sum_{k=1}^{n-1} \partial_1\nu^k v_k^{T, \alpha}\\
        & \leq u^1_1 - \alpha v^{T,\alpha}_1 + Cv^{T,\alpha}_1.
		\end{align*}
Moreover,
		\[
		\partial_\nu G(x_0,e_1)
		=v^{T,\alpha}_{1\nu}
		+ \langle e_1,D^2h\,\nu\rangle w+w\,\partial_\nu w\geq0.
		\]
		 where $\partial_\nu w=\partial_\nu u^1-\alpha w$ on the boundary is uniformly bounded for $\alpha$ large by Lemma \ref{lem:w-c0}. Combining
		the preceding estimate for $v^{T,\alpha}_{1\nu}(x_0) = -v^{T,\alpha}_{n1}(x_0)$, the
		bounds for $w$, $u^1$ and $Dh$, we arrive at
		\[
		(\alpha-C)v^{T,\alpha}_1(x_0)\leq C.
		\]
		This gives $G(x_0,e_1)\leq C\alpha^{-1}$ for
		$\alpha$ large enough.

		{\bf Subcase 2.} Assume that $\xi_0$ is not tangent to $\partial\Omega$ at $x_0$. We write
		$\xi_0=a\tau+b\nu(x_0)$, where $\tau$ is tangent, unitary, and
		$a=\sqrt{1-b^2}<1$. Since $\partial_\nu v^{T,\alpha}=w$ on $\partial\Omega$,
		\[
		G(x_0,\xi_0)
		=aG(x_0,\tau)+ \frac{1-a}{2}w(x_0)^2 \le aG(x_0,\xi_0)+ \frac{1-a}{2}w(x_0)^2.
		\]
		Lemma \ref{lem:w-c0} yields then
		$G(x_0,\xi_0)\leq C\alpha^{-2}$ for large $\alpha$. 
        
        Therefore we get the claim \eqref{claimG}. Combining with the uniform estimate for $w$ and the harmonicity of $Dv^{T,\alpha}$, we obtain
		$$\|Dv^{T,\alpha}\|_{L^\infty(\Omega)} = \|Dv^{T,\alpha}\|_{L^\infty(\partial\Omega)} \leq C\alpha^{-1}.$$
		Remark that the boundary calculation uses only $D\nu$ and hence a $C^2$ bound for
		$\partial\Omega$, the interior calculation contains $D\Delta h$, which
		accounts for the $C^3$ bound of $\partial\Omega$.

		\smallskip
		For the gradient estimate of $w$, let $\widetilde u$ be solution to
	\[
	\begin{cases}
		\Delta \widetilde u=0 & \text{in }\Omega,\\
		\widetilde u=\partial_\nu u^1 & \text{on }\partial\Omega,
	\end{cases}
	\]
	and set $\widetilde w:=\widetilde u-\alpha w$. Since
	$\partial_\nu w+\alpha w=\partial_\nu u^1=\widetilde u$ on
	$\partial\Omega$, there holds
	$\widetilde w=\partial_\nu w$ on $\partial\Omega$. Hence
	$\widetilde w$ solves
	\[
	\begin{cases}
		\Delta \widetilde w=0 & \text{in }\Omega,\\
		\partial_\nu\widetilde w+\alpha\widetilde w
		=\partial_\nu\widetilde u & \text{on }\partial\Omega.
	\end{cases}
	\]
		Using the preceding $C^{2,\beta}$ bound for $q^T$, and the elliptic
regularity theory, applied successively to the Dirichlet problems for $u^1$
		and $\widetilde u$, we can claim
		\[
		\|D^2u^1\|_{L^\infty(\Omega)}
		+\|D\widetilde u\|_{L^\infty(\Omega)}\leq C.
		\]
		By  maximum principle, $\widetilde w$ satisfies
		\[
		\|\widetilde w\|_{L^\infty(\Omega)}
		\leq\alpha^{-1}
		\|\partial_\nu\widetilde u\|_{L^\infty(\partial\Omega)}
		\leq C\alpha^{-1}.
		\]
		Set now
		\[
		\widetilde G(x,\xi)
		= \langle \xi, Dw\rangle {\color{red} + }\langle\xi,Dh\rangle\widetilde w
		+\frac{\widetilde w^2}{2}.
		\]
        Using $\widetilde G$, $\widetilde u$, $w$, and $\widetilde w$ in place of $G$, $u^1$, $v^{T, \alpha}$, and $w$, the repeat of the previous estimates for $\|Dv^{T,\alpha}\|_{L^\infty(\Omega)}$ yields $\|Dw\|_{L^\infty(\Omega)}
			\leq C\alpha^{-1}$ for $\alpha$ large. So we omit the details.
	\end{proof}
	
	\subsection{The second order derivative estimate}
	
	We first reduce the second-order derivative estimate to the bound of double normal derivatives on
	the boundary. Remark that the convexity of the domain is not needed for this step.
	\begin{lemma}\label{lem:torsion-hessian-reduction}
		There is $C>0$,
		depending only on the $C^{3,1}$ bound of
		$\partial\Omega$, such that
		\begin{align}
			\|D^2v^{T,\alpha}\|_{L^\infty(\Omega)}
			\leq C\left(\alpha^{-1}
			+\|v^{T,\alpha}_{\nu\nu}\|_{L^\infty(\partial\Omega)}\right)
			\end{align}
		for all $\alpha$ large enough.
	\end{lemma}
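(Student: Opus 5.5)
The plan is to imitate the gradient argument of Lemma~\ref{lem:torsion-c1}, one derivative higher, in the spirit of Lions--Trudinger--Urbas. Write $v=v^{T,\alpha}$. Since $v$ is harmonic, every second derivative $v_{\xi\xi}$ with a constant unit vector $\xi$ is harmonic, so $\max_{\overline\Omega}|D^2v|$ is attained on $\partial\Omega$. It therefore suffices to bound $v_{\xi\xi}$ at boundary points, with error $C(\alpha^{-1}+M_\nu)$, where $M_\nu:=\|v_{\nu\nu}\|_{L^\infty(\partial\Omega)}$.

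I would consider the auxiliary function
\[
H(x,\xi)=v_{\xi\xi}-2\langle\xi,Dh\rangle\langle\xi,D(w+\alpha^{-1}\cdots)\rangle\;\text{-type correction}+K\alpha^{-1}|x|^2,
\]
concretely $H(x,\xi)=v_{\xi\xi}+2\langle\xi,Dh\rangle\langle\xi',Dw\rangle+A|Dv|^2+B\alpha^{-1}|x|^2$, where $\xi'=\xi-\langle\xi,Dh\rangle Dh$. The correction term is built so that on $\partial\Omega$ its normal derivative cancels the bad term arising from differentiating the boundary relation $\partial_\nu v=w$ twice tangentially. The interior case follows as in Lemma~\ref{lem:torsion-c1}. $\Delta$ of the correction involves $D^2w$, $D^3h$ and $D^4h$; the latter is why $C^{3,1}$ is needed. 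The cross terms $2D^2h\cdot D^2w$ must be absorbed. Since $D^2w=\alpha D^2(\alpha^{-1}u^1-v)$, I would instead express things through $v$: $D^2w=D^2u^1-\alpha D^2 v$, which is not small, so $w$ should enter only with its gradient, bounded by $C\alpha^{-1}$ from Lemma~\ref{lem:torsion-c1}. The cross terms then give $O(|D^2v|)$. These are absorbed by $A|Dv|^2$: its Laplacian is $2A|D^2v|^2$, which dominates linear terms in $|D^2v|$ up to constants. Those constants are $O(\alpha^{-1})$ after scaling $A$ suitably, or one replaces $A|Dv|^2$ by $A\alpha|Dv|^2$, whose Laplacian is $2A\alpha|D^2v|^2\ge$ linear terms minus $C\alpha^{-1}$, while $A\alpha|Dv|^2\le C\alpha^{-1}$ by Lemma~\ref{lem:torsion-c1}.

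The boundary case follows Lemma~\ref{lem:torsion-c1}. If $\xi$ is not tangent, decompose $\xi=a\tau+b\nu$. Then $v_{\xi\xi}=a^2v_{\tau\tau}+2abv_{\tau\nu}+b^2v_{\nu\nu}$. Here $v_{\tau\nu}$ is obtained by differentiating $\partial_\nu v=w$ tangentially: $v_{\tau\nu}=w_\tau-\langle D_\tau\nu,Dv\rangle=O(\alpha^{-1})$ by Lemma~\ref{lem:torsion-c1}. Hence $H(x_0,\xi_0)\le a^2\max H+C(\alpha^{-1}+M_\nu)$, which gives the bound when $a<1$, as in Subcase~2. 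If $\xi_0=\tau$ is tangent, take $\partial_\nu H(x_0,\tau)\ge 0$. Differentiating the boundary identity $\partial_\nu v+\alpha v=q^T$ twice along $\tau$ gives
\[
v_{\tau\tau\nu}=-\alpha v_{\tau\tau}+O(|D^2v|)+O(1)\cdot(\text{lower order}),
\]
where curvature terms produce $\kappa(\tau)v_{\nu\nu}$-type and $D_\tau\nu\cdot D^2v$ terms. The remaining derivative terms from the correction and from $A\alpha|Dv|^2$ are bounded by $C(1+|D^2v|)$. One obtains $(\alpha-C)v_{\tau\tau}(x_0)\le C\max|D^2v|+C$, hence $\max H\le C\alpha^{-1}(1+\max|D^2v|)$.

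Combining the cases gives $\max|D^2v|\le C(\alpha^{-1}+M_\nu)+C\alpha^{-1}\max|D^2v|$, and for $\alpha$ large the last term is absorbed. The same is done for $-v_{\xi\xi}$. The main obstacle is the tangent boundary case. There, the terms $\langle D_\tau\nu,\cdot\rangle$ and the third derivatives of $q^T$ must be controlled without losing a factor of $\alpha$, so that the net coefficient $\alpha-C$ survives. The first thing I would try is to bound $q^T_{\tau\tau}$ via $\|q^T\|_{C^{2,\beta}}$, noting that the boundary term $\alpha v$, differentiated twice, produces exactly $\alpha v_{\tau\tau}$ plus $\alpha\langle D_\tau\tau,Dv\rangle=O(1)$.
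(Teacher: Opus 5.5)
There are two genuine gaps, in the interior case and in the non-tangential boundary case. Your tangential boundary case, which uses $\partial_\nu H(x_0,\tau)\ge0$ and differentiates the boundary condition twice, is fine and matches the paper's.

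\textbf{Interior case.} The Laplacian of your correction $2\langle\xi,Dh\rangle\langle\xi',Dw\rangle$ contains cross terms such as $2\langle D^2h\,\xi,D^2w\,\xi\rangle$ and $\langle D^2h,D^2w\rangle$. These have size $|D^2w|$, and since $D^2w=D^2u^1-\alpha D^2v$ they are $O(1+\alpha|D^2v|)$, not $O(|D^2v|)$ as you assert. Against them, $\Delta(A\alpha|Dv|^2)=2A\alpha|D^2v|^2$ only gives $2A\alpha|D^2v|^2\le C+C\alpha|D^2v|$ at an interior maximum. That means $|D^2v(x_0)|\le C$, which is useless. Making the term strong enough ($A\sim\alpha$) turns it into an $O(1)$ perturbation of $v_{\xi\xi}$. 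The missing idea is the term $\tfrac12|Dw|^2$ used in the paper. By Lemma~\ref{lem:torsion-c1} it is $O(\alpha^{-2})$, yet, since $w$ is harmonic, $\Delta\bigl(\tfrac12|Dw|^2\bigr)=|D^2w|^2\ge\tfrac{\alpha^2}{2}|D^2v|^2-C$. This absorbs all the $O(|D^2w|)$ cross terms and gives $|D^2v(x_0)|\le C\alpha^{-1}$. Note that $|Dw|^2=|\alpha Dv-Du^1|^2$ is exactly $\alpha^2|Dv|^2$ with its $O(1)$ part removed. This is why the gradient decay of $w$, and not only of $v$, is needed.

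\textbf{Non-tangential boundary case.} This case also does not give a uniform bound. Set $E:=A\alpha|Dv|^2+B\alpha^{-1}|x|^2$ and $\xi_0=a\tau+b\nu$. Your correction equals $-2ab\,w_\tau$, so $H(x_0,\xi_0)=a^2H(x_0,\tau)+b^2(v_{\nu\nu}+E)-2ab\langle D_\tau\nu,Dv\rangle$. The residual is $O(ab\,\alpha^{-1})$ rather than $O(b^2\alpha^{-1})$. Hence $H(x_0,\tau)\le H(x_0,\xi_0)$ only yields $b^2\max H\le b^2(M_\nu+C\alpha^{-1})+Cab\,\alpha^{-1}$, which degenerates as $b\to0$, and the maximizing direction may be arbitrarily close to tangential. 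So your inequality $H(x_0,\xi_0)\le a^2\max H+C(\alpha^{-1}+M_\nu)$ cannot be divided by $1-a^2$ uniformly. In Lemma~\ref{lem:torsion-c1} the error carried the factor $1-a$, which is what made that division harmless. The paper avoids this with the correction $2\langle\xi,Dh\rangle\langle\widetilde\xi,Dw+D^2h\,Dv\rangle$. Since $D_\tau\nu=-D^2h\,\tau$ on $\partial\Omega$, one has exactly $v_{\nu\tau}=\langle\tau,Dw+D^2h\,Dv\rangle$, hence $W(x_0,\xi_0)=a^2W(x_0,\tau)+b^2W(x_0,\nu)$. This extra $D^2h\,Dv$ term is what brings $D^4h$ into $\Delta W$, and hence requires the $C^{3,1}$ assumption and the mollification. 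Your correction's Laplacian involves only $D^3h$.
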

	\begin{proof}
		Consider the following auxiliary function
		\begin{align}
			W(x,\xi)=v^{T,\alpha}_{\xi\xi}(x) {+ } 2\langle \xi, {Dh}\rangle \langle \widetilde \xi, Dw {+ }D^2h D v^{T,\alpha}\rangle +\frac{|Dw|^2}{2},
		\end{align}
		where $\widetilde \xi=\xi-\langle\xi,Dh \rangle Dh$.
		Assume that $W$ attains its maximum over $\overline\Omega\times {\mathbb S}^{n-1}$ at $(x_0, \xi_0)$. We may assume $v^{T,\alpha}_{\xi_0\xi_0}(x_0)\ge {\alpha}^{-1}$,
		otherwise $W(x_0,\xi_0)\le C\alpha^{-1}$.
		
		From $W(x_0,e_i)\le W(x_0,\xi_0)$, we obtain $v^{T,\alpha}_{ii}(x_0)\le v^{T,\alpha}_{\xi_0\xi_0}+C\alpha^{-1}$. Thus
		$$v^{T,\alpha}_{ii}=-\sum_{j\neq i} v^{T,\alpha}_{jj}\ge -(n-1)v^{T,\alpha}_{\xi_0\xi_0}-C\alpha^{-1}.$$
		
		Therefore
		\begin{align*}
			\max_{1\le i\le n} |v^{T,\alpha}_{ii}|\le C_n v^{T,\alpha}_{\xi_0\xi_0}+C\alpha^{-1}.
		\end{align*}
		For $i\neq j$, from
		\begin{align*}
			2v^{T,\alpha}_{ij}
			=2v^{T,\alpha}_{\frac{e_i+e_j}{\sqrt{2}},\,
			\frac{e_i+e_j}{\sqrt{2}}}
			-v^{T,\alpha}_{ii}-v^{T,\alpha}_{jj}
		\end{align*}
		we obtain $|v^{T,\alpha}_{ij}|\le 2n v^{T,\alpha}_{\xi_0\xi_0}+C\alpha^{-1}$. Hence
		\begin{align}
        \label{estimate-c2-vTa}
			|D^2 v^{T,\alpha}(x_0)|\le C_n v^{T,\alpha}_{\xi_0\xi_0}(x_0)+C\alpha^{-1}.
		\end{align}
		
		\textbf{Case 1: $x_0\in \Omega$.}
		Recalling that $w$ and $v^{T, \alpha}$ are harmonic and $w = u^1 - \alpha v^{T, \alpha}$, direct differentiation at $x_0$ gives
		\begin{align*}
			0\ge \Delta W
			& \ge |D^2 w|^2
			- C\bigl(|D^2w|+|D^2v^{T,\alpha}|
			+|Dw|+|Dv^{T,\alpha}|\bigr)\\
			& \ge -C+\frac{{\alpha}^2}{4}|D^2v^{T,\alpha}|^2.
		\end{align*}
		We get $|D^2v^{T,\alpha}(x_0)| \le C\alpha^{-1}$ for large $\alpha$, hence 
		\begin{align}
        \label{estimate-W}
			W(x,\xi)\le W(x_0,\xi_0)\le C\alpha^{-1}, \quad \forall\; (x,\xi) \in \overline\Omega\times {\mathbb S}^{n-1}.
		\end{align}
        
        By the definition of $W$, $D^2W$ involves at most fourth order derivatives
		of $h$, the extension of distance function. This is the only point where the Lipschitz bound for $D^3h$ is
		used. We give the passage to $C^{3,1}$ bound more properly. Choose a ball
		$B\Subset\Omega$ centred at $x_0$ and mollify $h$ inside $B$. The
		mollifications $h_\varepsilon$ satisfy
		\[
		\|D^j h_\varepsilon\|_{L^\infty(B)}\leq C\|h\|_{C^{3,1}}
		\quad \forall\; j \le 4,
		\]
		with constants independent of $\varepsilon > 0$ sufficiently small. 
		We consider $$W_{\varepsilon}=v^{T,\alpha}_{\xi\xi}(x) { + } 2\langle {\xi}, {Dh_{\varepsilon}}\rangle \langle \widetilde \xi, Dw { + } D^2h_{\varepsilon} Dv^{T,\alpha}\rangle +\frac{|Dw|^2}{2}-\delta\big(|x-x_0|^2+|\xi-\xi_0|^2\big).$$  
        The corresponding local
		maxima for $W_\varepsilon$ converge to
		$(x_0,\xi_0)$. The preceding calculation is classical at those local maxima points, and
		provides estimate independent of $\varepsilon$. Letting first
		$\varepsilon\to0$ and then $\delta\to0$ proves the same inequality for
		$h\in C^{3,1}$. In other words, the constant $C$ in \eqref{estimate-W} depends only on the $C^{3,1}$ regularity of the domain.
        
	\textbf{Case 2: $x_0\in\partial \Omega$ and $\xi_0$ is non-tangential at $x_0$.}
Here we write $\xi_0=a\tau+b\nu$, where $\tau$ is a unit tangent vector at $x_0$, and
	$a=\sqrt{1-b^2} < 1$. 
	
	Differentiating the boundary condition $\partial_\nu v^{T,\alpha}=w$ in a tangential direction, Lemma \ref{lem:torsion-c1} implies that
	\begin{align}\label{vnutau}
		v^{T,\alpha}_{\nu\tau} = - \sum_{k=1}^n v^{T,\alpha}_k\nu^k_{\tau} + w_\tau = O(\alpha^{-1}).
	\end{align}
	Using \eqref{vnutau} and  the definition of $W$, we have
	\begin{align*}
	W(x_0,\xi_0) & = a^2v^{T,\alpha}_{\tau\tau}+b^2v^{T,\alpha}_{\nu\nu}  { + 2ab v^{T,\alpha}_{\nu\tau} -2ab\langle \tau, Dw+D^2h D v^{T,\alpha}\rangle} + \frac{|Dw|^2}{2}\\
    & = a^2W(x_0,\tau) + b^2v^{T,\alpha}_{\nu\nu}+ \frac{b^2}{2}|Dw|^2 \\
	&\le a^2W(x_0,\xi_0)+b^2v^{T,\alpha}_{\nu\nu}+\frac{b^2}{2}|Dw|^2 \\
    & = a^2W(x_0,\xi_0)+b^2W(x_0,\nu) .
	\end{align*}
	Thus
	\begin{align*}    
		W(x_0,\xi_0)\le W(x_0,\nu)  \le C\big({\alpha}^{-1}+\|v^{T,\alpha}_{\nu\nu}\|_{L^\infty(\partial\Omega)}\big),
	\end{align*}
	where we used the decay estimate for $|Dw|$.
	
	\textbf{Case 3: $x_0 \in \partial\Omega$ and $\xi_0$ is tangential at $x_0$.}
	As before, we can assume that $x_0=0$, $\xi_0=e_1$, and $\nu(x_0)=-e_n$. Write the
	boundary near $x_0$ as $x_n=\rho(x')$, where
	$\rho(0')=0$ and $D\rho(0')=0'$. Differentiating
	$\partial_\nu v^{T,\alpha} =w$ twice along $e_1$, and using
	$w=u^1-\alpha v^{T,\alpha}$, there holds
	\[
	v^{T,\alpha}_{n11}
	=\alpha v^{T,\alpha}_{11}
	+O\bigl(1+|Dv^{T,\alpha}|+ |D^2v^{T,\alpha}|\bigr) = \alpha v^{T,\alpha}_{11}
	+O\bigl(1 + |D^2v^{T,\alpha}|\bigr).
	\]
	Moreover, since $e_n$ is the inward normal and $W(\cdot,e_1)$ reaches the maximum at
	$x_0$, $W_n(x_0,e_1)\leq0$. Direct differentiation gives
	\[
		W_n(x_0,e_1)=v^{T,\alpha}_{11n}+ \sum_{k=1}^n w_kw_{kn} + O\bigl(|Dv^{T,\alpha}|+|Dw|\bigr).
		\]
	Using Lemma~\ref{lem:torsion-c1}, the identity
	$D^2w=D^2u^1-\alpha D^2v^{T,\alpha}$, and \eqref{estimate-c2-vTa}, we obtain
	\[
	0\geq W_n(x_0,e_1)
	\geq(\alpha-C)v^{T,\alpha}_{11}-C.
	\]
	We deduce then $v^{T,\alpha}_{11}\leq C\alpha^{-1}$ for large $\alpha$, hence 
	\[
	W(x_0,\xi_0)\leq C\Big(\alpha^{-1}
	+\|v^{T,\alpha}_{\nu\nu}\|_{L^\infty(\partial\Omega)}\Big).
	\]
    The proof is completed.
	\end{proof}
    
	The next step is to control the double normal derivatives.   
	  
\begin{lemma}[Double normal derivative estimate] 
\label{lem:torsion-double-normal}  
	There is a uniform constant $C>0$,  
	depending only on $C^{3,1}$ bound of  
	$\partial\Omega$, such that  
	\[  
\|v^{T,\alpha}_{\nu\nu}\|_{L^\infty(\partial\Omega)}  
	\leq C\alpha^{-1}  
	\]  
	holds for $\alpha$ sufficiently large.  
\end{lemma}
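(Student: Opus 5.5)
The approach is a barrier argument in a boundary strip. The key quantity is the defect of the boundary condition $\partial_\nu v^{T,\alpha}=w$, extended inside $\Omega$ by means of $Dh$. Set
\[
M:=\|v^{T,\alpha}_{\nu\nu}\|_{L^\infty(\partial\Omega)},\qquad
\Psi:=-\langle Dv^{T,\alpha},Dh\rangle-w \quad\text{in }\overline\Omega_{\mu}.
\]
Since $Dh=-\nu$ on $\partial\Omega$, the boundary condition in \eqref{eq:w-torsion} gives $\Psi=\partial_\nu v^{T,\alpha}-w=0$ on $\partial\Omega$. The goal is to show $|\partial_\nu\Psi|\le C\alpha^{-1}+\tfrac12 M$ on $\partial\Omega$ and then read off $v^{T,\alpha}_{\nu\nu}$ from $\partial_\nu\Psi$.

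\textbf{Step 1: relate $v^{T,\alpha}_{\nu\nu}$ to $\partial_\nu\Psi$.} Differentiating along $\nu$ at a boundary point gives
\[
\partial_\nu\Psi=v^{T,\alpha}_{\nu\nu}+O(|Dv^{T,\alpha}|)-\partial_\nu w .
\]
The term $O(|Dv^{T,\alpha}|)$ is $O(\alpha^{-1})$ by Lemma~\ref{lem:torsion-c1}. For $\partial_\nu w$, I use the function $\widetilde w=\widetilde u-\alpha w$ from the proof of Lemma~\ref{lem:torsion-c1}. On $\partial\Omega$ it satisfies $\partial_\nu w=\widetilde w$, and $\|\widetilde w\|_{L^\infty}\le C\alpha^{-1}$. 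Hence
\[
|v^{T,\alpha}_{\nu\nu}|\le|\partial_\nu\Psi|+C\alpha^{-1}\quad\text{on }\partial\Omega .
\]

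\textbf{Step 2: interior equation and inner boundary.} Since $v^{T,\alpha}$ and $w$ are harmonic,
\[
\Delta\Psi=-2\,\mathrm{tr}(D^2h\,D^2v^{T,\alpha})-\langle Dv^{T,\alpha},D\Delta h\rangle .
\]
Lemma~\ref{lem:torsion-hessian-reduction} and Lemma~\ref{lem:torsion-c1} then give $|\Delta\Psi|\le C_0(\alpha^{-1}+M)$ in $\Omega_\mu$. Only $D^3h$ appears here, so the $C^{3,1}$ bound suffices. On the inner boundary $\{h=\mu\}$, Lemmas~\ref{lem:w-c0} and \ref{lem:torsion-c1} give $|\Psi|\le C\alpha^{-1}$.

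\textbf{Step 3: the barrier.} Take
\[
\phi=K\,(h-Nh^2),\qquad 0<\mu\le\min\{\mu_*,1/(2N)\}.
\]
In $\Omega_\mu$ we have $|Dh|=1$, so
\[
\Delta\phi=K\bigl(\Delta h-2N-2Nh\Delta h\bigr)\le -KN
\]
once $N$ is large, depending on $\|\Delta h\|_\infty$. Also $\phi=0$ on $\partial\Omega$ and $\phi\ge K\mu/2$ on $\{h=\mu\}$. Choose
\[
K=\frac{C_0(\alpha^{-1}+M)}{N}+\frac{2C\alpha^{-1}}{\mu}.
\]
Then $\pm\Psi-\phi$ is subharmonic in $\Omega_\mu$ and $\le0$ on $\partial\Omega_\mu$, so $|\Psi|\le\phi$ there. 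Since both $\Psi$ and $\phi$ vanish on $\partial\Omega$, this yields $|\partial_\nu\Psi|\le|\partial_\nu\phi|=K$ on $\partial\Omega$.

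\textbf{Step 4: absorption.} Combining with Step 1, and taking $\mu=1/(2N)$,
\[
M\le \frac{C_0}{N}M+C\bigl(1+N\bigr)\alpha^{-1}.
\]
Fixing $N$ so that $C_0/N\le\tfrac12$ absorbs the $M$ term and gives $M\le C\alpha^{-1}$ for $\alpha$ large.

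\textbf{Main obstacle.} The delicate point is this absorption. The estimate for $\Delta\Psi$ feeds back $M$ through Lemma~\ref{lem:torsion-hessian-reduction}, and that $M$-term must come with a coefficient $C_0/N$ that can be made small independently of $\alpha$. At the same time, the inner-boundary cost $C\alpha^{-1}/\mu\sim CN\alpha^{-1}$ is harmless, because $N$ is fixed before $\alpha\to\infty$.

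A second point to check carefully is the identification $\partial_\nu w=\widetilde w=O(\alpha^{-1})$ on $\partial\Omega$. Using only Lemma~\ref{lem:w-c0} would give $\partial_\nu w=O(1)$, which is too weak. With the $\widetilde w$ identity, the gradient decay of $w$ from Lemma~\ref{lem:torsion-c1} is not even needed for this term.
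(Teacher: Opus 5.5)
Your proposal is correct and is essentially the paper's proof. It uses the same boundary-defect function (your $\Psi$ is the paper's $-\varphi$, with $\varphi=\langle Dv^{T,\alpha},Dh\rangle+w$), the same barrier $h-ch^2$ on a strip of width comparable to $1/c$, and the same inputs: $|\Delta\varphi|\le C(M+\alpha^{-1})$ from Lemma~\ref{lem:torsion-hessian-reduction}, and $\partial_\nu w=O(\alpha^{-1})$ on $\partial\Omega$. The differences are cosmetic: the paper scales the barrier by $M/4$ and argues by contradiction instead of absorbing $\tfrac{C_0}{N}M$, and it bounds $\partial_\nu w$ directly by $\|Dw\|_{L^\infty}\le C\alpha^{-1}$ rather than through $\widetilde w$.
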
  
  
\begin{proof}  
	Set 
	$
M:=\|v^{T,\alpha}_{\nu\nu}\|_{L^\infty(\partial\Omega)}$   
	{and} 
	$
	\varphi:=\langle Dv^{T,\alpha},Dh\rangle+w.  
	$
	Since $Dh=-\nu$ and $w=\partial_\nu v^{T,\alpha}$ on  
	$\partial\Omega$, we have $\varphi=0$ on  
	$\partial\Omega$. Recall that  
	$v^{T,\alpha}$ and $w$ are harmonic,  we have
	\[  
	\Delta\varphi  
	=2\langle D^2v^{T,\alpha},D^2h\rangle  
	+\langle Dv^{T,\alpha},D(\Delta h)\rangle.  
	\]  
	The preceding bound for $w$, together with  
	Lemmas~\ref{lem:torsion-c1} and  
	\ref{lem:torsion-hessian-reduction}, gives us positive constants  
	$C_1$ and $\alpha_1$ such that for any $\alpha\geq\alpha_1$  
	\begin{equation}\label{eq:phi-distance-bounds}  
	|\varphi|\leq C_1\alpha^{-1},  
	\quad  
	|\Delta\varphi|\leq C_1\big(M+\alpha^{-1}\big)  
	\quad\text{in }\Omega_{\mu_*}  
	\end{equation}  
	and  
	\begin{equation}\label{eq:torsion-distance-boundary-error}  
	|\partial_\nu w|\leq C_1\alpha^{-1}  
	\quad\text{on }\partial\Omega.  
	\end{equation}  
  
	Choose  
	\[  
	C_0\geq  
	\|\Delta h\|_{L^\infty(\Omega_{\mu_*})}+8C_1+1,  
	\quad  
	\mu_0:=\min\left\{\mu_*,\frac{1}{2C_0}\right\} 
	<\frac{1}{2}.  
	\]  
	We claim that  
	\[  
	M\leq\frac{16C_1}{3\mu_0}\alpha^{-1}, 
	\quad \forall\;\alpha\geq\alpha_1.  
	\]  
	Suppose the contrary, that is,   
	\begin{equation}\label{eq:M-large-distance}  
	\alpha\geq\alpha_1\;\mbox{ and }\; 
	M>\frac{16C_1}{3\mu_0}\alpha^{-1}.  
	\end{equation}  
	Take  
	\[  
	g:=h-\frac{C_0}{2}h^2  
	\quad\text{in }\Omega_{\mu_0}.  
	\]  
	In $\Omega_{\mu_0}$, there hold  
	\begin{equation}\label{eq:h-distance-properties}  
	g\geq\frac34h,  
	\quad  
	\Delta g=(1-C_0h)\Delta h-C_0\leq-8C_1-1.  
	\end{equation}  
	Moreover,  
	\begin{equation}\label{eq:h-distance-boundary}  
	g=0,\quad \partial_\nu g=-1  
	\quad\text{on }\partial\Omega.  
	\end{equation}  
  
	By definition of $M$, one of the following two cases occurs:  
	there is $x_+$ or $x_-\in\partial\Omega$ such that  
	$M=v^{T,\alpha}_{\nu\nu}(x_+)$ or  
	$M=-v^{T,\alpha}_{\nu\nu}(x_-)$.  
  
	\textbf{Case 1: $x_+\in\partial\Omega$ and  
	$M=v^{T,\alpha}_{\nu\nu}(x_+)$.} 
    
    \smallskip
\noindent
	Let $\alpha$ satisfy \eqref{eq:M-large-distance}, consider the function  
	\[  
	P:=\varphi-\frac{M}{4}g.  
	\]  
	So $P=0$ on $\partial\Omega$. On the inner boundary of  
	$\Omega_{\mu_0}$, i.e. $\{h=\mu_0\}$,  
	\eqref{eq:phi-distance-bounds},  
	\eqref{eq:h-distance-properties}, and  
	\eqref{eq:M-large-distance} give  
	\[  
	P\leq C_1\alpha^{-1}-\frac{3\mu_0}{16}M<0.  
	\]  
	Let $\max_{\overline\Omega_{\mu_0}}P$ be attained at $x_1$.  
	If $P(x_1)>0$, then $x_1\in\Omega_{\mu_0}$, so that  
	\begin{align*}  
	0\geq\Delta P(x_1)  
	&\geq-C_1\left(M+\alpha^{-1}\right)  
	+\frac{M}{4}(8C_1+1)\\  
	&=\Big(C_1+\frac14\Big)M-C_1\alpha^{-1}\\  
	&>\Big(C_1+\frac14-\frac{3\mu_0}{16}\Big)M>0,  
	\end{align*}  
	where 
	where we used \eqref{eq:M-large-distance} and  
	$\mu_0<\frac12$.  
	This is impossible, hence  
	$P\leq0$ in $\overline\Omega_{\mu_0}$.  
  
	Furthermore, since  
	$h(x)=\operatorname{dist}(x,\partial\Omega)$ in  
	$\Omega_{\mu_*}$, differentiating $|Dh|^2=1$ yields 
	\[ 
	D^2hDh=0\quad\text{in }\Omega_{\mu_*}. 
	\] 
	Since $Dh=-\nu$ on $\partial\Omega$, it follows that 
	\begin{equation}\label{eq:distance-normal-hessian} 
	D^2h\,\nu=0\quad\text{on }\partial\Omega. 
	\end{equation} 
	Therefore, on $\partial\Omega$, 
	\[ 
	\partial_\nu\varphi 
	=-v^{T,\alpha}_{\nu\nu}+\partial_\nu w. 
	\] 
	Since $P\leq0$ in $\Omega_{\mu_0}$ and  
	$P=0$ on $\partial\Omega$,  
	$\partial_\nu P(x_+)\geq0$. By  
	\eqref{eq:h-distance-boundary},  
	\[  
	0\leq\partial_\nu P(x_+)  
	\leq-\frac34M+C_1\alpha^{-1},  
	\]  
	consequently  
	\[  
	M\leq\frac{4C_1}{3}\alpha^{-1}.  
	\]  
	This contradicts \eqref{eq:M-large-distance}, since  
	$\mu_0<\frac12$.  
  
	\textbf{Case 2: $x_-\in\partial\Omega$ and  
	$M=-v^{T,\alpha}_{\nu\nu}(x_-)$.}  

    \smallskip
\noindent
	The proof is similar, and we will go through quickly.  
	Let $\alpha$ satisfy \eqref{eq:M-large-distance} and  
	\[  
	\widetilde P:=-\varphi-\frac{M}{4}g.  
	\]  
	As above, $\widetilde P=0$ on $\partial\Omega$ and  
	$\widetilde P<0$ on $\{h=\mu_0\}$. If  
	$\max_{\overline{\Omega}_{\mu_0}}\widetilde P$ is positive  
	and is reached in $\Omega_{\mu_0}$, then 
	\begin{align*}  
	0\geq\Delta\widetilde P 
	&\geq-C_1\left(M+\alpha^{-1}\right) 
	+\frac{M}{4}(8C_1+1)\\ 
	&=\Big(C_1+\frac14\Big)M-C_1\alpha^{-1}\\ 
	&>\Big(C+\frac14-\frac{3\mu_0}{16}\Big)M>0, 
	\end{align*} 
	a contradiction. Hence  
	$\widetilde P\leq0$ in $\overline\Omega_{\mu_0}$.  
	Using  
	\[ 
	\partial_\nu\varphi 
	=-v^{T,\alpha}_{\nu\nu}+\partial_\nu w 
	\quad\text{on }\partial\Omega, 
	\] 
	we obtain 
	\[ 
	0\leq\partial_\nu\widetilde P(x_-) 
	\leq-\frac34M+C_1\alpha^{-1}. 
	\] 
	Consequently, 
	\[ 
	M\leq\frac{4C_1}{3}\alpha^{-1}, 
	\] 
	which again contradicts \eqref{eq:M-large-distance}. Therefore, 
	the desired estimate holds with 
	\[ 
	C_{T}=\frac{16C_1}{3\mu_0} 
	\] 
	for every $\alpha\geq\alpha_1$. 
\end{proof}
	Combining Lemmas \ref{lem:torsion-c0}-\ref{lem:torsion-double-normal}, we conclude that for large enough $\alpha$,
	$\|v^{T,\alpha}\|_{C^2(\overline\Omega)}\leq C\alpha^{-1}.$
    Hence Theorem \ref{thm:torsion-decay} holds true. \qed
    
	\section{Convergence for the Robin Eigenfunction Problem}
\reset
In this section, we prove the $C^2$ decay estimate for $v^{E,\alpha}$. 
	Recall that 
    $$v^{E,\alpha}=u^{E,\alpha}-u^{E,D}, \quad f^\alpha:=(\lambda^\alpha-\lambda^D)u^{E,D},\quad \mbox{and}\quad q^E=-\partial_\nu u^{E,D}$$ from
	\eqref{eq:intro-eigen-notation}. Then $v^{E,\alpha}$ satisfies
	\begin{equation}\label{eq:eigen-difference}
		\begin{cases}
		-\Delta v^{E,\alpha}
		=\lambda^\alpha v^{E,\alpha}+f^\alpha
		& \text{in }\Omega,\\
		\partial_\nu v^{E,\alpha}+\alpha v^{E,\alpha}=q^E
		& \text{on }\partial\Omega.
		\end{cases}
	\end{equation}
	The Hopf lemma gives $q^E>0$ on $\partial\Omega$.

	For all sufficiently large $\alpha$, it is known that (see \cite{Filinovskiy2017, CrastaFragala2021}).
	\begin{equation}\label{eq:eigenvalue-decay}
		0\leq\lambda^D-\lambda^\alpha\leq C\alpha^{-1}
	\end{equation}
	Here $C$ depends only on  $C^2$ bound for $\partial\Omega$. Moreover we have the following $L^2$ decay estimate 
	\cite{CrastaFragala2021, Filinovskiy2017}. Here we give an alternative proof.

\begin{lemma}[$L^2$ decay estimate]\label{lem:eigen-l2}
Let $g_D = \lambda_2^D - \lambda^D$ be the spectral gap under the Dirichlet boundary condition. There exist $C_{E,0}>0$, depending only $C^{1,1}$ bound of $\partial\Omega$, $\lambda^D$ and $g_D^{-1}$, such that for $\alpha$ large,
\[
 \|v^{E,\alpha}\|_{L^2(\Omega)}
 \leq C_{E,0}\alpha^{-1}.
\]
\end{lemma}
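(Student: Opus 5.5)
The plan is a spectral decomposition of $u^{E,\alpha}$ in the Dirichlet eigenbasis, combined with an $H^1$-type bound on a corrected function that vanishes on $\partial\Omega$. Since $u^{E,\alpha}$ does not vanish on $\partial\Omega$, we first subtract a boundary correction. Let $\psi$ be the harmonic extension of $u^{E,\alpha}|_{\partial\Omega}$; alternatively, take $\psi := u^{E,\alpha}|_{\partial\Omega}\,\eta(h)$ extended along normals with a cutoff $\eta$ supported in $\Omega_{\mu_*}$, which only needs $C^{1,1}$ control. We then set $z := u^{E,\alpha}-\psi \in H^1_0(\Omega)$.

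\textbf{Step 1: smallness of the boundary trace.} Testing \eqref{RobinEigen} with $u^{E,\alpha}$ gives
\[
\int_\Omega |Du^{E,\alpha}|^2 + \alpha\int_{\partial\Omega}(u^{E,\alpha})^2 = \lambda^\alpha \le \lambda^D .
\]
This yields only $\|u^{E,\alpha}\|_{L^2(\partial\Omega)}\le C\alpha^{-1/2}$. To reach $\alpha^{-1}$, we use the boundary condition $u^{E,\alpha} = -\alpha^{-1}\partial_\nu u^{E,\alpha}$. It then suffices to bound $\partial_\nu u^{E,\alpha}$ in $L^2(\partial\Omega)$ uniformly in $\alpha$. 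We plan to do this with a Rellich/Pohozaev identity: test the equation with $X\cdot Du^{E,\alpha}$, where $X$ is a $C^{1}$ vector field with $X=\nu$ on $\partial\Omega$, for instance $X=-Dh$ cut off. The boundary integral then contains $\tfrac12\int(\partial_\nu u)^2$ and tangential terms. The term $\int \partial_\nu u\,(X\cdot Du)$ gives $(\partial_\nu u)^2$, so
\[
\tfrac12\int_{\partial\Omega}(\partial_\nu u)^2 - \tfrac12\int_{\partial\Omega}|D_\tau u|^2 + \tfrac{\lambda^\alpha}{2}\int_{\partial\Omega}u^2 = O\Big(\int_\Omega |Du|^2+u^2\Big)=O(1).
\]
The tangential gradient term on the boundary is the troublesome one. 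We control it by testing with the tangential Laplacian, or more simply by observing that $\alpha\int_{\partial\Omega}u^2\le C$ already bounds $\int_{\partial\Omega}u\,\partial_\nu u$, and by using a $H^{3/2}$ trace/regularity estimate for the Robin problem, uniform in $\alpha$ after rewriting it as an oblique problem. I expect this step to be the main obstacle. If it proves awkward, the fallback is to use the ABP/maximum-principle comparison with the barrier $u^{E,D}+C\alpha^{-1}$ to get $0< u^{E,\alpha}\le C\alpha^{-1}$ on $\partial\Omega$ directly.

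\textbf{Step 2: the spectral gap.} With $\|u^{E,\alpha}\|_{H^{1/2}(\partial\Omega)}\le C\alpha^{-1}$, which interpolates from the $L^2$ bound on the trace plus an $H^1$ bound on $\partial\Omega$ from the same estimates, we get $\|\psi\|_{H^1(\Omega)}\le C\alpha^{-1}$. The function $z$ satisfies
\[
-\Delta z=\lambda^\alpha z+\lambda^\alpha\psi+\Delta\psi
\]
weakly in $H^{-1}$, with right-hand side $O(\alpha^{-1})$ in $H^{-1}$. Decompose $z = c\,u^{E,D}+z^\perp$ with $z^\perp\perp u^{E,D}$ in $L^2$. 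Pairing with $z^\perp$ and using $\int|Dz^\perp|^2\ge\lambda_2^D\int (z^\perp)^2$ together with $\lambda^\alpha\le\lambda^D$ gives
\[
g_D\|z^\perp\|_{L^2}^2\le C\alpha^{-1}\|z^\perp\|_{H^1}.
\]
Combined with the analogous bound on the energy, this yields $\|z^\perp\|_{H^1}\le Cg_D^{-1}\alpha^{-1}$.

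\textbf{Step 3: the normalization.} It remains to show $|c-1|\le C\alpha^{-1}$. From $\|u^{E,\alpha}\|_{L^2}=1$ and $u^{E,\alpha}=c\,u^{E,D}+z^\perp+\psi$, we get $|c^2-1|\le C\alpha^{-1}$. Positivity of $u^{E,\alpha}$ forces $c>0$, since $\int u^{E,\alpha}u^{E,D}>0$. Hence $|c-1|\le C\alpha^{-1}$, and therefore
\[
\|v^{E,\alpha}\|_{L^2}\le |c-1|+\|z^\perp\|_{L^2}+\|\psi\|_{L^2}\le C_{E,0}\alpha^{-1},
\]
with $C_{E,0}$ depending on $\lambda^D$, $g_D^{-1}$ and the $C^{1,1}$ bound of $\partial\Omega$ (the latter through $h$ and the trace estimates).
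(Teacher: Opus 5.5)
Your Steps 2 and 3 are sound once Step 1 is in hand. Step 1, however, is never established: the bound $\|u^{E,\alpha}\|_{H^{1/2}(\partial\Omega)}\le C\alpha^{-1}$ does not follow from any of the routes you sketch.

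The Rellich identity only says $\int_{\partial\Omega}(\partial_\nu u^{E,\alpha})^2=\int_{\partial\Omega}|D_\tau u^{E,\alpha}|^2+O(1)$. It trades the normal derivative for the tangential one and bounds neither. The fact that $\alpha\int_{\partial\Omega}(u^{E,\alpha})^2\le C$ controls $\int_{\partial\Omega}u^{E,\alpha}\partial_\nu u^{E,\alpha}$ says nothing about $D_\tau u^{E,\alpha}$.

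Even granting $\|\partial_\nu u^{E,\alpha}\|_{L^2(\partial\Omega)}\le C$, you would have the trace at order $\alpha^{-1}$ only in $L^2(\partial\Omega)$. Interpolating up to $H^{1/2}(\partial\Omega)$ at the same order requires $\|u^{E,\alpha}\|_{H^1(\partial\Omega)}\le C\alpha^{-1}$, i.e.\ $\|D_\tau\partial_\nu u^{E,\alpha}\|_{L^2(\partial\Omega)}\le C$ uniformly in $\alpha$. That is a bound beyond $H^2$ regularity which your argument does not produce, and which would need more than the $C^{1,1}$ control in the statement. Interpolating against a mere $O(1)$ bound on $D_\tau u^{E,\alpha}$ gives only $\alpha^{-1/2}$.

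The fallback does not close the gap either, for three reasons:
\begin{enumerate}
\item $-\Delta-\lambda^\alpha$ with the Robin condition has principal eigenvalue $0$, with $u^{E,\alpha}$ itself as eigenfunction. So there is no comparison principle letting you compare with $u^{E,D}+C\alpha^{-1}$.
\item Such an argument would also need a uniform $L^\infty$ bound on $u^{E,\alpha}$; in the paper the $L^\infty$ decay is derived from this very lemma.
\item A sup bound on the trace is still not the $H^1(\Omega)$-smallness of $\psi$ that your Step 2 uses.
\end{enumerate}

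The missing idea is duality, which avoids any positive-order norm of the trace. The paper sets $a_\alpha=\int_\Omega u^{E,\alpha}u^{E,D}$ and $z^\alpha=u^{E,\alpha}-a_\alpha u^{E,D}$, and notes $\|v^{E,\alpha}\|_{L^2}^2=\frac{2}{1+a_\alpha}\|z^\alpha\|_{L^2}^2$. It then solves the Dirichlet problem $(-\Delta-\lambda^\alpha)\psi^\alpha=z^\alpha$ with $\psi^\alpha\in H^2\cap H^1_0$ and $\psi^\alpha\perp u^{E,D}$. The gap $\lambda_2^D-\lambda^\alpha\ge g_D$ and the $H^2$ estimate on a $C^{1,1}$ domain give $\|\psi^\alpha\|_{H^2}\le C\|z^\alpha\|_{L^2}$. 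Green's formula together with $u^{E,\alpha}=-\alpha^{-1}\partial_\nu u^{E,\alpha}$ on $\partial\Omega$ yields $\|z^\alpha\|_{L^2}^2=\alpha^{-1}\int_{\partial\Omega}\partial_\nu u^{E,\alpha}\,\partial_\nu\psi^\alpha$. So all that is needed about $u^{E,\alpha}$ is $\|\partial_\nu u^{E,\alpha}\|_{H^{-1/2}(\partial\Omega)}\le C$, which follows from the energy identity and the equation; all the regularity is carried by $\psi^\alpha$.

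Your scheme can be repaired in the same spirit. Take $\psi$ to be the harmonic extension of the trace, and pair $-\Delta z=\lambda^\alpha z+\lambda^\alpha\psi$ with $z^\perp$ in $L^2$ instead of $H^{-1}$. This gives $g_D\|z^\perp\|_{L^2}\le\lambda^D\|\psi\|_{L^2(\Omega)}$. Then $\|\psi\|_{L^2(\Omega)}\le C\|u^{E,\alpha}\|_{H^{-1/2}(\partial\Omega)}=C\alpha^{-1}\|\partial_\nu u^{E,\alpha}\|_{H^{-1/2}(\partial\Omega)}$ by transposition against $H^2\cap H^1_0$, which is the same duality.

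The alternative is a genuine uniform-in-$\alpha$ $H^2$ bound for $u^{E,\alpha}$, e.g.\ via tangential difference quotients, where the Robin term enters with a favorable sign. That would give $\|u^{E,\alpha}\|_{H^{1/2}(\partial\Omega)}=\alpha^{-1}\|\partial_\nu u^{E,\alpha}\|_{H^{1/2}(\partial\Omega)}\le C\alpha^{-1}$ with no interpolation, but it is a substantial argument your proposal does not contain.
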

\begin{proof}
Set
\[
 a_\alpha:=\int_{\Omega} u^{E,\alpha}u^{E,D}\,dx,
 \qquad
 z^\alpha:=u^{E,\alpha}-a_\alpha u^{E,D}.
\]
Since $u^{E,D}$, $u^{E,\alpha}$ are positive,  $\|u^{E,D}\|_{L^2(\Omega)}=\|u^{E,\alpha}\|_{L^2(\Omega)}=1$, we have
\begin{equation}\label{eq:orthogonal-identities}
0<a_\alpha\leq1,
 \quad
\int_\Omega u^{E,D}z^\alpha dx =0, \quad \|z^\alpha\|_{L^2(\Omega)}^2=1-a_\alpha^2=
 \int_\Omega u^{E,\alpha}z^\alpha\,dx.
\end{equation}
Then 
\begin{align*}
\|v^{E,\alpha}\|_{L^2(\Omega)}^2 =2(1-a_\alpha)
 =\frac{2}{1+a_\alpha}\|z^\alpha\|_{L^2(\Omega)}^2.
\end{align*}
To prove the lemma, we only need to prove $\|z^{\alpha}\|_{L^2{(\Omega)}}\le C\alpha^{-1}$.

For $\alpha \ge 1$, since
\[
 \|u^{E,\alpha}\|_{H^1(\Omega)}^2 \le \|Du^{E,\alpha}\|_{L^2(\Omega)}^2
 +\alpha\|u^{E,\alpha}\|_{L^2(\partial\Omega)}^2
 =\lambda^\alpha\leq\lambda^D,
\]
we have
\begin{equation}\label{eq:normal-bound}
 \|\partial_\nu u^{E,\alpha}\|_{H^{-\frac12}(\partial\Omega)}
 \leq C, \quad \forall \; \alpha \ge 1.
\end{equation}
where $C$ depends only on the $C^{1,1}$ regularity of $\Omega$.

On the other hand, as $z^\alpha\perp u^{E,D}$, there exists a unique
$
 \psi^\alpha\in H^2(\Omega)\cap H_0^1(\Omega)
 \cap\{u^{E,D}\}^{\perp}
$
satisfying
\begin{equation}\label{eq:dual}
 (-\Delta  -\lambda^\alpha)\psi^\alpha=z^\alpha.
\end{equation}
Since
$\lambda_2^D-\lambda^\alpha
 \geq\lambda_2^D-\lambda^D=g_D,$
the spectral theorem gives $\|\psi^\alpha\|_{L^2(\Omega)}
 \leq g_D^{-1}\|z^\alpha\|_{L^2(\Omega)}$.
By the $H^2$ estimate for the Poisson problem on a $C^{1,1}$ domain,
\begin{equation}\label{eq:dual-h2}
 \|\psi^\alpha\|_{H^2(\Omega)}
 \leq C\bigl(1+\lambda^D g_D^{-1}\bigr)
 \|z^\alpha\|_{L^2(\Omega)}.
\end{equation}

Using \eqref{eq:orthogonal-identities}, \eqref{eq:dual},  we obtain
\begin{align*}
 \|z^\alpha\|_{L^2(\Omega)}^2 
=\int_\Omega u^{E,\alpha}
       (-\Delta-\lambda^\alpha)\psi^\alpha dx & = -\int_{\partial\Omega} u^{E,\alpha}\,\partial_\nu\psi^\alpha d\sigma =\alpha^{-1}\int_{\partial\Omega} \partial_\nu u^{E,\alpha}\,\partial_\nu\psi^\alpha,
\end{align*}
Combining the above equality with \eqref{eq:normal-bound}, \eqref{eq:dual-h2} and $\|\partial_\nu\psi^\alpha\|_{H^{\frac12}(\partial\Omega)} \leq C\|\psi^\alpha\|_{H^2(\Omega)}$, we get $\|z^\alpha\|_{L^2(\Omega)}\leq C\alpha^{-1}$ for $\alpha \ge 1$. \end{proof}

	\subsection[The L-infinity estimate]
	{\texorpdfstring{The $L^\infty$ estimate}{The L-infinity estimate}}
	Based on the $L^2$ estimate, we now prove the $L^\infty$ decay estimate for $v^{E, \alpha}$. We first record the following ABP estimate for the Robin problem, which is a special case of \cite[Chapter~1, Section~1.5, Theorem~1.9]{Lieberman2013}. For completeness, we include a proof.
    \begin{lemma}
		\label{lem:ABP-Robin}
		Let $\Omega\subset\mathbb{R}^{n}$ be a bounded $C^{1}$ domain, and 
		$\alpha>0$.
		Suppose that
		$
		u\in C^{2}(\Omega)\cap C^{1}(\overline{\Omega})
		$
		satisfies
		\begin{equation}
			\label{eq:Robin-problem}
			\begin{cases}
				-\Delta u=f & \text{in }\Omega,\\
				\partial_{\nu}u+\alpha u=g
				& \text{on }\partial\Omega,
			\end{cases}
		\end{equation}
		where $f\in L^{n}(\Omega)$ and
		$g\in L^{\infty}(\partial\Omega)$. Then
		\begin{equation}
			\label{eq:ABP-Robin-Linfty}
			\|u\|_{L^{\infty}(\Omega)}
			\leq
			\frac{1}{\alpha}
			\|g\|_{L^{\infty}(\partial\Omega)}
			+C_{n}
			\Big(
			\operatorname{diam}(\Omega) +\frac{1}{\alpha}
			\Big)
			\|f\|_{L^{n}(\Omega)}.
		\end{equation}
	\end{lemma}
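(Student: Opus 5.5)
The plan is to reduce the Robin problem to the classical ABP estimate for the Dirichlet-type situation by comparing $u$ with its boundary extremes. By symmetry (replacing $u,f,g$ with $-u,-f,-g$) it suffices to bound $\sup_\Omega u$ from above. Set $d:=\operatorname{diam}(\Omega)$ and $K:=\|f\|_{L^n(\Omega)}$, and assume $\sup_\Omega u>0$, since otherwise there is nothing to prove.

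The interior estimate comes from the classical ABP maximum principle. For $u\in C^2(\Omega)\cap C^0(\overline\Omega)$ with $-\Delta u\le f^+$, it gives
\[
\sup_\Omega u\le \sup_{\partial\Omega}u+C_n d\|f\|_{L^n(\Omega)}.
\]
This alone is not enough, because $\sup_{\partial\Omega}u$ is not directly controlled: the boundary condition only relates $u$ to $\partial_\nu u$.

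To handle the boundary, I would use the ABP argument itself rather than a black box. Let $M=\sup_\Omega u$, attained at some $x_1\in\overline\Omega$. If $x_1\in\partial\Omega$ and $\partial_\nu u(x_1)\ge0$, then $\alpha u(x_1)\le g(x_1)$, so $M\le\alpha^{-1}\|g\|_\infty$. The problem is that the Hopf-type sign $\partial_\nu u(x_1)\ge 0$ holds at a boundary maximum because $u\in C^1(\overline\Omega)$, but the interior ABP error must also be accounted for. The cleaner route is the Lieberman version of ABP via the upper contact set. Consider
\[
\Gamma^+=\{x\in\Omega:\ u(y)\le u(x)+p\cdot(y-x)\ \forall y\in\overline\Omega,\ \text{for some }p\},
\]
together with the gradient image $Du(\Gamma^+)$. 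I would show that every $p$ with
\[
|p|<\frac{M-\alpha^{-1}\|g\|_\infty}{d+\alpha^{-1}}
\]
is attained as $Du(x)$ for some $x\in\Gamma^+$ lying in the interior. To see this, take the affine function $\ell(y)=u(x_1)+p\cdot(y-x_1)$, lower it, and push it up until it touches the graph of $u$ at some $x$. If the touching point were on $\partial\Omega$, then $u-\ell$ would have a maximum there, giving $\partial_\nu u(x)\ge p\cdot\nu\ge-|p|$. Hence $\alpha u(x)\le g(x)+|p|$, so $u(x)\le\alpha^{-1}(\|g\|_\infty+|p|)$. On the other hand, the touching gives $u(x)\ge M-|p|d$. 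Combining these yields $M\le\alpha^{-1}\|g\|_\infty+|p|(d+\alpha^{-1})$, which contradicts the choice of $|p|$. So the contact point is interior, where $Du(x)=p$ and $D^2u\le0$.

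The standard area-formula argument then finishes the proof. Writing $r:=(M-\alpha^{-1}\|g\|_\infty)/(d+\alpha^{-1})$,
\[
\omega_n r^n\le\int_{\Gamma^+}|\det D^2u|\le\int_{\Gamma^+}\Big(\frac{-\Delta u}{n}\Big)^n\le n^{-n}\|f\|^n_{L^n(\Omega)}.
\]
Rearranging gives $M\le\alpha^{-1}\|g\|_\infty+C_n(d+\alpha^{-1})\|f\|_{L^n}$. Since $u$ is only $C^2$ in the interior with $f\in L^n$, I would justify the area formula via the usual approximation: use strict interior compact subsets, or note $f=-\Delta u$ is continuous in $\Omega$.

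The main obstacle I expect is the boundary-touching step. One has to verify carefully that a maximum of $u-\ell$ on $\partial\Omega$ yields $\partial_\nu(u-\ell)\ge0$, which uses only $C^1$ regularity of both $u$ and the domain, together with an outward-pointing normal. One also has to get the exact constants $\alpha^{-1}$ and $d+\alpha^{-1}$ in the displayed inequality.
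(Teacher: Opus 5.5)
Your proposal is correct and follows essentially the paper's argument. The paper first subtracts $\|g\|_{L^\infty(\partial\Omega)}/\alpha$ and works with $v=u-\|g\|_{L^\infty(\partial\Omega)}/\alpha$, so its radius $M_0/(\operatorname{diam}(\Omega)+\alpha^{-1})$ is exactly your $r$; it then excludes boundary contact points via the Robin condition and finishes with the area formula and AM--GM, just as you do. The only small fixes are that the correct trivial case is $M\le\alpha^{-1}\|g\|_{L^\infty(\partial\Omega)}$ rather than $\sup_\Omega u\le 0$, which is what guarantees $r>0$, and that your opening detour through the classical Dirichlet ABP estimate is unnecessary.
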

    \begin{remark}
    One can see \cite{Lieberman2013} for more general ABP estimate for solutions to uniform elliptic PDEs with oblique derivative problem. 
    \end{remark}
    \begin{proof}
Set
$
G=\|g\|_{L^\infty(\partial\Omega)}
$ and
$v=u-\frac{G}{\alpha}$.
Then
\[
-\Delta v=f
\;\;\text{in }\Omega\quad
\text{and}\quad
\partial_\nu v+\alpha v
=g-G\leq0
\;\;\text{on }\partial\Omega.
\]

Let
$
M_0 =\max_{\overline{\Omega}}v.$
If $M_0\leq0$, we have $\sup_{\Omega} u\le \frac{G}{\alpha}$. Assume that $M_0 >0$, and choose
$x_0\in\overline{\Omega}$ such that
$
v(x_0)=M_0.$
For any
$
0<r<
\frac{M_0}{\operatorname{diam}(\Omega)+\alpha^{-1}}
$
and any $p\in B_r(0)$, let $y_0\in\overline{\Omega}$ be a maximum point of
$\widetilde v(x):= v(x)- \langle p, x-x_0\rangle
$. In particular,
$
\widetilde v(y_0)\geq v(x_0)=M_0
$
and then
\begin{align*}
v(y_0) \geq M_0-|p|\,|y_0-x_0| 
> M_0 -\operatorname{diam}(\Omega)r > \frac{r}{\alpha}
\geq \frac{|p|}{\alpha}.
\end{align*}

We claim that $y_0\in\Omega$. Indeed, if $y_0\in\partial\Omega$, 
\[
\widetilde v_{\nu}(y_0)=\partial_\nu v(y_0)-p\cdot\nu(y_0)\geq0.
\]
On the other hand, the Robin boundary condition gives $\partial_\nu v(y_0)\leq-\alpha v(y_0)$. Hence
\[
0
\leq
\partial_\nu v(y_0)-p\cdot\nu(y_0)
\leq
-\alpha v(y_0)+|p|
<0,
\]
which is a contradiction. Thus $y_0\in\Omega$, then we have
\[
D v(y_0)=p
\quad\text{and}\quad
D^2v(y_0)\leq0.
\]
Let
\[
\Gamma^{+}
=
\left\{
y\in\Omega:
v(x)\leq
v(y)+ \langle Dv(y), x-y\rangle
\ \text{for all }x\in\overline{\Omega}
\right\}
\]
be the upper contact set of $v$. The preceding argument shows that
\[
B_r(0)\subset Dv(\Gamma^{+}).
\]
Hence, by the area formula,
\[
|B_1|r^n
\leq
\int_{\Gamma^{+}}
\det(-D^2v)\,dx.
\]
Since $D^2v\leq0$ on $\Gamma^{+}$, by the arithmetic--geometric mean inequality,
\[
\det(-D^2v)
\leq
\Big(\frac{-\Delta v}{n}\Big)^n
=
\Big(\frac{f}{n}\Big)^n
\qquad\text{on }\Gamma^{+}.
\]
It follows that
\[
|B_1|r^n
\leq
\frac{1}{n^n}
\int_{\Gamma^{+}}|f|^n\,dx
\leq
\frac{1}{n^n}
\|f\|_{L^n(\Omega)}^n.
\]
Let 
$
r$ tend to $
\frac{M_0}{\operatorname{diam}(\Omega)+\alpha^{-1}},
$
we obtain
\[
M_0
\leq
C_n
\left(
\operatorname{diam}(\Omega)+\frac{1}{\alpha}
\right)
\|f\|_{L^n(\Omega)},
\]
where $
C_n=\frac{1}{n|B_1|^{1/n}}.$
Therefore,
\[
\sup_\Omega u
\leq
\frac{1}{\alpha}
\|g\|_{L^\infty(\partial\Omega)}
+
C_n
\left(
\operatorname{diam}(\Omega)+\frac{1}{\alpha}
\right)
\|f\|_{L^n(\Omega)}.
\]

Applying the same argument to $-u$ gives
\[
-\inf_\Omega u
\leq
\frac{1}{\alpha}
\|g\|_{L^\infty(\partial\Omega)}
+
C_n
\left(
\operatorname{diam}(\Omega)+\frac{1}{\alpha}
\right)
\|f\|_{L^n(\Omega)}.
\]
Combining the last two inequalities yields
\eqref{eq:ABP-Robin-Linfty}.
\end{proof}

Based on the above lemmas, we immediately 
obtain the $C^0$-decay estimate for $v^{E}$.
	\begin{lemma}\label{lem:eigen-c0}
		There is  positive constant $C_{E,0}$, depending only on the
		$C^3$ bound of $\partial\Omega$, $\lambda^D$, and $g_D^{-1}$,
		such that, for all $\alpha$ big enough,
		\[
		\|v^{E,\alpha}\|_{L^\infty(\Omega)}
		\leq C_{E,0}\alpha^{-1}.
		\]
	\end{lemma}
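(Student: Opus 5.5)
We apply the ABP estimate of Lemma~\ref{lem:ABP-Robin} to the problem \eqref{eq:eigen-difference}, taking $u=v^{E,\alpha}$, $g=q^E$ and right-hand side $f=\lambda^\alpha v^{E,\alpha}+f^\alpha$. The boundary contribution is harmless: $\alpha^{-1}\|q^E\|_{L^\infty(\partial\Omega)}\le C\alpha^{-1}$, because standard boundary regularity for the Dirichlet eigenfunction bounds $q^E$. The whole difficulty is therefore to show that $\|\lambda^\alpha v^{E,\alpha}+f^\alpha\|_{L^n(\Omega)}\le C\alpha^{-1}$.

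The term $f^\alpha$ is easy. By \eqref{eq:eigenvalue-decay}, $|f^\alpha|\le C\alpha^{-1}\|u^{E,D}\|_{L^\infty(\Omega)}$, and $\|u^{E,D}\|_{L^\infty}$ is controlled by $\lambda^D$ and the domain. The term $\lambda^\alpha v^{E,\alpha}$ is the real issue. Lemma~\ref{lem:eigen-l2} only controls its $L^2$ norm, which suffices only when $n\le2$.

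To pass from $L^2$ to $L^n$ (or $L^\infty$) without losing the rate, I would use interpolation and absorb the leftover term. Write $N:=\|v^{E,\alpha}\|_{L^\infty(\Omega)}$. Then
\[
\|v^{E,\alpha}\|_{L^n(\Omega)}\le N^{1-2/n}\|v^{E,\alpha}\|_{L^2(\Omega)}^{2/n}\le N^{1-2/n}(C\alpha^{-1})^{2/n}
\]
for $n\ge2$. Inserting this into \eqref{eq:ABP-Robin-Linfty} gives
\[
N\le C\alpha^{-1}+C\,N^{1-2/n}\alpha^{-2/n}.
\]
Young's inequality gives $CN^{1-2/n}\alpha^{-2/n}\le \tfrac12N+C'\alpha^{-1}$. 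Absorbing $\tfrac12 N$ into the left side yields $N\le C\alpha^{-1}$, provided $N<\infty$. This holds for each fixed $\alpha$, since $u^{E,\alpha}$ is smooth up to the boundary on a $C^{3,1}$ domain. It also requires $\lambda^\alpha\le\lambda^D$, which bounds the coefficient.

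The main obstacle is exactly this step: making sure the absorption is legitimate and that the constants do not depend on $\alpha$. The ABP constant $C_n(\operatorname{diam}\Omega+\alpha^{-1})$ is uniform for $\alpha\ge1$, and the $L^2$ constant depends only on $\lambda^D$, $g_D^{-1}$ and the domain, so everything is uniform.

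An alternative that avoids interpolation is to bootstrap. One would use $W^{2,p}$ estimates for the Robin problem, which are uniform in $\alpha$ when applied to $\alpha v$ via a reformulation. This is messier, so I would use the interpolation–absorption route. The resulting constant $C_{E,0}$ then depends on the $C^3$ bound of $\partial\Omega$ (through $q^E$), on $\lambda^D$, and on $g_D^{-1}$, as stated.
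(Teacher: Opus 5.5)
Your proposal is correct and is essentially the paper's proof. You apply the Robin ABP estimate (Lemma~\ref{lem:ABP-Robin}) to \eqref{eq:eigen-difference}, interpolate $\|v^{E,\alpha}\|_{L^n}\le\|v^{E,\alpha}\|_{L^\infty}^{1-2/n}\|v^{E,\alpha}\|_{L^2}^{2/n}$ using Lemma~\ref{lem:eigen-l2}, and absorb via Young's inequality. You are slightly more careful than the paper in bounding $f^\alpha$ through \eqref{eq:eigenvalue-decay} and in noting that the absorption needs $\|v^{E,\alpha}\|_{L^\infty}<\infty$ for each fixed $\alpha$.
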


		\begin{proof}
		Applying the ABP estimate \eqref{eq:ABP-Robin-Linfty} to
		\eqref{eq:eigen-difference}, we see that, for $\alpha \ge 1$,
		\begin{align}
        \label{estimate-c0-vEa}
		\|v^{E,\alpha}\|_{L^\infty}
		\leq \alpha^{-1}\|q^E\|_{L^\infty(\partial\Omega)}
		+C\|\lambda^\alpha v^{E,\alpha}+f^\alpha\|_{L^n(\Omega)} \leq  C\alpha^{-1}+\|v^{E,\alpha}\|_{L^n(\Omega)}.
		\end{align}
		By Lemma~\ref{lem:eigen-l2},
		\begin{align*}
		\|v^{E,\alpha}\|_{L^n(\Omega)}
		\leq
		\|v^{E,\alpha}\|_{L^\infty(\Omega)}^{\frac{n-2}{n}}
		\|v^{E,\alpha}\|_{L^2(\Omega)}^\frac{2}{n}
		\le C^{-\frac{2}{n}}\alpha^{-\frac{2}{n}}\|v^{E,\alpha}\|_{L^\infty(\Omega)}^\frac{n-2}{n}.
		\end{align*}
	Then we get the desired $L^{\infty}$ decay estimate by \eqref{estimate-c0-vEa} and Young's inequality.
	\end{proof}

	\subsection{The gradient estimate}
	Let $u^1$  be the solution of the following equation
	\begin{equation}\label{eq:eigen-u1}
		\begin{cases}
		-\Delta u^1= \alpha(\lambda^\alpha v^{E,\alpha}+f^\alpha)
		& \text{in }\Omega,\\
		u^1=q^E & \text{on }\partial\Omega,
		\end{cases}
	\end{equation}
	and set
	\[
	w:=u^1-\alpha v^{E,\alpha}.
	\]
	Then $w$ solves
	\begin{equation}\label{eq:eigen-w}
		\begin{cases}
		\Delta w=0 & \text{in }\Omega,\\
		\partial_\nu w+\alpha w=\partial_\nu u^1
		& \text{on }\partial\Omega.
		\end{cases}
	\end{equation}
	Also,
	\[
	w=q^E-\alpha v^{E,\alpha}
	=\partial_\nu v^{E,\alpha}\quad\text{on }\partial\Omega.
	\]
	The estimate \eqref{eq:eigenvalue-decay}, Lemma \ref{lem:eigen-c0} and the $C^{2,\beta}$ bound for $q^E$ imply that
	$\|Du^1\|_{L^\infty(\Omega)}\leq C$.
	The boundary maximum argument applied to \eqref{eq:eigen-w} then gives
	\[
	\|w\|_{L^\infty(\Omega)}
	\leq\alpha^{-1}\|\partial_\nu u^1\|_{L^\infty(\partial\Omega)}
	\leq C\alpha^{-1}.
	\]

	Next, we prove the decay estimates for $Dv^{E,\alpha}$ and $Dw$. Differentiating the equation
	for $v^{E,\alpha}$ gives, for $1\leq i,j\leq n$,
	\begin{equation}\label{eq:eigen-differentiated-interior}
		-\Delta v^{E,\alpha}_i
		=\lambda^\alpha v^{E,\alpha}_i+f^\alpha_i\;\; \mbox{and}\;\; 
		-\Delta v^{E,\alpha}_{ij}
		=\lambda^\alpha v^{E,\alpha}_{ij}+f^\alpha_{ij}
		\quad\text{in }\Omega.
	\end{equation}
	Let $\tau$ be a unit tangent vector to $\partial\Omega$, the tangential differentiation of $w=q^E-\alpha v^{E,\alpha}=\partial_\nu v^{E,\alpha}$ on $\partial\Omega$ gives
	\[
		w_\tau = v^{E,\alpha}_{\nu\tau}
		+ \langle Dv^{E,\alpha}, D_\tau\nu\rangle, 
    \quad {\rm i.e. } \quad
        v^{E,\alpha}_{\nu\tau}
		+ \langle Dv^{E,\alpha}, D_\tau\nu \rangle
		+ \alpha v^{E,\alpha}_\tau
		= \partial_\tau q^E.
	\]
	We will use these formulas in the boundary estimates below.
	\begin{lemma}[Gradient estimates for $v^{E,\alpha}$]\phantomsection
		\label{lem:eigen-c1}
		There are positive constants
			$C_{E,1.1}$ depending on the $C^3$ bound of $\partial\Omega$, $g_D^{-1}$; and $C_{E,1.2}$, depending on the $C^{3,1}$
			bound for $\partial\Omega$, $\lambda^D$, $g_D^{-1}$; such that for large $\alpha$
			\[
			\|Dv^{E,\alpha}\|_{L^\infty(\Omega)}
			\leq C_{E,1.1}\alpha^{-1}
			\quad \|Dw\|_{L^\infty(\Omega)}
			\leq C_{E,1.2}\alpha^{-1}.
			\]
	\end{lemma}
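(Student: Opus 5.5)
We follow the scheme of Lemma~\ref{lem:torsion-c1}, now accounting for the non-harmonic interior equation. First we gather the ingredients, all uniform for large $\alpha$. By \eqref{eq:eigenvalue-decay} and Lemma~\ref{lem:eigen-c0}, the right-hand side of \eqref{eq:eigen-u1}, $F^\alpha:=\alpha(\lambda^\alpha v^{E,\alpha}+f^\alpha)$, satisfies $\|F^\alpha\|_{L^\infty}\le C$. Elliptic $W^{2,p}$ theory then gives $\|u^1\|_{C^{1,\beta}}\le C$, hence $|w|\le C\alpha^{-1}$ as already noted. Also $|f^\alpha|+|Df^\alpha|+|D^2f^\alpha|\le C\alpha^{-1}$ by the regularity of $u^{E,D}$.

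For $Dv^{E,\alpha}$ we use
\[
G(x,\xi)=\langle\xi,Dv^{E,\alpha}\rangle+\langle\xi,Dh\rangle w+\tfrac{w^2}{2}
\]
on $\overline\Omega\times\mathbb S^{n-1}$. At an interior maximum $(x_0,\xi_0)$ we have
\[
\Delta G=-\lambda^\alpha v^{E,\alpha}_{\xi_0}-f^\alpha_{\xi_0}+\langle\xi_0,D\Delta h\rangle w+2\langle\xi_0,D^2hDw\rangle+|Dw|^2 .
\]
The new term is $-\lambda^\alpha v^{E,\alpha}_{\xi_0}=-\lambda^\alpha G+O(\alpha^{-1})$. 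If $G(x_0,\xi_0)>C\alpha^{-1}$, this term is negative and does not help. So we keep it: $0\ge\Delta G\ge |Dw|^2-C|Dw|-C-\lambda^D G$. Since $Dv^{E,\alpha}=\alpha^{-1}(Du^1-Dw)$ and $|Du^1|\le C$, we get $G\le\alpha^{-1}(C+|Dw|)+C\alpha^{-1}$. If $|Dw|$ is large at $x_0$, the inequality $|Dw|^2\le C|Dw|+C+\lambda^D\alpha^{-1}(C+|Dw|)$ forces $|Dw(x_0)|\le C$, so $G(x_0,\xi_0)\le C\alpha^{-1}$.

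The boundary cases carry over verbatim from Lemma~\ref{lem:torsion-c1}, since they use only the boundary relation $\partial_\nu v^{E,\alpha}=w$, its tangential derivative (displayed before the lemma), and $\partial_\nu w=\partial_\nu u^1-\alpha w=O(1)$. These give $(\alpha-C)v^{E,\alpha}_1\le C$ in the tangential case and the convex-combination bound in the non-tangential case. Thus $\sup G\le C\alpha^{-1}$, and since $|w|\le C\alpha^{-1}$, we obtain $\langle\xi,Dv^{E,\alpha}\rangle\le C\alpha^{-1}$ for every $\xi$.

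For $Dw$ we repeat the trick of setting $\widetilde u$ equal to the harmonic extension of $\partial_\nu u^1$ and $\widetilde w=\widetilde u-\alpha w$. This is harmonic with $\partial_\nu\widetilde w+\alpha\widetilde w=\partial_\nu\widetilde u$, and $\widetilde G$ is then built as before. The main obstacle is that we now need $\|D\widetilde u\|_\infty\le C$, i.e. $\|\partial_\nu u^1\|_{C^{1,\beta}(\partial\Omega)}\le C$, which requires a $C^{2,\beta}$ bound on $u^1$. That in turn needs $F^\alpha$ bounded in $C^\beta$, i.e. $\alpha\|v^{E,\alpha}\|_{C^\beta}\le C$. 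This follows from the gradient estimate just proved: $\|Dv^{E,\alpha}\|_\infty\le C\alpha^{-1}$ gives $\|F^\alpha\|_{C^{0,1}}\le C$. Schauder theory on the $C^{3,1}$ domain with $q^E\in C^{2,\beta}$ then yields $\|u^1\|_{C^{2,\beta}}\le C$, so $\|\partial_\nu u^1\|_{C^{1,\beta}(\partial\Omega)}\le C$ and $\|D\widetilde u\|_\infty\le C$; this is why $C_{E,1.2}$ depends on the $C^{3,1}$ bound and $\lambda^D$.

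Since $w$ and $\widetilde w$ are harmonic, the argument for $\widetilde G=\langle\xi,Dw\rangle+\langle\xi,Dh\rangle\widetilde w+\tfrac{\widetilde w^2}{2}$ is then literally the torsion argument. It gives $\|\widetilde w\|_\infty\le C\alpha^{-1}$ and finally $\|Dw\|_\infty\le C\alpha^{-1}$.
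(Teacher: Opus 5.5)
Your proposal is correct and follows essentially the same route as the paper: the same auxiliary functions $G$ and $\widetilde G$, the same interior/boundary case analysis carried over from the torsion lemma, and the same $\widetilde u,\widetilde w$ device for $Dw$. Your explicit remark that the Lipschitz bound on $F^\alpha$ comes from the just-proved estimate on $Dv^{E,\alpha}$ is exactly what the paper uses implicitly. That bound is what feeds the Schauder estimate for $u^1$ and hence gives $\|D\widetilde u\|_\infty\le C$.
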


	\begin{proof}
		As before, we use the extension function $h$ of the boundary distance and set
		\[
		G(x,\xi)=v^{E,\alpha}_\xi
		{\color{red} + } \langle\xi,Dh\rangle w+\frac12w^2.
		\]
		If $G$ attains its maximum at $(x_0,\xi_0)$ with $x_0\in\Omega$, then
		\[
		\begin{aligned}
		\Delta G(x_0, \xi_0)
		={}&-\lambda^\alpha v^{E,\alpha}_{\xi_0}
		-f^\alpha_{\xi_0}
    {+ } \langle\xi_0,D\Delta h\rangle w  {+ } 2\langle \xi_0,D^2h Dw\rangle +|Dw|^2.
		\end{aligned}
		\]
		Using $Dv^{E,\alpha}=\alpha^{-1}(Du^1-Dw)$, Lemma \ref{lem:eigen-c0} and the sequel uniform estimate for $w$, we obtain
		\[
		0\geq\Delta G(x_0,\xi_0)
		\geq |Dw(x_0)|^2-C|Dw(x_0)|-C.
		\]
		Thus $|Dw(x_0)|\leq C$ and $G(x_0,\xi_0)\leq C\alpha^{-1}$.
		
        If now $x_0\in\partial\Omega$, the tangential and nontangential arguments
		in the proof of Lemma~\ref{lem:torsion-c1} hold still without
		change, since
		$\partial_\nu v^{E,\alpha}=w$ and
		$\partial_\nu w+\alpha w=\partial_\nu u^1$ on $\partial\Omega$.
		Applying the same argument to $(-v^{E,\alpha},-u^1,-w)$ gives
		\[
		\|Dv^{E,\alpha}\|_{L^\infty(\Omega)}\leq C\alpha^{-1}.
		\]

		Moreover, for $Dw$, we set
		\[
		F^\alpha:=\alpha(\lambda^\alpha v^{E,\alpha}+f^\alpha)
		\]
		so that
		\[
		\|F^\alpha\|_{L^\infty(\Omega)}
		+\|DF^\alpha\|_{L^\infty(\Omega)}\leq C.
		\]
		Let $\widetilde u$ solve
		\[
		\begin{cases}
		\Delta\widetilde u=0 & \text{in }\Omega,\\
		\widetilde u=\partial_\nu u^1 & \text{on }\partial\Omega,
		\end{cases}
		\]
		and denote $\widetilde w:=\widetilde u-\alpha w$. The uniform bound for
		$F^\alpha$, the preceding $C^{2,\beta}$ bound for $q^E$, and standard
		elliptic regularity theory, applied successively to the Dirichlet problems for
		$u^1$ and $\widetilde u$, give
		\[
		\|D^2u^1\|_{L^\infty(\Omega)}
		+\|D\widetilde u\|_{L^\infty(\overline\Omega)}\leq C.
		\]
		Moreover, $\widetilde w=\partial_\nu w$ on $\partial\Omega$ and
		\[
		\begin{cases}
		\Delta\widetilde w=0 & \text{in }\Omega,\\
		\partial_\nu\widetilde w+\alpha\widetilde w
		=\partial_\nu\widetilde u & \text{on }\partial\Omega,
		\end{cases}
		\]
		The boundary maximum argument gives $\|\widetilde w\|_{L^\infty(\Omega)}\leq C\alpha^{-1}$. Consider the auxiliary function
		\[
		\widetilde G(x,\xi)
		=w_\xi +\langle\xi,Dh\rangle\widetilde w
		+\frac{\widetilde w^2}{2}.
		\]
		At an interior maximum, recalling that $w$ and $\widetilde w$ are harmonic,
		\[
		0 \ge \Delta\widetilde G
		=  \langle\xi,D\Delta h\rangle\widetilde w
		{+ } 2\langle\xi,D^2h D\widetilde w\rangle
		+|D\widetilde w|^2,
		\]
		so $|D\widetilde w|\leq C$ there. Since
		$Dw=\alpha^{-1}(D\widetilde u-D\widetilde w)$, the interior maximum is
		bounded by $C\alpha^{-1}$. 
        
        The boundary argument is exactly the one in the proof of
		Lemma~\ref{lem:torsion-c1}. Applying it also to
		$(-w,-\widetilde w)$ yields $\|Dw\|_{L^\infty(\Omega)}\leq C\alpha^{-1}$.
	\end{proof}

	The proof of
	Lemma~\ref{lem:eigen-c1} also gives
	\begin{equation}\label{eq:eigen-u1-c2}
		\|D^2u^1\|_{L^\infty(\Omega)}\leq C.
	\end{equation}

	\subsection{The second order derivative estimate}
	Again, we will reduce the global second-order derivative estimate to the double
	normal derivative on the boundary, as before, convexity is not necessary in this step.
	\begin{lemma}\label{lem:eigen-hessian-reduction}
		There is $C>0$, depending
		only on the $C^{3,1}$ bound for $\partial\Omega$,
		$\lambda^D$ and $g_D^{-1}$, such that, for every large
		$\alpha$,
		\[
		\|D^2v^{E,\alpha}\|_{L^\infty(\Omega)}
		\leq C\Big(\alpha^{-1}
		+\max_{\partial\Omega}|v^{E,\alpha}_{\nu\nu}|\Big).
		\]
	\end{lemma}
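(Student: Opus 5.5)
We follow the proof of Lemma~\ref{lem:torsion-hessian-reduction} almost verbatim. We keep track of the extra terms coming from the right-hand side $\lambda^\alpha v^{E,\alpha}+f^\alpha$, which is no longer zero. With $w=u^1-\alpha v^{E,\alpha}$ as in \eqref{eq:eigen-w}, we consider on $\overline\Omega\times\mathbb S^{n-1}$
\[
W(x,\xi)=v^{E,\alpha}_{\xi\xi}+2\langle\xi,Dh\rangle\langle\widetilde\xi,Dw+D^2h\,Dv^{E,\alpha}\rangle+\frac{|Dw|^2}{2},\qquad \widetilde\xi=\xi-\langle\xi,Dh\rangle Dh .
\]
Let $(x_0,\xi_0)$ be a maximum point, and assume $v^{E,\alpha}_{\xi_0\xi_0}(x_0)\ge\alpha^{-1}$. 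Comparing $W(x_0,e_i)$ with $W(x_0,\xi_0)$ and using $\Delta v^{E,\alpha}=-\lambda^\alpha v^{E,\alpha}-f^\alpha=O(\alpha^{-1})$ bounds the trace. This bound comes from Lemma~\ref{lem:eigen-c0} together with \eqref{eq:eigenvalue-decay}, which gives $|f^\alpha|\le C\alpha^{-1}$. As in \eqref{estimate-c2-vTa} we then get $|D^2v^{E,\alpha}(x_0)|\le C_n v^{E,\alpha}_{\xi_0\xi_0}(x_0)+C\alpha^{-1}$. The gradient bounds of Lemma~\ref{lem:eigen-c1} make all lower-order terms $O(\alpha^{-1})$.

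\emph{Interior case.} Here $w$ is still harmonic. Also $v^{E,\alpha}$ and its derivatives satisfy \eqref{eq:eigen-differentiated-interior}, with $|f^\alpha_{ij}|\le C\alpha^{-1}$ by the regularity of $u^{E,D}$. Hence
\[
\Delta W\ge |D^2w|^2-C\big(|D^2w|+|D^2v^{E,\alpha}|+|D^3 v^{E,\alpha}|\,\alpha^{-1}\cdot 0+1\big).
\]
Here $\Delta v^{E,\alpha}_{\xi\xi}=-\lambda^\alpha v^{E,\alpha}_{\xi\xi}-f^\alpha_{\xi\xi}$, which is bounded by $C(|D^2v^{E,\alpha}|+\alpha^{-1})$. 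The third-order terms from $\Delta(D^2h\,Dv^{E,\alpha})$ appear through $\langle D^2h, D^2 v^{E,\alpha}_k\rangle$-type cross terms. Those coming from $Dw$ are absorbed by $|D^2w|^2$ exactly as in the torsion case. The others, of the form $2\langle D^3h,D^2v^{E,\alpha}\rangle$ and $D^2h\,D\Delta v^{E,\alpha}$, are bounded by $C(|D^2v^{E,\alpha}|+\alpha^{-1})$, because $D\Delta v^{E,\alpha}=O(\alpha^{-1})$ by Lemma~\ref{lem:eigen-c1}. Using $D^2w=D^2u^1-\alpha D^2v^{E,\alpha}$ and \eqref{eq:eigen-u1-c2}, we get $|D^2w|^2\ge \frac{\alpha^2}{2}|D^2v^{E,\alpha}|^2-C$. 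Therefore $0\ge \frac{\alpha^2}{4}|D^2v^{E,\alpha}|^2-C$ at $x_0$, which gives $W\le C\alpha^{-1}$. The regularization of $h$ by mollification in a ball, needed to handle $D^4h$ with only a $C^{3,1}$ bound, is repeated verbatim.

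\emph{Boundary, non-tangential $\xi_0$.} Tangential differentiation of $\partial_\nu v^{E,\alpha}=w$ gives $v^{E,\alpha}_{\nu\tau}=w_\tau-\langle Dv^{E,\alpha},D_\tau\nu\rangle=O(\alpha^{-1})$ by Lemma~\ref{lem:eigen-c1}. The same convex-combination identity as before then yields $W(x_0,\xi_0)\le W(x_0,\nu)\le C(\alpha^{-1}+\max_{\partial\Omega}|v^{E,\alpha}_{\nu\nu}|)$.

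\emph{Boundary, tangential $\xi_0=e_1$.} Differentiating $\partial_\nu v^{E,\alpha}=q^E-\alpha v^{E,\alpha}$ twice along the boundary gives $v^{E,\alpha}_{n11}=\alpha v^{E,\alpha}_{11}+O(1+|D^2v^{E,\alpha}|)$, using $\|q^E\|_{C^2}\le C$. The condition $W_n(x_0,e_1)\le0$ reads $v^{E,\alpha}_{11n}+w_kw_{kn}+O(\alpha^{-1})\le0$. Here $w_kw_{kn}=O(\alpha^{-1}(1+\alpha|D^2v^{E,\alpha}|))$, which is controlled by the bound on $|D^2v^{E,\alpha}(x_0)|$. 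This gives $(\alpha-C)v^{E,\alpha}_{11}\le C$, and hence the claim.

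The main obstacle is the interior computation. We must check that the inhomogeneous terms $f^\alpha$ and $\lambda^\alpha v^{E,\alpha}$ and their derivatives up to order two are $O(\alpha^{-1})$ or are bounded linearly by $|D^2v^{E,\alpha}|$, so that they do not spoil the absorption by $\frac{\alpha^2}{4}|D^2v^{E,\alpha}|^2$. The required bounds follow from \eqref{eq:eigenvalue-decay}, the $C^{2,\beta}$ regularity of $u^{E,D}$, and Lemmas~\ref{lem:eigen-c0} and~\ref{lem:eigen-c1}.
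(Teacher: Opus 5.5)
Your proposal is correct and follows the paper's proof essentially step for step. It uses the same auxiliary function $W$ and the same trace comparison, absorbs the interior terms via $D^2w=D^2u^1-\alpha D^2v^{E,\alpha}$ and \eqref{eq:eigen-u1-c2}, and treats the non-tangential boundary case (exact cancellation of the $2ab$ terms) and the tangential case ($W_n(x_0,e_1)\le 0$) in the same way. Your bookkeeping of the inhomogeneous terms, where third derivatives of $v^{E,\alpha}$ enter only through $D^2h\,D\Delta v^{E,\alpha}=O(\alpha^{-1})$, makes explicit what the paper leaves implicit. Two small fixes: delete the stray term $|D^3v^{E,\alpha}|\,\alpha^{-1}\cdot 0$ from your interior display, and state at the end that the upper bound on $\max W$ together with $|\Delta v^{E,\alpha}|\le C\alpha^{-1}$ gives the two-sided Hessian bound.
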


	\begin{proof}
		Let 
		\[
		W(x,\xi)= v^{E,\alpha}_{\xi\xi}
		{+ } 2\langle\xi,Dh\rangle
		\langle\widetilde\xi,Dw { + }D^2h Dv^{E,\alpha}\rangle
		+\frac{|Dw|^2}{2},
		\]
		where $\widetilde\xi=\xi-\langle\xi,Dh\rangle Dh$.
		Let $(x_0,\xi_0)$ be a maximum point of $W$ in $\overline\Omega\times {\mathbb S}^{n-1}$. For $\alpha$ large, since 
		$W-v^{E,\alpha}_{\xi\xi}$ are $O(\alpha^{-1})$ by preceding gradient estimate, we may
		assume
		\[
		v^{E,\alpha}_{\xi_0\xi_0}(x_0)\geq\alpha^{-1},
		\]
		otherwise we have $W(x_0, \xi_0) \le C\alpha^{-1}$. On the other hand, 
		\begin{align}
        \label{estimate-LvEa}
		|\Delta v^{E,\alpha}|
		=|\lambda^\alpha v^{E,\alpha}+f^\alpha|
		\leq C\alpha^{-1}.  
		\end{align}
		For an orthonormal basis $\{e_i\}_{i=1}^n$, the maximality at $(x_0, \xi_0)$ gives
		\[
		v^{E,\alpha}_{ii}(x_0)
		\leq v^{E,\alpha}_{\xi_0\xi_0}(x_0)+C\alpha^{-1}.
		\]
		As before, using the trace bound \eqref{estimate-LvEa}, we can claim
		\begin{equation}\label{eq:eigen-hessian-comparison}
		|D^2v^{E,\alpha}(x_0)|
		\leq C_n v^{E,\alpha}_{\xi_0\xi_0}(x_0)
		+C\alpha^{-1}.
		\end{equation}

		\textbf{Case 1: $x_0\in \Omega$.} By
		Lemma~\ref{lem:eigen-c1} and \eqref{estimate-LvEa}, 
		\[
		\begin{aligned}
		0\geq\Delta W(x_0,\xi_0)
		&\geq |D^2w(x_0)|^2
		-C|D^2w(x_0)| -C|D^2v^{E,\alpha}(x_0)|-C.
		\end{aligned}
		\]
		As for Lemma~\ref{lem:torsion-hessian-reduction}, this calculation can be
		justified by local mollification to claim that the coefficients $C$ are controlled by $C^{3,1}$ bound of $\partial\Omega$.
		The identity
		\[
		D^2w=D^2u^1-\alpha D^2v^{E,\alpha}
		\]
		and the uniform bound \eqref{eq:eigen-u1-c2} allow to have
		\[
		0\geq-C+\frac{\alpha^2}{4}
		|D^2v^{E,\alpha}(x_0)|^2,
		\]
		hence $|D^2v^{E,\alpha}(x_0)|\leq C\alpha^{-1}$.

		\textbf{Case 2: $x_0\in\partial \Omega$ and $\xi_0$ is non-tangential at $x_0$.}
		Write
		\[
		\xi_0=a\tau+b\nu(x_0),\quad a=\sqrt{1-b^2} < 1,
		\]
		with $\tau \in {\mathbb S}^{n-1}$ tangent to $\partial \Omega$ at $x_0$. Differentiating
		$\partial_\nu v^{E,\alpha}=w$ in the $\tau$ direction gives
		\[
		v^{E,\alpha}_{\nu\tau}
		+v^{E,\alpha}_k\nu_\tau^k=w_\tau.
		\]
		Using this identity and $W(x_0,\tau)\leq W(x_0,\xi_0)$, we can proceed exactly as for Lemma \ref{lem:torsion-hessian-reduction} and conclude
		\[
		W(x_0,\xi_0)
		\leq C\bigl(\alpha^{-1}+
		\max_{\partial\Omega}|v^{E,\alpha}_{\nu\nu}|\bigr).
		\]

		\textbf{Case 3: $x_0\in\partial \Omega$ and $\xi_0$ is tangential at $x_0$.}
		As before, we assume that $x_0=0$, $\xi_0=e_1$, $\nu(x_0)=-e_n$, and write
		the boundary locally as $x_n=\rho(x')$. Differentiating
		$Dv^{E,\alpha}\cdot\nu=w$ twice in the $x_1$ direction, and
		using $w=u^1-\alpha v^{E,\alpha}$, yields
		\[
		v^{E,\alpha}_{n11}
		=\alpha v^{E,\alpha}_{11}
		+O\bigl(1+|D^2v^{E,\alpha}|\bigr),
		\]
		where $C$ depends on the $C^3$ bound of $\partial\Omega$ because the
		calculation contains $D_\tau^2\nu$. Since 
		\[
		0 \ge W_n(x_0,e_1)
		=v^{E,\alpha}_{11n}
		+O\bigl(|Dv^{E,\alpha}|+|Dw|\bigr)+ \sum_{k=1}^n w_kw_{kn}.
		\]
		Using Lemma~\ref{lem:eigen-c1},
		$D^2w=D^2u^1-\alpha D^2v^{E,\alpha}$, and
		\eqref{eq:eigen-hessian-comparison}, we obtain, as for Lemma \ref{lem:torsion-hessian-reduction},
		\[
		0\geq W_n(x_0,e_1)
		\geq(\alpha-C)v^{E,\alpha}_{11}(x_0)-C.
		\]
		This yields, in any case $v^{E,\alpha}_{11}(x_0)\leq C\alpha^{-1}$.

		All three cases give an upper bound for every directional second
		derivative. The trace bound then gives the corresponding lower
		bound, and the stated second-order derivative estimate follows.
	\end{proof}

	Finally, we estimate the double normal derivatives of $v^{E,\alpha}$ on the boundary.
	\begin{lemma}[Eigenfunction double normal estimate]
	\label{lem:eigen-double-normal}
		There is
		$C>0$, depending only on the $C^{3,1}$ bound for
		$\partial\Omega$, $\lambda^D$, and $g_D^{-1}$, such that
		\[
		\max_{\partial\Omega}|v^{E,\alpha}_{\nu\nu}|
		\leq C\alpha^{-1}
		\quad \mbox{for all $\alpha$ sufficiently large}.
		\]
	\end{lemma}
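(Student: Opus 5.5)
We follow the barrier argument of Lemma~\ref{lem:torsion-double-normal}, adapting it to the inhomogeneous equation \eqref{eq:eigen-difference}. Set $M:=\max_{\partial\Omega}|v^{E,\alpha}_{\nu\nu}|$ and
\[
\varphi:=\langle Dv^{E,\alpha},Dh\rangle+w .
\]
Since $Dh=-\nu$ and $w=\partial_\nu v^{E,\alpha}$ on $\partial\Omega$, we have $\varphi=0$ on $\partial\Omega$. Since $w$ is harmonic by \eqref{eq:eigen-w}, a direct computation gives
\[
\Delta\varphi=2\langle D^2v^{E,\alpha},D^2h\rangle+\langle Dv^{E,\alpha},D\Delta h\rangle+\langle D\Delta v^{E,\alpha},Dh\rangle .
\]
In the last term we substitute $\Delta v^{E,\alpha}=-\lambda^\alpha v^{E,\alpha}-f^\alpha$. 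This gives
\[
\langle D\Delta v^{E,\alpha},Dh\rangle=-\lambda^\alpha\langle Dv^{E,\alpha},Dh\rangle-(\lambda^\alpha-\lambda^D)\langle Du^{E,D},Dh\rangle .
\]
By Lemma~\ref{lem:eigen-c1}, \eqref{eq:eigenvalue-decay} and the $C^1$ bound for $u^{E,D}$, this term is $O(\alpha^{-1})$. Lemma~\ref{lem:eigen-hessian-reduction} then yields
\[
|\Delta\varphi|\le C_1(M+\alpha^{-1})\quad\text{in }\Omega_{\mu_*}.
\]
Lemma~\ref{lem:eigen-c1} together with $\|w\|_{L^\infty}\le C\alpha^{-1}$ gives $|\varphi|\le C_1\alpha^{-1}$. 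The bound $|\partial_\nu w|=|\partial_\nu u^1-\alpha w|\le C_1$ on $\partial\Omega$ is not good enough as it stands, so we need a sharper estimate for $\partial_\nu w$, discussed below.

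Granting $|\partial_\nu w|\le C_1\alpha^{-1}$ on $\partial\Omega$, the rest is identical to the torsion case. Use the same $C_0$, $\mu_0$ and $g=h-\frac{C_0}{2}h^2$, and argue by contradiction, assuming $M>\frac{16C_1}{3\mu_0}\alpha^{-1}$. The comparison functions $P=\pm\varphi-\frac M4 g$ vanish on $\partial\Omega$ and are negative on $\{h=\mu_0\}$. They cannot have a positive interior maximum, because $\Delta P\ge (C_1+\frac14)M-C_1\alpha^{-1}>0$ there. Hence $P\le 0$ in $\overline\Omega_{\mu_0}$.

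At the boundary point realizing $M$ we use $D^2h\,\nu=0$ on $\partial\Omega$, from \eqref{eq:distance-normal-hessian}, to get $\partial_\nu\varphi=-v^{E,\alpha}_{\nu\nu}+\partial_\nu w$. The Hopf-type inequality $\partial_\nu P\ge 0$ then gives $\frac34 M\le C_1\alpha^{-1}$, which is the desired contradiction.

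The main obstacle is the bound on $\partial_\nu w$. On $\partial\Omega$ we have $\partial_\nu w=\widetilde w$, where $\widetilde w=\widetilde u-\alpha w$ is as in the proof of Lemma~\ref{lem:eigen-c1}. That proof already showed $\|\widetilde w\|_{L^\infty(\Omega)}\le C\alpha^{-1}$ by the boundary maximum argument, using $\|D\widetilde u\|_{L^\infty}\le C$. This step rests on the uniform bounds $\|F^\alpha\|_{C^1}\le C$ with $F^\alpha=\alpha(\lambda^\alpha v^{E,\alpha}+f^\alpha)$, which follow from Lemmas~\ref{lem:eigen-c0} and \ref{lem:eigen-c1} together with \eqref{eq:eigenvalue-decay}. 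They give a $C^{2,\beta}$ bound for $u^1$ and hence $C^{1,\beta}$ control of $\widetilde u$. So $|\partial_\nu w|\le C\alpha^{-1}$ on $\partial\Omega$, and I expect this to be the only point requiring care beyond the torsion argument. The constants depend only on the $C^{3,1}$ bound of $\partial\Omega$, on $\lambda^D$ and on $g_D^{-1}$, through the earlier lemmas.
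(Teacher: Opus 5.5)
Your proposal is correct and is essentially the paper's proof. It uses the same $\varphi=\langle Dv^{E,\alpha},Dh\rangle+w$, the same bounds $|\varphi|\le C\alpha^{-1}$ and $|\Delta\varphi|\le C(M+\alpha^{-1})$ (including the extra term from $\Delta v^{E,\alpha}=-\lambda^\alpha v^{E,\alpha}-f^\alpha$), the same barrier $g=h-\frac{C_0}{2}h^2$ with $P=\pm\varphi-\frac M4 g$, and the same boundary inequality $\partial_\nu P\ge 0$. The point you call the main obstacle is immediate: $|\partial_\nu w|\le\|Dw\|_{L^\infty(\Omega)}\le C_{E,1.2}\alpha^{-1}$ by Lemma~\ref{lem:eigen-c1}, which is what the paper uses, so your detour through $\partial_\nu w=\widetilde w$ on $\partial\Omega$ is valid but unnecessary.
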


	\begin{proof}
		Use the fixed strip $\Omega_{\mu_*}$ and the function $h$
		introduced before. Set
		\[
		M_E:=\max_{\partial\Omega}|v^{E,\alpha}_{\nu\nu}|,
		\quad
		\varphi:=\langle Dv^{E,\alpha},Dh\rangle+w.
		\]
		We have $\varphi=0$ on
		$\partial\Omega$, since $Dh=-\nu$ and $w=\partial_\nu v^{E,\alpha}$. Moreover,
		\[
		\Delta\varphi
		=-\langle\lambda^\alpha Dv^{E,\alpha}+Df^\alpha,Dh\rangle
		+2\langle D^2v^{E,\alpha}, D^2h\rangle 
		+ \langle Dv^{E,\alpha}, D(\Delta h)\rangle.
		\]
		By \eqref{eq:distance-normal-hessian}, $Dh_\nu =0$ on
		$\partial\Omega$. The preceding estimate for $w$, together with Lemmas~\ref{lem:eigen-l2},
		\ref{lem:eigen-c1} and \ref{lem:eigen-hessian-reduction}, gives
		\begin{equation}\label{eq:eigen-phi-distance-bounds}
		|\varphi|\leq C_2\alpha^{-1},
		\quad
		|\Delta\varphi|\leq C_2\left(M_E+\alpha^{-1}\right)
		\quad\text{in }\Omega_{\mu_*},
		\end{equation}
		and $|\partial_\nu w|\leq C_2\alpha^{-1}$ on $\partial\Omega$ with a uniform constant $C_2$. Fix now
		\[
		C_0\geq
		\|\Delta d\|_{L^\infty(\Omega_{\mu_*})}+8C_2+1,
		\quad
		\mu_0:=\min\left\{\mu_*,\frac{1}{2C_0}\right\}.
		\]
		Considering $g = h-\frac{C_0}{2}h^2$ in $\Omega_{\mu_0}$, proceeding exactly as for Lemma \ref{lem:torsion-double-normal} (with auxiliary functions $P_E = \pm\varphi-\frac{M_E}{4}g$), we arrive at
		\[
		M_E\leq\frac{16C_2}{3\mu_0}\alpha^{-1}.
		\]
        So we omit more details.
	\end{proof}

	Clearly, Lemmas \ref{lem:eigen-c0}--\ref{lem:eigen-double-normal} yield $\|v^{E,\alpha}\|_{C^2(\overline\Omega)}\leq C\alpha^{-1}$, hence the proof of Theorem \ref{thm:eigen-decay} is completed. \qed

\section{From decay estimates to concavity}\label{sec:concavity}
\reset

By the decay estimates obtained in Sections~2 and~3,
\begin{equation}\label{eq:C2-decay-both}
\|v^{T,\alpha}\|_{C^2(\overline\Omega)}
+
\|v^{E,\alpha}\|_{C^2(\overline\Omega)}
\leq C\alpha^{-1}.
\end{equation}

Once having \eqref{eq:C2-decay-both},
we see that $-\sqrt{u^{T,\alpha}}$ and $-\log u^{E,\alpha}$ are strictly convex in a fixed boundary strip.  The corresponding result was proved by Crasta and Fragal\`a \cite[Proposition~4.3]{CrastaFragala2021}, using an argument inspired by Korevaar \cite{Korevaar1983}. For completeness, we present the proof.
\begin{lemma}[Strict convexity near the boundary]
\label{prop:boundary-convexity}
Assume that $\Omega$ is uniformly convex of class $C^{3,1}$.
There exist $\mu_1>0$ and $\alpha_0>0$ such that, for every
$\alpha\geq\alpha_0$,
\begin{align}
\label{convexity-boundary}
D^2\bigl(-\sqrt{u^{T,\alpha}}\bigr)>0,\quad
D^2\bigl(-\log u^{E,\alpha}\bigr)>0
\quad\text{in }\Omega_{\mu_1}.
\end{align}
\end{lemma}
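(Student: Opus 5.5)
We compute the Hessians directly and use the $C^2$ decay \eqref{eq:C2-decay-both} to reduce to the Robin boundary behaviour near $\partial\Omega$. Write $u=u^{T,\alpha}$ (resp. $u^{E,\alpha}$). Then
\[
D^2\bigl(-\sqrt{u}\bigr)=\frac{1}{4u^{3/2}}\bigl(Du\otimes Du-2uD^2u\bigr),\qquad
D^2\bigl(-\log u\bigr)=\frac{1}{u^{2}}\bigl(Du\otimes Du-uD^2u\bigr).
\]
So it suffices to show that the matrix $A:=Du\otimes Du-c\,uD^2u$ ($c=2$ or $1$) is positive definite in a fixed strip $\Omega_{\mu_1}$, uniformly for $\alpha\ge\alpha_0$. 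Positivity of $u$ on $\overline\Omega$ for large $\alpha$ follows from $u=u^D+O(\alpha^{-1})$ and $u^D\ge c_0 h$, together with $u=\alpha^{-1}\partial_{-\nu}u$ on $\partial\Omega$, where $\partial_{-\nu}u=q+O(\alpha^{-1})\ge p/2>0$.

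The plan is to compare $A$ with the analogous Dirichlet object. Fix $x$ in the strip, let $\nu$ denote the direction $-Dh(x)$, and decompose $\xi=a\tau+b\nu$ with $\tau\perp\nu$. By \eqref{eq:C2-decay-both}, $Du=Du^D+O(\alpha^{-1})$ and $D^2u=D^2u^D+O(\alpha^{-1})$. Also $|Du^D|\ge p/2$ in $\Omega_{\mu}$ for small $\mu$ (Hopf), and $Du^D$ is nearly parallel to $\nu$, so $\langle Du,\xi\rangle^2\ge (p^2/8)b^2-C(\mu+\alpha^{-1})a^2$. For the tangential block, recall that on $\partial\Omega$ we have $u^D_{\tau\tau}=-\kappa_\tau q\le-\kappa_0 p$. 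By continuity of $D^2u^D$ (which is $C^{1,\beta}$ up to the boundary since $\partial\Omega\in C^{3,1}$), $u^D_{\tau\tau}\le-\kappa_0p/2$ in $\Omega_\mu$, hence $u_{\tau\tau}\le-\kappa_0p/4$ for large $\alpha$. This gives
\[
\langle A\xi,\xi\rangle\ \ge\ \tfrac{p^2}{8}b^2+c\,u\,\tfrac{\kappa_0p}{4}a^2-C\,u\,|a||b|-C\,u\,b^2-C(\mu+\alpha^{-1})a^2\cdot(\text{gradient cross term}).
\]

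The main obstacle is the cross terms. Near the boundary $u$ is small, of order $h+\alpha^{-1}$, so the tangential gain $u\kappa_0 a^2$ degenerates. The gradient term must therefore be controlled more sharply: the tangential gradient satisfies $\langle Du,\tau\rangle^2\ge0$, and I will only need $(\langle Du,\xi\rangle)^2\ge(b\,\partial_\nu u+a\,\partial_\tau u)^2$ with no negative $a^2$ term at all. The mixed term $-2c\,u\,ab\,u_{\tau\nu}$ is then absorbed by Cauchy–Schwarz:
\[
2c\,u|ab||u_{\tau\nu}|\le \varepsilon b^2+\frac{C u^2}{\varepsilon}a^2\le \varepsilon b^2+ \frac{c\kappa_0p}{8}u\,a^2,
\]
provided $u\le \varepsilon c\kappa_0p/(8C)$. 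This holds in a thin strip $\Omega_{\mu_1}$ and for $\alpha\ge\alpha_0$, since $u\le C(h+\alpha^{-1})$. The term $-c\,u\,b^2u_{\nu\nu}$ is similarly $O(u)b^2$, which is small. The cross term $2ab\,\partial_\nu u\,\partial_\tau u$ inside the square is harmless because the square is nonnegative.

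To get strict positivity, I keep a fraction of $b^2(\partial_\nu u)^2\ge b^2p^2/4$ after writing $(b\,u_\nu+a\,u_\tau)^2\ge \tfrac12 b^2u_\nu^2-a^2u_\tau^2$. Then $|u_\tau|$ must be $O(\sqrt{u})$-small relative to the tangential gain. For this I use $u^D_\tau=0$ on $\partial\Omega$, so $|u^D_\tau|\le Ch$, and hence $|u_\tau|\le C(h+\alpha^{-1})\le Cu$. Thus $a^2u_\tau^2\le Cu^2a^2$, which is absorbed like the mixed term. Collecting the estimates yields $\langle A\xi,\xi\rangle\ge \tfrac{p^2}{16}b^2+\tfrac{c\kappa_0p}{16}u\,a^2>0$ in $\Omega_{\mu_1}$. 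The constants $\mu_1$ and $\alpha_0$ depend only on $p_T$ or $p_E$, $\kappa_0$, and the $C^2$ bounds, which are controlled via \eqref{eq:C2-decay-both} and the $C^{3,1}$ bound of $\partial\Omega$.
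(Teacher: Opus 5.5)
Your argument is correct, but it is organized differently from the paper's proof.

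\textbf{The paper's route.} The paper fixes an angle threshold $\epsilon_0$ and splits the directions $\eta$ into two cases.
\begin{itemize}
\item For nearly tangential $\eta$, i.e.\ $|\langle\eta,\nu(y_0)\rangle|\le\epsilon_0$, the Hessian term alone gives $-\langle D^2u\,\eta,\eta\rangle\ge$ a fixed positive constant. The mixed and normal Hessian contributions are absorbed because they are $O(\epsilon_0)$ in absolute size, and the nonnegative square $\langle Du,\eta\rangle^2$ is simply discarded.
\item For transversal $\eta$, one has $|\langle Du,\eta\rangle|\ge c\,\epsilon_0$. Then the gradient term, of order $u^{-3/2}$ (resp.\ $u^{-2}$), beats the Hessian term, of order $u^{-1/2}$ (resp.\ $u^{-1}$), once $u\le C(\mu_1+\alpha^{-1})$ is small.
\end{itemize}
In each case a single term dominates, so the paper needs only positivity and an \emph{upper} bound for $u$ in the strip.

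\textbf{Your route.} You prove one anisotropic inequality $\langle A\xi,\xi\rangle\ge \frac{p^2}{16}b^2+\frac{c\kappa_0 p}{16}u\,a^2$ valid for all directions at once. Because the tangential gain carries the degenerate factor $u$, every error term in $a^2$ must be $o(u)$. This is why you need $|u_\tau|\le C(h+\alpha^{-1})\le Cu$.

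\textbf{What you should add.} The second inequality in $|u_\tau|\le C(h+\alpha^{-1})\le Cu$ requires the two-sided comparison $u\ge c(h+\alpha^{-1})$ in the strip. Your positivity paragraph only gestures at this, so state it explicitly. It does hold:
\begin{itemize}
\item on $\partial\Omega$, $u=\alpha^{-1}\partial_{-\nu}u\ge p/(2\alpha)$;
\item along the inward normal segment, $\partial_{-\nu}u\ge p/2$ by $C^1$ closeness to $u^D$ and continuity of $Du^D$;
\item integrating along that segment gives $u\ge \frac{p}{2}\bigl(h+\alpha^{-1}\bigr)$.
\end{itemize}
Also drop your first displayed lower bound for $\langle A\xi,\xi\rangle$, the one with the ``gradient cross term'' factor. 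It is garbled, and your later splitting $(bu_\nu+au_\tau)^2\ge\frac12 b^2u_\nu^2-a^2u_\tau^2$ supersedes it.

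\textbf{What each buys.} The paper's case split is shorter and uses less information about $u$. Your unified estimate yields quantitative information the paper does not state. The smallest eigenvalue of $D^2(-\sqrt u)$ is bounded below by a multiple of $u^{-3/2}$ in the normal direction and $u^{-1/2}$ tangentially. For $D^2(-\log u)$ the corresponding bounds are $u^{-2}$ and $u^{-1}$.
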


\begin{proof}
We first consider $u^{T,\alpha}$. For any
$y_0\in\partial\Omega$, choose principal coordinates centered at
$y_0$ such that
\[
x_n=\rho(x'),
\quad
\rho(0)=0,
\quad
\nabla\rho(0)=0,
\quad
\rho_{ij}(0)=\kappa_i\delta_{ij},
\quad
1\leq i,j\leq n-1.
\]
Since $u^{T,D}=0$ on $\partial\Omega$, there hold $u^{T,D}_{ij}(y_0)
=
-|Du^{T,D}(y_0)|\kappa_i\delta_{ij}$ for $1\leq i,j\leq n-1$. 
By the uniform convexity of $\Omega$ and the Hopf lemma, there exists
$\tau_0>0$ such that
\[
\inf_{y\in\partial\Omega}
\inf_{\xi\in\mathbb S^{n-1}\cap T_y\partial\Omega}
\left\langle
-D^2u^{T,D}(y)\xi,\xi
\right\rangle
\geq 8\tau_0.
\]
By continuity, after choosing $\mu_1>0$ sufficiently small, for every
$x_0\in\Omega_{\mu_1}$ and the nearest point
$y_0\in\partial\Omega$ to $x_0$,
\[
\inf_{\xi\in\mathbb S^{n-1}\cap T_{y_0}\partial\Omega}
\left\langle
-D^2u^{T,D}(x_0)\xi,\xi
\right\rangle
\geq 4\tau_0.
\]
By \eqref{eq:C2-decay-both}, increasing $\alpha_0$ if necessary, we have, for all $\alpha\geq\alpha_0$,
\begin{equation}\label{eq:tangential-negative}
\inf_{\xi\in\mathbb S^{n-1}\cap T_{y_0}\partial\Omega}
\left\langle
-D^2u^{T,\alpha}(x_0)\xi,\xi
\right\rangle
\geq 2\tau_0.
\end{equation}

Consider now $\eta\in\mathbb S^{n-1}$. We distinguish two cases.

\textbf{Case 1:}
$
|\langle\eta,\nu(y_0)\rangle|\leq\epsilon_0.
$ Here $\epsilon_0$ is a positive small constant independent of $x_0$, to be precised below. 

Decompose $\eta$ as follows
\[
\eta=a\eta'+b\nu(y_0),
\quad
\eta'\in\mathbb S^{n-1}\cap T_{y_0}\partial\Omega,
\quad
a=\sqrt{1-b^2}.
\]
Since $\|D^2u^{T,\alpha}\|_{L^\infty(\Omega)}\leq C$ uniformly for
$\alpha$ large,
\[
\left|
\left\langle D^2u^{T,\alpha}(x_0)\eta,\eta\right\rangle
-
\left\langle D^2u^{T,\alpha}(x_0)\eta',\eta'\right\rangle
\right|
\leq C\epsilon_0.
\]
Choosing $\epsilon_0>0$ sufficiently small and using
\eqref{eq:tangential-negative}, we obtain
\[
-\left\langle
D^2u^{T,\alpha}(x_0)\eta,\eta
\right\rangle
\geq \tau_0.
\]
Hence
\[
\begin{aligned}
\left\langle
D^2\bigl(-\sqrt{u^{T,\alpha}}\bigr)(x_0)\eta,\eta
\right\rangle
&=
-\frac{
\left\langle D^2u^{T,\alpha}(x_0)\eta,\eta\right\rangle}
{2\sqrt{u^{T,\alpha}(x_0)}}\
+\ \frac{
|\langle Du^{T,\alpha}(x_0),\eta\rangle|^2}
{4\sqrt{u^{T,\alpha}(x_0)^3}}\\
&\geq
\frac{\tau_0}
{2\sqrt{u^{T,\alpha}(x_0)}}>0.
\end{aligned}
\]

\textbf{Case 2:}
$
|\langle\eta,\nu(y_0)\rangle|>\epsilon_0.
$
By the Dirichlet boundary condition and Hopf Lemma, we have $\min_{\partial\Omega}|\partial_\nu u^{T,D}| = C_0 > 0$. Therefore
\[
|\langle Du^{T,D}(y_0),\eta\rangle| = |\langle\eta,\nu(y_0)\rangle| \times |\partial_\nu u^{T,D}(y_0)| 
\geq
C_0\epsilon_0.
\]
Using \eqref{eq:C2-decay-both} and decreasing $\mu_1$ and increasing
$\alpha_0$ if necessary, we obtain, for $\alpha$ large,
\begin{equation*}
|\langle Du^{T,\alpha}(x_0),\eta\rangle|
\geq
\frac{C_0\epsilon_0}{2}.
\end{equation*}
Consequently,
\begin{align}
\label{Case2}
\left\langle
D^2\bigl(-\sqrt{u^{T,\alpha}}\bigr)(x_0)\eta,\eta
\right\rangle
&\geq
-\frac{C}{2\sqrt{u^{T,\alpha}(x_0)}}
+\frac{
\epsilon_0^2C_0^2}
{16\sqrt{u^{T,\alpha}(x_0))^3}}.
\end{align}
Since $u^{T,D}=0$ on $\partial\Omega$, $u^{T,\alpha}(x_0)
\leq
\|Du^{T,D}\|_{L^\infty(\Omega)}
h(x_0)
+
C\alpha^{-1}
\leq
C\mu_1+C\alpha^{-1}.
$
Thus, after decreasing $\mu_1$ and increasing $\alpha_0$ once more,
the right hand side of \eqref{Case2} becomes positive. 

\smallskip
As $\eta \in \mathbb{S}^{n-1}$ is arbitrary, the first claim in \eqref{convexity-boundary} holds true for $\mu_1 > 0$ small enough.

The eigenfunction is treated in the same way, we will go through quickly. Since
$u^{E,D}=0$ on $\partial\Omega$, uniform convexity and the Hopf lemma
give, after possibly decreasing $\mu_1$ and increasing $\alpha_0$, there are $\epsilon_0, c_0 \in (0, 1)$ such that 
\[
\begin{cases}
-\left\langle
D^2u^{E,\alpha}(x_0)\eta,\eta
\right\rangle
\geq c_0, & \mbox{ whenever }\; |\langle\eta,\nu(y_0)\rangle|\leq\epsilon_0;\\
|\langle Du^{E,\alpha}(x_0),\eta\rangle|\geq c_0, & \mbox{ whenever }\; |\langle\eta,\nu(y_0)\rangle|>\epsilon_0.\end{cases}
\]
Since
\[
\left\langle
D^2\bigl(-\log u^{E,\alpha}\bigr)\eta,\eta
\right\rangle
=
-\frac{
\left\langle D^2u^{E,\alpha}\eta,\eta\right\rangle}
{u^{E,\alpha}}
+
\frac{
|\langle Du^{E,\alpha},\eta\rangle|^2}
{(u^{E,\alpha})^2},
\]
the first case gives positivity directly. In the second case,
\[
\left\langle
D^2\bigl(-\log u^{E,\alpha}\bigr)(x_0)\eta,\eta
\right\rangle
\geq
-\frac{C}{u^{E,\alpha}(x_0)}
+
\frac{c_0^2}{u^{E,\alpha}(x_0)^2}.
\]
Moreover, $u^{E,\alpha}(x_0)
\leq
\|Du^{E,D}\|_{L^\infty(\Omega)}
d(x_0,\partial\Omega)
+
C\alpha^{-1}
\leq
C\mu_1+C\alpha^{-1}.$
Therefore, after decreasing $\mu_1$ and increasing $\alpha_0$ if
necessary, we get the second claim in \eqref{convexity-boundary}.
\end{proof}

\begin{proof}[Proof of Theorems~\ref{thm:torsion-concavity}
and~\ref{thm:eigen-concavity}]
Set
\[
K:=\overline{\Omega\setminus\Omega_{\mu_1}}.
\]
Then $K\subset\subset\Omega$. The Dirichlet concavity results imply
\[
D^2\bigl(-\sqrt{u^{T,D}}\bigr)>0,
\quad
D^2\bigl(-\log u^{E,D}\bigr)>0
\quad\text{in }\Omega.
\]
Hence, by compactness of $K$, there exists $c_0>0$ such that
\[
D^2\bigl(-\sqrt{u^{T,D}}\bigr)\geq c_0 I,
\quad
D^2\bigl(-\log u^{E,D}\bigr)\geq c_0 I
\quad\text{on }K.
\]
Moreover,
\[
\min_K u^{T,D}>c_1,
\quad
\min_K u^{E,D}>c_1.
\]
Therefore, by \eqref{eq:C2-decay-both}, for all sufficiently large $\alpha$
\[
D^2\bigl(-\sqrt{u^{T,\alpha}}\bigr)>0,
\quad
D^2\bigl(-\log u^{E,\alpha}\bigr)>0
\quad\text{on }K.
\]

Combining this with Lemma~\ref{prop:boundary-convexity}, we are done.
\end{proof}

\bigskip
\noindent
{\bf Acknowledgements.} Both authors are partially supported by Science and Technology Commission of Shanghai Municipality (No.~22DZ2229014). D.Y. is also supported by NSFC (No.~12671132).

\medskip
\noindent{\bf Data availability.}
Data sharing is not applicable as no new data were created or analyzed in the study.

\medskip
\noindent{\bf Conflict of interest statement.}
The authors declare that there is no conflict of interest relevant to
this article.

	\bibliographystyle{acm}
	\bibliography{Ye_Zhang2026}
	\enlargethispage{4\baselineskip}

\end{document}